\newtheorem{theo}{Theorem}[section]
\newtheorem{lemma}{Lemma}[section]
\newtheorem{prop}{Proposition}[section]
\newtheorem{rem}{Remark}[section]
\newtheorem{cor}{Corollary}[section]
\newcommand{\R}{\mathbb{R}}
\newcommand{\C}{\mathbb{C}}
\newcommand{\N}{\mathbb{N}}
\title{The constraint equations in the presence of a scalar field - the case of the conformal method with volumetric drift}
\author{Caterina V\^alcu\thanks{Universit\'e Claude Bernard Lyon 1, 43 Boulevard du 11 Novembre 1918, 69100 Villeurbanne, \texttt{valcu@math.univ-lyon1.fr}}}
\begin{document}
\maketitle
\begin{abstract}
In this paper we establish the existence in low dimensions of solutions to the constraint equations in the case of the conformal system recently proposed by David Maxwell \cite{Max14}, with the added presence of a scalar field and under suitable smallness assumptions on its parameters.
\end{abstract}

\section{Introduction}
The field of general relativity deals with the study of spacetime, an object defined as the equivalence class, up to an isometry, of Lorentzian manifolds $(\tilde{M},\tilde{g})$ of dimension $n+1$ satisfying the Einstein field equations
$$Ric_{\alpha\beta}(\tilde{g})-\frac{1}{2}R(\tilde{g})\tilde{g}_{\alpha\beta}=8\pi T_{\alpha\beta},$$
$\alpha,\beta=\overline{1,n+1}$. Here, $R(\tilde{g})$ is the scalar curvature of $\tilde{g}$, $Ric$ the Ricci curvature and $T_{\alpha\beta}$ the stress-energy tensor describing the presence of matter and energy. For example, $T_{\alpha\beta}=0$ describes vacuum. Our interest focuses on the more general case
$$T_{\alpha\beta}=\nabla_\alpha\tilde\psi\nabla_\beta\tilde\psi-\left(\frac{1}{2}|\nabla\tilde\psi|^2_{\tilde{g}}+V(\tilde\psi)\right)\tilde{g}_{\alpha\beta},$$
which models the existence within the spacetime of a scalar field $\tilde\psi\in\mathcal{C}^\infty(M)$ having potential $V\in\mathcal{C}^\infty(\R)$. Thus, $\tilde\psi=0$ and $V=\Lambda$ yield the vacuum with cosmological constant $\Lambda$, while $V=\frac{1}{2}m\tilde\psi^2$ corresponds to the Einstein-Klein-Gordon setting.
\par For a globally hyperbolic spacetime, we define its initial data $(M,\hat{g},\hat{K},\hat{\psi},\hat{\pi})$. They consist of an $n$-dimensional Riemannian manifold $(M,\hat{g})$, which models the spacetime at a particular moment in time, a symmetric 2-tensor $\hat{K}$, corresponding to its second fundamental form, the scalar field $\hat{\psi}$ in $M$, and its temporal derivative $\hat{\pi}$. The associated spacetime development takes the form $(M\times \R,\tilde{g}, \tilde{\psi})$, where $\tilde{g}$ is a Lorentzian metric that verifies $\tilde{g}|_{M}=\hat{g}$ and $\tilde\psi$ is a scalar field such that $\tilde{\psi}|_{M}=\hat{\psi}$ and $\partial_t\tilde{\psi}|_{M}=\hat{\pi}$.
\par Initial data in general relativity may not be freely specified, unlike their Newtonian counterparts. Instead, they must verify the Gauss and Codazzi equations,
\begin{equation*}
\begin{cases}
\displaystyle R(\hat{g})+(tr_{\hat{g}}\hat{K})^2-||\hat{K}||^2_{\hat{g}} &\displaystyle= \hat{\pi}^2+|\hat{\nabla}\hat{\psi}|^2_g+2V(\hat{\psi})\\
\partial_i(tr_{\hat{g}} \hat{K})-\hat{K}_{i,j}^j&\displaystyle= \hat{\pi}\partial_i\hat{\psi},
\end{cases}
\end{equation*}
which are referred to as the \textit{constraint equations}. The work of Choquet-Bruhat \cite{Cho} establishes, once and for all, that the constraint equations are not only necessary but sufficient conditions for the (local) existence of a solution. Later, Choquet-Bruhat and Geroch \cite{ChoGer} prove that the maximal development of initial data is unique, up to an isometry. Globally hyperbolic spacetimes may rigorously be studied in the context of mathematical analysis as the result of an evolution problem. The above system is clearly under-determined, which allows for considerable freedom in choosing a solution $(\hat{g},\hat{K},\hat{\psi},\hat{\pi})$.
\par Using the conformal method introduced by Lichnerowicz \cite{Lic}, the constraint equations may be transformed into a determined system of equations by fixing well-chosen quantities (see Choquet-Bruhat, Isenberg and Pollack \cite{ChoIsePol}). The appeal of such a method lies in that it provides a characterisation of the resulting initial data by fixed quantities. Essentially, it maps a space of parameters to the space of solutions.
\par Given an initial data set $(\hat{g},\hat{K},\hat{\psi},\hat{\pi})$, the classical choice of parameters is $(\mathbf{g},\mathbf{U},\tau,\psi,\pi;\alpha)$: in this case, the conformal class $\mathbf{g}$ is represented by a Riemannian metric $g$, the smooth function  $\tau=\hat{g}^{ab}\hat{K}_{ab}$ is a mean curvature and the conformal momentum $\mathbf{U}$ measured by a volume form $\alpha$ (volume gauge) is a 2-tensor that is both trace-free and divergence-free with respect to $g$ (a transverse-traceless tensor). We sometimes prefer to indicate the volume gauge by the densitized lapse 
$$\tilde{N}_{g,\alpha}:=\frac{\alpha}{dV_g}.$$
Note that this quantity depends on the choice of representative $g$, unlike the volume gauge $\alpha$ which does not.
The standard conformal method implicitly fixes $\tilde{N}_{g,\alpha}=2$; in the present paper, we prefer to make use of the freedom of choosing $\tilde{N}_{g,\alpha}$ as needed. We often refer to a parameter set by indicating the representative metric $g$ and the corresponding densitized lapse $\tilde{N}_{g,\alpha}$ instead of giving the conformal class and volume gauge. However, these quantities can immediately be reconstructed from our data. We refer to Maxwell \cite{Max14} for an introduction to the conformal method in our context.
\par Starting from the parameter set $(g,U,\tau,\psi,\pi;\tilde{N})$, the corresponding (physical) initial data is pinpointed by solving a resulting system, comprising the Lichnerowicz-type equation and the momentum constraints, for a smooth positive function (or conformal factor) $u$ and a smooth vector field $W$ in $M$, 
\begin{equation}
\begin{cases}\label{syst classical conformal method}
\displaystyle \Delta_g u+\mathcal{R}_\psi=&\displaystyle-\mathcal{B}_{\tau,\psi,V}u^{q-1}+\frac{\mathcal{A}_{\pi,U}(W)}{u^{q+1}},\\ 
\Delta_{g, conf}W=&\displaystyle \frac{n-1}{n}u^{q}\nabla\tau+\pi\nabla\psi,
\end{cases}
\end{equation}
where
\begin{equation*}
\begin{array}{l}
\displaystyle \mathcal{R}_\psi=\frac{n-2}{4(n-1)} \left(R(g)-|\nabla\psi|^2_g\right),\\  
\displaystyle \mathcal{B}_{\tau,\psi,V}=\frac{n-2}{4(n-1)}\left(\frac{n-1}{n}\tau^2-2V(\psi)\right),\\ 
\displaystyle \mathcal{A}_{\pi,U}(W)=\frac{n-2}{4(n-1)}\left(|U+\mathcal{L}_gW|^2_g+\pi^2\right).
\end{array}
\end{equation*}
If $(u,W)$ solves the above system, then the initial data we've been searching for are
$$\hat{g}=u^{q-2}g,\quad \hat{K}=u^{-2}\left(U+\frac{\tilde{N}}{2}\mathcal{L}_g W\right)+\frac{\tau}{n}\hat{g},\quad \hat{\psi}=\psi,\quad\hat{\pi}=u^{-q}\pi.$$
Note also that the solutions generated by $(g,U,\tau;\tilde{N})$ and $$(\varphi^{q-2}g,\varphi^{-2}U,\tau,\psi,\varphi^{-q}\pi; \varphi^q \tilde{N})$$ are the same, where $\varphi$ is a smooth positive function. The notations above are similar to those of Choquet-Bruhat, Isenberg and Pollack (\cite{ChoIsePol}, \cite{ChoIsePol2}). The following quantities often appear throughout the present paper:  $q=\frac{2n}{n-2}$ is the critical Sobolev exponent for the embedding of $H^1$ in Lebesgue spaces, $\Delta_g=-div_g\nabla$ denotes the Laplace-Beltrami operator taken with non-negative eigenvalues, $\Delta_{g,conf}W=div_g(\mathcal{L}_g W)$ is the Lam\'e operator and $\mathcal{L}_g$ is the conformal Killing operator with respect to $g$,
$$\mathcal{L}_g W_{ij}=W_{i,j}+W_{j,i}-\frac{2}{n}div_g W g_{ij}.$$
Conformal Killing fields are defined as vector fields in the kernel of $\mathcal{L}_g.$
\par The conformal method is particularly successful in finding solutions when the mean curvature $\tau$ is constant as the system (\ref{syst classical conformal method}) becomes uncoupled, but it is unclear how well the method functions when the mean curvature is far from being constant: see Maxwell \cite{Max11} and \cite{Max14c}, where a given set of parameters point to no or to an infinite number of solutions. We emphasize that any failing of the system does not necessarily translate to a singularity in the space of solutions to the constraints system, but may instead derive from a poor choice of mapping. This motivates the study of variations to standard conformal methods.
\par The drift method introduced by Maxwell replaces the mean curvature $\tau$ as a parameter by a pair $(\tau^*,\tilde{V})$, where $\tau^*$ is a unique constant called volumetric momentum and $\tilde{V}$ a vector field related to the drift. They verify an analogue of York splitting, namely
\begin{equation}\label{volumetric momentum}
\tau=\tau^*+\tilde{N}_{\hat{g},\alpha}div_{\hat{g}}\tilde{V}=\tau^*+\frac{\tilde{N}_{g,\alpha}}{u^{2q}}div_g(u^q\tilde{V}),
\end{equation}
the notation $\tilde{V}$ being specific to this paper in order to avoid confusion with the potential $V$. Interestingly, $\tau^*=0$ holds true for all counterexamples found by Maxwell (\cite{Max11}, \cite{Max14c}). This suggests that the volumetric momentum may play an important role in characterizing the space of initial data. Ideally, we would like to know as soon as we fix a set of parameters $(g,U,\tau,\psi,\pi;\tilde{N})$ if we find ourselves in the case $\tau^*=0$. However, $\tau^*$ cannot be directly calculated by (\ref{volumetric momentum}) without first solving (\ref{syst classical conformal method}), which somewhat defeats the purpose. This motivates a new choice of parameters, even at the risk of working with an analytically more complicated system. The idea of Maxwell (\cite{Max11}, \cite{Max14c}) is thus to choose $\tau^*$ as an additional parameter that is to be fixed in the place of $\tau$: the hope is thus to avoid the aforementioned problem. As well  as $\tau^*$, Maxwell added $\tilde{N}$ and $\tilde{V}$ as parameters, for geometric and physical reasons. Therefore, instead of fixing $\tau$, we fix $\tau^*$, $\tilde{N}$ and $\tilde{V}$.
\par Intuitively, the drift is a geometric quantity describing infinitesimal motion in the space of metrics modulo the group of diffeomorphisms connected to the identity such that the conformal class and volume are preserved. For any given drift, the choice of a representative vector field $\tilde{V}$ is unique up to conformal Killing fields and vector fields which are divergence-free with respect to the initial metric $\hat{g}$. Given $g$ an arbitrary representative of the conformal class, it is not clear whether two vector fields are indicative of the same drift class defined for $\hat{g}$; this problem is discussed at length in the paper of Mike Holst, David Maxwell and Rafe Mazzeo \cite{HolMaxMaz} for conformal systems where the critical non-linearity is non-focusing, or negative. Our analysis treats systems with focusing (that is to say positive) non-linearities stemming from the presence of a scalar field with positive potential. 
\par The following system corresponds to Problem 12.1 of \cite{Max14} in the presence of a scalar field, where $g$ admits no non-trivial conformal Killing fields:
\begin{equation}\label{syst of Maxwell}
	\begin{cases}
	\Delta_g u+\frac{n-2}{4(n-1)}(R(g)-|\nabla\psi|_g^2)u -\frac{(n-2)|U+\mathcal{L}_g W|^2+\pi^2}{4(n-1)u^{q+1}}\\ 
	\displaystyle\quad-\frac{n-2}{4(n-1)}\left[2V(\psi)-\frac{n-1}{n}\left(\tau^*+\frac{\tilde{N}div_g(u^q\tilde{V})}{u^{2q}}\right)^2\right]u^{q-1}=0\\ \\
	 div_g\left(\frac{\tilde{N}}{2}\mathcal{L}_g W\right)-\frac{n-1}{n}u^q \mathbf{d}	\left(\frac{\tilde{N}div_g(u^{q}\tilde{V})}{2u^{2q}}\right)-\pi\nabla\psi=0.
	\end{cases}
	\end{equation}
We denote the exterior derivative by $\mathbf{d}$. The unknowns are a smooth positive scalar function $u$ defined on $M$ and a smooth vector field $W$ on $M$. The parameters are $(g,U,\tau^*,\tilde{V},\psi,\pi;\tilde{N})$. Maxwell's new set of parameters include $\tau^*$, which could not be calculated \textit{a priori} in the classical method. The initial data of the constraint equations verify
\begin{equation}
\begin{array}{c}
\hat{g}=u^{q-2}g,\quad \hat{K}=u^{-2}\left(U+\frac{\tilde{N}}{2}\mathcal{L}_g W\right)+\frac{1}{n}\left(\tau^*+\frac{\tilde{N}}{u^{2q}}div(u^q\tilde{V})\right)\hat{g},\\ \hat{\psi}=\psi,\quad\hat{\pi}=u^{-q}\pi.
\end{array}
\end{equation}
\par The following is a more general system than (\ref{syst of Maxwell}). The central result of the paper consists in showing that it admits solutions.  Let $(M,g)$ be a closed Riemannian manifold of dimension $n\in\{3,4,5\}$, and $g$ has no non-trivial conformal Killing fields. Let $b$, $c$, $d$, $f$, $h$, $\rho_1$, $\rho_2$, $\rho_3$ be smooth functions on $M$ and let $Y$ and $\Psi$ be smooth vector fields defined on $M$. Let $0<\gamma<1$. Assume that $\Delta_g+h$ is coercive, in the sense that its first eigenvalue is positive. Assume that $f>0$, $\rho_1>0$ and $|\nabla \rho_3|<(2C_1)^{-1}$, where $C_1$ is a dimensional constant - see (\ref{lxest}). Consider the system
\begin{equation}\label{systemshort}
\begin{cases}
\displaystyle \Delta_g u+hu
    &\displaystyle= fu^{q-1}+\frac{\rho_1+|\Psi+\rho_2\mathcal{L}_gW|^2_g}{u^{q+1}}\\ 
    &\displaystyle -\frac{b}{u}-c\langle\nabla u,Y\rangle \left(\frac{d}{u^2}+\frac{1}{u^{q+2}}\right)-\frac{\langle\nabla u,Y\rangle^2}{u^{q+3}}\\
    \displaystyle div_g\left(\rho_3\mathcal{L}_g W\right) 
    &\displaystyle =\mathcal{R}(u).
\end{cases}
\end{equation}
Here $\mathcal{R}$ is an operator verifying
$$\mathcal{R}(u)\leq C_\mathcal{R}\left(1+\frac{||u||_{\mathcal{C}^2}^2}{(\inf_Mu)^2}\right)$$
for a constant $C_\mathcal{R}>0$.
\par A supersolution of the Lichnerowicz-type equation is a smooth function $u$ verifying that
$$
\begin{array}{r l}
\Delta_g u+hu\geq &fu^{q-1}+\frac{\rho_1+|\Psi+\rho_2\mathcal{L}_gW|^2_g}{u^{q+1}} -\frac{b}{u}\\&
-c\langle\nabla u,Y\rangle \left(\frac{d}{u^2}+\frac{1}{u^{q+2}}\right)-\frac{\langle\nabla u,Y\rangle^2}{u^{q+3}}.
\end{array}
$$
Similarly, a subsolution satisfies an inequality of opposite sign. Whenever the inequality is strict, we say $u$ is a strict subsolution or a strict supersolution respectively.
\par We fix 
\begin{equation}\label{th main}
\theta=\min(\inf_M\rho_1,\inf_M f),
\end{equation}
and 
\begin{equation}\label{T main}
T=\max(||f||_{\mathcal{C}^{1,\gamma}}, ||\rho_1||_{\mathcal{C}^{0,\gamma}}, ||c||_{\mathcal{C}^{0,\gamma}}, ||d||_{\mathcal{C}^{0,\gamma}}, ||h||_{\mathcal{C}^{0,\gamma}}).
\end{equation} 
\par Here is the main result of our paper:
\begin{theo}\label{thm 1}
There exists a constant $C=C(n,h)$, $C>0$ such that if $\rho_1$ verifies
\begin{equation}
||\rho_1||_{L^1(M)}\leq C(n,h)\left(\max_M |f|\right)^{1-n},
\end{equation}
and there exists a constant $$\delta=\delta(\theta, T )>0$$ such that, if
\begin{equation}
||b||_{\mathcal{C}^{0,\gamma}}+ ||Y||_{\mathcal{C}^{0,\gamma}}+ ||\Psi||_{\mathcal{C}^{0,\gamma}}+||\rho_2||_{\mathcal{C}^{0,\gamma}}+ C_\mathcal{R}\leq \delta,
\end{equation}
then the system (\ref{systemshort}) admits a solution.
\end{theo}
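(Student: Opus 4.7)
My plan is to find $(u,W)$ solving (\ref{systemshort}) by a Schauder fixed-point argument built around the sub- and super-solution method. First I would construct a subsolution $u_-$ and a supersolution $u_+$ of the full Lichnerowicz equation, uniformly with respect to $W$ in a bounded set, and then iterate on the convex subset
\[\mathcal{S}=\{(u,W)\in\mathcal{C}^{2,\gamma}(M)\times\mathcal{C}^{2,\gamma}(M,TM) : u_-\leq u\leq u_+,\ \|W\|_{\mathcal{C}^{2,\gamma}}\leq M\}\]
via a map $\Phi:(u,W)\mapsto(\tilde u,\tilde W)$, where $\tilde W$ is the unique solution of the momentum equation with right-hand side $\mathcal{R}(u)$ and $\tilde u$ is produced from the Lichnerowicz equation with $W$ and the gradient-dependent drift terms frozen at the previous iterate. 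A fixed point of $\Phi$ solves (\ref{systemshort}); compactness of $\Phi$ on $\mathcal{S}$ will follow from standard $\mathcal{C}^{2,\gamma}$ elliptic regularity.

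The core difficulty is the construction of $u_\pm$. Because $\rho_1\geq\theta>0$ and the drift contributions vanish on constants, any small enough constant $u_-=\varepsilon$ serves as a subsolution uniformly in $W$ provided $\|b\|_{\mathcal{C}^0}\leq\delta$: the term $\rho_1/\varepsilon^{q+1}$ then dominates $h\varepsilon+b/\varepsilon$. A constant supersolution is impossible in general since $fu^{q-1}$ grows faster than $hu$, so one must build a variable $u_+$. Focusing on the principal equation $\Delta_g u+hu\geq fu^{q-1}+\rho_1/u^{q+1}$, a pointwise balance shows that a constant supersolution would exist at $x$ iff $\rho_1(x)\leq c_n h(x)^n/f(x)^{n-1}$, and the integrated form of this balance is precisely the hypothesis $\|\rho_1\|_{L^1}\leq C(n,h)(\max|f|)^{1-n}$; the strategy is to promote the pointwise balance to a variable supersolution via the positive Green's function of the coercive operator $\Delta_g+h$, redistributing the $L^1$ mass of $\rho_1$ over $M$. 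Once $u_+$ is produced, the additional positive term $|\Psi+\rho_2\mathcal{L}_gW|^2_g/u_+^{q+1}$ is absorbed by smallness of $\|\Psi\|_{\mathcal{C}^{0,\gamma}}$ and $\|\rho_2\|_{\mathcal{C}^{0,\gamma}}$, the quadratic drift $-\langle\nabla u_+,Y\rangle^2/u_+^{q+3}$ has the favorable sign for the supersolution, and the linear drift is absorbed by shrinking $\|Y\|_{\mathcal{C}^{0,\gamma}}$.

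For the momentum equation, the absence of conformal Killing fields together with $|\nabla\rho_3|<(2C_1)^{-1}$ makes $W\mapsto\mathrm{div}_g(\rho_3\mathcal{L}_gW)$ a coercive isomorphism on $\mathcal{C}^{2,\gamma}$, and Schauder estimates give $\|\tilde W\|_{\mathcal{C}^{2,\gamma}}\leq CC_\mathcal{R}(1+\|u\|_{\mathcal{C}^2}^2/(\inf u)^2)$, so that $C_\mathcal{R}\leq\delta$ small enough keeps $\tilde W$ inside the prescribed ball once $u_\pm$ are known. The main obstacle, in my view, is the compatibility of constants: $u_+$ is built from the principal equation with $\Psi,\rho_2,Y,b$ and $W$ switched off, but after reinstating these perturbations — especially the gradient-quadratic term $-\langle\nabla u,Y\rangle^2/u^{q+3}$, which behaves like a first-order quasi-linearity and spoils the purely pointwise nature of sub/supersolution comparison — one must verify that $u_\pm$ remain sub/supersolutions for every $(u,W)\in\mathcal{S}$ produced by the iteration. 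Making this rigorous forces $\delta=\delta(\theta,T)$ to be chosen \emph{after} $u_\pm$ and $M$ are fixed, and requires $\mathcal{C}^1$-control on the iterates coming from the a priori $\mathcal{C}^{2,\gamma}$ bounds; tracking these dependences, and adapting the sub/supersolution method so that it applies to a genuinely quasi-linear Lichnerowicz equation, is where I expect most of the technical work to lie.
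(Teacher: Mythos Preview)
Your plan has a genuine gap: the convex set $\mathcal{S}$ constrains $u$ only through the pointwise barriers $u_-\le u\le u_+$, while the Lichnerowicz equation carries the genuinely quasi-linear term $\langle\nabla u,Y\rangle^2/u^{q+3}$. For $u\in\mathcal{S}$ this term is \emph{not} bounded, so you cannot show that $\Phi(\mathcal{S})\subset\mathcal{S}$, nor can you bootstrap $\mathcal{C}^{2,\gamma}$ regularity of $\tilde u$ from elliptic theory (the right-hand side contains $|\nabla u|^2$, and $L^\infty$ bounds on $u$ give you nothing). You acknowledge needing ``$\mathcal{C}^1$-control on the iterates'', but never say where it would come from; standard elliptic estimates are circular here.

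In the paper this is precisely the heart of the matter. The fixed-point argument is run on a ball in $\mathcal{C}^2$ (not on an order interval $[u_-,u_+]$), and the fact that the solution map lands back in that ball is the content of an independent a priori estimate (Theorem~\ref{theo stability eq}): any positive solution of the scalar equation with coefficients in $\mathcal{E}_{\theta,T}$ satisfies $\|u\|_{\mathcal{C}^2}\le S_{\theta,T}$. That estimate is obtained by a full blow-up analysis (pointwise estimates near concentration points, sharp asymptotics, and a Pohozaev identity) and is exactly where the dimensional restriction $n\in\{3,4,5\}$ enters --- a hypothesis your proposal never invokes. The supersolution from Hebey--Pacard--Pollack is used only to guarantee that the minimal-solution construction of Section~2 applies, not to trap the iterates. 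Finally, continuity of the fixed-point map is not automatic: the paper needs the minimal solution to be \emph{strictly} stable ($\lambda_0>0$), which is established separately (Lemma~\ref{lem strong stability}) under further smallness of $b$ and $Y$. Your outline addresses none of these three ingredients; without them the Schauder argument does not close.
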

\begin{rem} For a slightly more detailed expression of the smallness assumptions, see Section \ref{theo: system}. The constant $\displaystyle C(n,h)=\frac{C(n)}{S_h^{n-1}}$ appears explicitly in a paper by Hebey, Pacard and Pollack (\cite{HebPacPol08}, Corollary 3.1). By $S_h$ we understand the Sobolev constant which is defined as the smallest constant $S_h>0$ such that
\begin{equation*}
\int_M |v|^q\,dv_g\leq S_h\left(\int_M(|\nabla v|^2+hv^2)\,dv_g\right)^\frac{q}{2}
\end{equation*}
for all $v\in H^1(M)$.
\end{rem}
The following corollary deals with the existence of solutions to the conformal system. It suffices to take 
\begin{equation*}
\begin{cases}
h=\frac{n-2}{4(n-1)}\left(\mathcal{R}_g-|\nabla\psi|^2_g\right),\quad f=\frac{n-2}{4(n-1)}\left[2V(\psi)-\frac{n-1}{n}(\tau^*)^2\right],\\
\rho_1=\frac{n-2}{4(n-1)}\left(\pi-\frac{n-1}{n}(\tilde{N})^2div_g(\tilde{V})\right),\quad \rho_2=\sqrt{\frac{n-2}{4(n-1)}}\frac{\tilde{N}}{2}, \quad \Psi=\sqrt{\frac{n-2}{4(n-1)}}U,\\
b=\frac{n-2}{2n}\tau^*\tilde{N}div_g(\tilde{V}),\quad c=2\sqrt{\frac{n-2}{4n}},\quad d=\tau^*\\
Y=\sqrt{\frac{n}{n-2}}\tilde{N}\tilde{V},\quad \rho_3=\ln\tilde{N},\\
\mathcal{R}=\frac{n-1}{n}div_g(\tilde{V})\nabla\ln\tilde{N}+\frac{n-1}{n}\nabla(div_g(\tilde{V}))+\frac{\pi\delta_i\psi}{\tilde{N}}\\\quad+2\langle\tilde{V},\frac{\nabla u}{u}\rangle\nabla\ln\tilde{N}-2\frac{n-1}{n+1}\frac{\langle\tilde{V},\nabla u\rangle\nabla u}{u^2}-\frac{n-1}{n}\langle\tilde{V},\frac{\Delta_g u}{u}\rangle
\end{cases}
\end{equation*}
in (\ref{systemshort}). It is a direct application of Theorem \ref{thm 1}.
\begin{cor}\label{physical result} Let $\Delta_g + \frac{n-2}{4(n-1)}\left(\mathcal{R}_g-|\nabla\psi|^2_g\right)$ be a coercive operator. Assume that 
\begin{equation}
2V(\psi)>\frac{n-1}{n}(\tau^*)^2,\quad \pi>\frac{n-1}{n}(\tilde{N})^2div_g(\tilde{V})\quad\text{and}\quad|\nabla \ln\tilde{N}|<C_1^{-1},
\end{equation}
where $C_1$ depends on $n$ and $g$ (see (\ref{lxest}) for more details). Moreover, assume that 
\begin{equation}
||\pi-\frac{n-1}{n}(\tilde{N})^2div_g(\tilde{V})||_{L^1}\leq C(n,g,h)||2V(\psi)-\frac{n-1}{n}(\tau^*)^2||^{1-n}_{L^\infty}.
\end{equation}
Then there exists a constant
\begin{equation}
\begin{array}{c}
\delta=\delta\Big(\inf_M\frac{n-2}{4(n-1)}\left[2V(\psi)-\frac{n-1}{n}(\tau^*)^2\right],
\inf_M\frac{n-2}{4(n-1)}\left(\pi-\frac{n-1}{n}(\tilde{N})^2div_g(\tilde{V})\right)\\
\tau^*,||\pi||_{\mathcal{C}^{0,\gamma}},
||\mathcal{R}_g-|\nabla\psi|^2_g||_{\mathcal{C}^{0;\gamma}},||2V(\psi)||_{\mathcal{C}^{1,\gamma}}\Big)>0
\end{array}
\end{equation}
such that, if
\begin{equation}
||U||_{\mathcal{C}^{0,\gamma}}+||\pi||_{\mathcal{C}^{0,\gamma}}+||\nabla\psi||_{\mathcal{C}^{0,\gamma}}+||\ln \tilde N||_{\mathcal{C}^{1,\gamma}}+||\tilde{V}||_{\mathcal{C}^{1,\gamma}}\leq \delta,
\end{equation}
then (\ref{syst of Maxwell}) admits a solution $(u,W)$, where $u$ is a smooth positive function on $M$ and $W$ a smooth vector field on $M$.
\end{cor}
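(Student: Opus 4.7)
The plan is to verify that Corollary \ref{physical result} is a direct instance of Theorem \ref{thm 1} under the substitutions listed just above its statement. This requires two tasks: an algebraic verification that the substituted form of (\ref{systemshort}) coincides with (\ref{syst of Maxwell}), and a functional verification that the hypotheses of Theorem \ref{thm 1} follow from those of the corollary.

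For the algebraic step, in the Lichnerowicz-type equation I would expand the square $(\tau^*+\tilde N u^{-2q}\mathrm{div}_g(u^q\tilde V))^2$ using the Leibniz identity $\mathrm{div}_g(u^q\tilde V)=u^q\,\mathrm{div}_g\tilde V+qu^{q-1}\langle\nabla u,\tilde V\rangle$ and multiply by $u^{q-1}$. The $(\tau^*)^2$ piece is absorbed into $fu^{q-1}$; the pure-divergence part of the cross term produces $b/u$; the remaining cross term and the squared divergence produce the three gradient corrections $cd\langle\nabla u,Y\rangle/u^2$, $c\langle\nabla u,Y\rangle/u^{q+2}$ and $\langle\nabla u,Y\rangle^2/u^{q+3}$, together with the $\rho_1/u^{q+1}$ contribution. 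The structural constants hidden in $Y$, $c$ and $d$ are chosen precisely so that all coefficients match. For the momentum equation, expanding $\mathrm{div}_g(\rho_3\mathcal L_g W)$ with $\nabla\rho_3=\nabla\ln\tilde N$ and comparing with the exterior-derivative form in (\ref{syst of Maxwell}) leaves exactly the prescribed $\mathcal R(u)$.

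For the functional step, coercivity of $\Delta_g+h$, positivity of $f$ and of $\rho_1$, and the $L^1$-control of $\rho_1$ are literal translations of the corollary's hypotheses, and the pointwise bound $|\nabla\rho_3|<(2C_1)^{-1}$ follows from $|\nabla\ln\tilde N|<C_1^{-1}$ together with the additional smallness $||\ln\tilde N||_{\mathcal C^{1,\gamma}}\le\delta$. The smallness of $||b||_{\mathcal C^{0,\gamma}}$, $||Y||_{\mathcal C^{0,\gamma}}$, $||\Psi||_{\mathcal C^{0,\gamma}}$, $||\rho_2||_{\mathcal C^{0,\gamma}}$ follows from H\"older product estimates because each of $b, Y, \Psi, \rho_2$ is a product of the small data $\tilde V, \tilde N, U$ with structural coefficients controlled by $\tau^*$ and $||\pi||_{\mathcal C^{0,\gamma}}$, both of which are absorbed into the definition of $\delta$.

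The main obstacle is showing that $C_\mathcal R$ is small. I would split the six terms of $\mathcal R(u)$ according to whether they contribute to the constant or to the $||u||_{\mathcal C^2}^2/(\inf_M u)^2$ part of the growth estimate. The first three terms, $\frac{n-1}{n}\mathrm{div}_g(\tilde V)\nabla\ln\tilde N$, $\frac{n-1}{n}\nabla(\mathrm{div}_g\tilde V)$ and $\pi\nabla\psi/\tilde N$, contribute to the constant with coefficient linear in $||\tilde V||_{\mathcal C^{1,\gamma}}$, $||\ln\tilde N||_{\mathcal C^{1,\gamma}}$, $||\pi||_{\mathcal C^{0,\gamma}}$ and $||\nabla\psi||_{\mathcal C^{0,\gamma}}$. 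The last three terms, $2\langle\tilde V,\nabla u/u\rangle\nabla\ln\tilde N$, $2\frac{n-1}{n+1}\langle\tilde V,\nabla u\rangle\nabla u/u^2$ and $\frac{n-1}{n}\langle\tilde V,\Delta_g u/u\rangle$, contribute to the $u$-dependent part with coefficient proportional to $||\tilde V||_{\mathcal C^0}(1+||\ln\tilde N||_{\mathcal C^1})$. Both families are therefore controlled by the small parameters of the corollary, forcing $C_\mathcal R\le\delta$ after shrinking $\delta$ if necessary. All hypotheses of Theorem \ref{thm 1} are then in force, and its conclusion furnishes a solution of (\ref{systemshort}), which by the algebraic step above is precisely a solution of (\ref{syst of Mafwell}).
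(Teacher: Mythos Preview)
Your proposal is correct and follows exactly the approach the paper takes: the paper does not give a separate proof of Corollary~\ref{physical result} but simply states that it ``is a direct application of Theorem~\ref{thm 1}'' under the listed substitutions, and you have carried out precisely that verification in detail. Your breakdown of the algebraic matching and of the smallness of $C_{\mathcal R}$ is more explicit than anything the paper writes, but it is the intended argument; note only the typo ``Mafwell'' in your last line.
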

\bigskip
A few remarks on the results of the present paper. The classical system of constraint equations obtained by the conformal method (without the modifications proposed by Maxwell \cite{Max14}) was studied by Bruno Premoselli (\cite{Pre14}, \cite{Pre14a}) in the presence of a scalar field. Second, the above system is the subject of a paper by Mike Holst, David Maxwell and Rafe Mazzeo \cite{HolMaxMaz} - in their case, certain conditions are imposed on the presence of the matter field. We treat the separate and delicate case wherein the dominant non linearity is focusing and leads to possible loss of compactness. It is interesting to note that the size of $n$ plays a role; as Premoselli proves in his paper, while his system may be well-behaved in low dimensions ($3\leq n\leq 5$), it most certainly fails to do so in higher dimensions $(n\geq 6)$. Even if our results are similar to those of Premoselli, they are considerably more difficult to obtain. This is mainly due to the presence of a $|\nabla u|^2$ term in the scalar equation, a term which is not compact a priori.
\medskip
\par \textbf{Outline of the paper}. 
Section 2 is devoted to the study of the first equation in (\ref{systemshort}), the so-called Lichnerowicz equation. We prove the existence of stable solutions under suitable assumptions.
\par Section 3 deals with a priori estimates for solutions of the Lichnerowicz equation. A careful blow-up analysis is carried out. As already mentioned, the term $|\nabla u|^2$ poses additional difficulty: blow-up can occur at the $\mathcal{C}^1$ level, even if the solution is bounded in $L^\infty$. 
\par Section 4 is devoted to the proof of Theorem \ref{thm 1} and Corollary \ref{physical result}, which relies heavily on the a priori estimates obtained in Section 3. At the end of Section 4, we also explain how to extend Corollary \ref{physical result} in the presence of conformal Killing vector fields. 
\medskip
\par \textbf{Aknowledgements.} It is a pleasure to express my sincere gratitude to Olivier Druet for many helpful discussions and suggestions.
\section{Existence of minimal solutions of the scalar equation}
We study the Lichnerowicz-type scalar equation in (\ref{systemshort}). The following theorem states that, given the existence of supersolutions, one may use an iterative procedure to obtain a sequence which converges in $\mathcal{C}^1$ norm to a solution. We draw the reader's attention to the fact that this solution is uniquely determined by its construction. The proof contains some similarities with that of Premoselli \cite{Pre14}, but some new difficulties appear. The main difference here comes from the presence of non-linearities containing gradient terms, which force us to further refine the analysis. These gradient terms lead to difficulties in obtaining a priori estimates on solutions of the equation, which in turn lead to problems of stability. The existence result we prove in this section reads as follows:
\begin{theo}\label{theo: existence of sol to Lich type eq}
Let $(M,g)$ be a closed Riemannian manifold. Let $a$, $b$, $c$, $d$, $f$, $h$ be smooth functions on $M$ and $Y$ be a smooth vector field on $M$. Assume that $a>0$ and $f>0$. The equation
\begin{equation}\label{EM}
    \Delta_g u+ hu-fu^{q-1}-\frac{a}{u^{q+1}}+\frac{b}{u}+\frac{\langle\nabla u, Y\rangle^2}{u^{q+3}}+c\langle\nabla u,Y\rangle\left(\frac{d}{u^2}+\frac{1}{u^{q+2}}\right)=0
\end{equation}
admits a smooth positive solution $u$ as soon as it admits a supersolution.
\end{theo}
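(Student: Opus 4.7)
The plan is to implement a monotone iteration scheme starting from the supersolution $u_+$, in the spirit of Premoselli \cite{Pre14}, with extra care needed to handle the gradient nonlinearities. First I would exhibit a positive constant subsolution $u_-\equiv\varepsilon$: for a constant, the gradient terms vanish and the subsolution inequality reduces to
\[h\varepsilon \le f\varepsilon^{q-1}+\frac{a}{\varepsilon^{q+1}}-\frac{b}{\varepsilon}.\]
Since $a>0$ and $q+1>1$, the right-hand side blows up like $\varepsilon^{-(q+1)}$ as $\varepsilon\to 0^+$, so every sufficiently small positive constant is a strict subsolution, and shrinking $\varepsilon$ further I may arrange $u_-\le u_+$ pointwise on $M$.

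Next I fix $K>0$ large enough that the scalar map $t\mapsto Kt-ht+ft^{q-1}+at^{-(q+1)}-bt^{-1}$ is non-decreasing on the interval $[\varepsilon/2,\,2\sup u_+]$, set $u_0:=u_+$, and define $u_{n+1}$ inductively as the unique $C^{2,\alpha}$ solution of the linear coercive equation
\[(\Delta_g+K)u_{n+1}=Ku_n-hu_n+fu_n^{q-1}+\frac{a}{u_n^{q+1}}-\frac{b}{u_n}-\frac{\langle\nabla u_n,Y\rangle^2}{u_n^{q+3}}-c\langle\nabla u_n,Y\rangle\!\left(\frac{d}{u_n^{2}}+\frac{1}{u_n^{q+2}}\right).\]
Standard Fredholm theory produces $u_{n+1}$, and the maximum principle, combined with the choice of $K$ and the supersolution property propagated from $u_n$, is used inductively to prove $u_-\le u_{n+1}\le u_n\le u_+$. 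In the comparison step the term $\langle\nabla u,Y\rangle^2/u^{q+3}$ helps thanks to its favourable sign, while the sign-indefinite linear gradient term is absorbed by enlarging $K$ once more. The sequence $(u_n)$ thus decreases pointwise to a limit $u_\infty\ge\varepsilon$.

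The main obstacle is to promote this pointwise monotone convergence to $C^1$ convergence, so that the quasilinear term $\langle\nabla u_n,Y\rangle^2/u_n^{q+3}$ passes to the limit. Uniform $L^\infty$ bounds (automatic from $\varepsilon\le u_n\le u_+$) are not sufficient, because the equation has natural, i.e.\ quadratic, growth in $\nabla u$, and the associated regularity from $L^\infty$ to $W^{2,p}$ lies exactly at the borderline. I would obtain uniform $C^1$ estimates through a Bernstein-type argument applied to the linear elliptic equation satisfied by each $u_{n+1}$: a maximum principle applied to a weighted quantity of the form $\Phi(u_{n+1})|\nabla u_{n+1}|^2$ yields a bound of the form $\|\nabla u_{n+1}\|_\infty\le F(\|\nabla u_n\|_\infty)$ which, thanks to the extra room afforded by $K$ large, can be closed to a uniform estimate along the whole iteration. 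Alternatively, the theory of Ladyzhenskaya--Ural'tseva for quasilinear equations with natural growth would apply directly to each linear iterate.

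Once uniform $C^1$ bounds are in hand, the right-hand sides are uniformly bounded in $C^{0,\alpha}$, Schauder estimates give uniform $C^{2,\alpha}$ bounds on $u_n$, and Arzelà--Ascoli upgrades the pointwise limit to $C^2$ convergence along a subsequence; the full sequence converges by monotonicity. The limit $u_\infty$ then solves (\ref{EM}) in $C^{2,\alpha}$ with $u_\infty\ge\varepsilon>0$, and a standard elliptic bootstrap gives smoothness. The construction, being deterministic from the initial datum $u_+$, also yields the uniqueness property announced in the text preceding the theorem.
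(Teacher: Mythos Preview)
Your scheme has a genuine gap in the monotonicity step, and the gap is precisely where the gradient nonlinearities enter. With the fully explicit iteration you wrote, the right-hand side for $u_{n+1}$ carries $\nabla u_n$, while the one for $u_n$ carries $\nabla u_{n-1}$. To show $u_{n+1}\le u_n$ (or, equivalently, to propagate the supersolution property to $u_{n+1}$) you must compare, at a general point, quantities like $\langle\nabla u_n,Y\rangle^2/u_n^{q+3}$ and $\langle\nabla u_{n-1},Y\rangle^2/u_{n-1}^{q+3}$, and there is no relation between $\nabla u_n$ and $\nabla u_{n-1}$ there; neither the ``favourable sign'' nor enlarging $K$ helps, since $K$ only controls zero-order monotonicity in $t$, not first-order terms in two unrelated gradients. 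The same problem obstructs the lower bound $u_{n+1}\ge\varepsilon$: at a minimum of $u_{n+1}$ the right-hand side still contains the possibly very negative contribution $-\langle\nabla u_n,Y\rangle^2/u_n^{q+3}$, so positivity of $u_{n+1}$ is not guaranteed and the next iterate may not even be defined.

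The paper avoids this by a semi-implicit iteration: in the auxiliary equation $(E_i)$ the gradient factors are taken at the \emph{new} iterate $\nabla u_i$, while only the scalar coefficients $u_{i-1}^{-(q+3)}$, $u_{i-1}^{-2}$, $u_{i-1}^{-(q+2)}$ come from the previous step, and the scheme runs \emph{upward} from the constant subsolution. Each $(E_i)$ is then a genuine quasilinear equation with quadratic gradient growth, whose solvability (Step~1) is obtained by a separate fixed-point argument together with a Liouville-type lemma for bounded functions $v$ with $\Delta v+\theta(\partial_1 v)^2=0$ on $\R^n$. With this choice, at a maximum of $u_i-u_{i+1}$ one has $\nabla u_i=\nabla u_{i+1}$, so the gradient terms cancel in the comparison $u_i\le u_{i+1}$; and when checking that $u_{i+1}$ is a subsolution of $(E_{i+2})$, both sides carry the same gradient $\nabla u_{i+1}$ and only the scalar coefficients differ, which is exactly what the monotonicity (\ref{monotonicity of F}) handles. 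The uniform $\mathcal C^1$ bound (Step~3) is then obtained by a blow-up argument leading again to the same Liouville lemma, rather than by a Bernstein estimate; your proposed Bernstein/La\-dy\-zhen\-ska\-ya--Ural'tseva route would in any case only bound $\|\nabla u_{n+1}\|_\infty$ in terms of $\|\nabla u_n\|_\infty^2$, and the claim that large $K$ closes this quadratic recursion is not substantiated.
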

\begin{rem} The solution obtained by the construction below is unique. Moreover, it is stable (see Lemma \ref{lem weak stability} at the end of this section.)
\end{rem}
\begin{proof}[Proof of Theorem \ref{theo: existence of sol to Lich type eq}:]
We begin by fixing a supersolution and a subsolution to serve as upper and lower bounds respectively for the iterative process. Let $\psi$ be a positive supersolution of (\ref{EM}). Let $\varepsilon_0>0$ be a small constant such that 
$$\varepsilon_0<\inf_M\psi,\quad \left(\sup_M h\right)\varepsilon_0^{q+2}<\frac{\inf_M a}{2}\quad \text{and}\quad \left(\sup_M b\right)\varepsilon_0^{q}<\frac{\inf_M a}{2}.$$
The last two bounds ensure that $u_0=\varepsilon_0$ is a strict subsolution of (\ref{EM}) since $f>0$. 
We let
$$F(t,x)=-f(x)t^{q-1}-\frac{a(x)}{t^{q+1}}+\frac{b(x)}{t}-Kt$$
for $x\in M$ and $t>0$. We choose $K>0$ large enough such that
\begin{equation}
\label{monotonicity of F}
\frac{\partial}{\partial t}\left[F(t,x)+\frac{A(x)^2}{t^{q+3}}+c(x)A(x)\left(\frac{d(x)}{t^2}+\frac{1}{t^{q+2}}\right)\right]\leq 0
\end{equation}
for all $x\in M$ and all $\varepsilon_0\leq t\leq \sup_M\psi$, whatever $A(x)$ is. It is sufficient to take 
\begin{equation}
\label{K estimate}
\begin{array}{c}
K\geq \sup_{x\in M, \varepsilon_0\leq t\leq \sup_M\psi}\Big[-(q-1)f(x)t^{q-2}+(q+1)\frac{a(x)}{t^{q+2}}-\frac{b(x)}{t^2}\\
+\frac{c(x)^2\left(\frac{2d(x)}{t^3}+\frac{d+2}{t^{q+3}}\right)^2t^{q+4}}{4(q+3)}\Big].
\end{array}
\end{equation}
Up to choosing $K$ larger, we may also assume that $h+K>0$ and that $F(t,x)$ is negative for $\varepsilon_0\leq t\leq \sup_M\psi$. 
\par We shall now consider a sequence $(u_i)_{i\in\mathbb{N}}$ defined by induction by $u_0\equiv \varepsilon_0$ and
\begin{equation}\label{eq: the Ei eqs}
\begin{array}{c}
\displaystyle (E_i):\quad \Delta_g u_i+(h+K)u_i+F(u_{i-1}(x),x)+\frac{\langle\nabla u_i, Y\rangle^2}{u_{i-1}^{q+3}}\\ 
\displaystyle \quad +c\langle \nabla u_{i},Y\rangle\left(\frac{d}{u^2_{i-1}}+\frac{1}{u^{q+2}_{i-1}}\right)=0.
\end{array}
\end{equation}
\par We prove in Step 1 below that the sequence is well defined. In Step 2, we prove that the sequence if pointwise increasing and uniformly bounded. At last, Step 3 is devoted to the proof that the sequence $(u_i)$ converges to a solution of (\ref{eq: the Ei eqs}).
\par\textbf{Step 1:} We prove that $(u_i)$ is well defined. We consider the more general equation
\begin{equation}\label{gen eq}
\Delta_g u+Hu+\theta_1\langle\nabla u,Z\rangle^2+\theta_2\langle\nabla u,Z\rangle+\theta_3=0,
\end{equation}
with $H$, $\theta_1$, $\theta_2$, $\theta_3$ smooth functions on $M$ and $Z$ a smooth vector field on $M$ such that $\theta_1>0$, $H>0$, $\theta_3<0$. We claim that (\ref{gen eq}) admits a unique smooth positive solution.
\begin{proof}[Proof of Step 1:]
We shall use the fixed point theorem as stated in Evans \cite{Eva}, Section 9.2.2, Theorem 4. Let us define the operator $T:\mathcal{C}^{1,\gamma}(M)\to\mathcal{C}^{1,\gamma}(M)$ such that
$$\Delta_g T(u)+HT(u)+\theta_1\langle\nabla u,Z\rangle^2+\theta_2\langle\nabla u,Z\rangle+\theta_3=0.$$
 If we can prove that there exists $C>0$ such that
\begin{equation}\label{fixed point condition}
\forall\quad 0\leq \tau\leq 1,\quad w=\tau T(w)\quad\Rightarrow\quad ||w||_{\mathcal{C}^{1,\gamma}(M)}\leq C,
\end{equation}
then the operator $T$ will have a fixed point, leading to a solution of (\ref{gen eq}). Note that this solution will be unique. Indeed, assume that $w_1$ and $w_2$ are two solutions of (\ref{gen eq}), then at a point of maximum $x_0$ of $w_1-w_2$, we have that $\nabla w_1(x_0)=\nabla w_2(x_0)$ and $\Delta_g w_1(x_0)\geq \Delta_g w_2(x_0)$ so that (\ref{gen eq}) gives
$$H(x_0)\left(w_1(x_0)-w_2(x_0)\right)\leq 0.$$
Since $H>0$, this leads to $w_1\leq w_2$. By symmetry, uniqueness is proved. Note that the fixed point of $T$ is smooth and positive by the standard regularity theory and the maximum principle. 
\par Thus we are left with the proof of (\ref{fixed point condition}). Let $0\leq\sigma_m\leq 1$ and let $w_m\in\mathcal{C}^{1,\gamma}(M)$ be such that
$$w_m=\sigma_m T(w_m).$$
Multiplying (\ref{gen eq}) by $\sigma_m$, we obtain that
$$\Delta_g w_m+Hw_m+\sigma_m\theta_1\langle\nabla w_m,Z\rangle^2+\sigma_m\theta_2\langle\nabla w_m,Z\rangle+\sigma_m\theta_3=0.$$
First, the $L^\infty$ bounds on $w_m$ exist \textit{a priori}. Indeed, consider $x_0\in M$ a minimum of $w_m$. Since $\Delta_g w_m(x_0)\leq 0$ and $\nabla w_m(x_0)=0$, which holds true for all minima, then $Hw_m(x_0)\geq -\sigma_m \theta_3(x_0).$ By applying the same procedure to the study of maxima, we obtain that
\begin{equation}\label{a priori b}
\inf_M\frac{-\sigma_m\theta_3}{H}\leq w_m\leq \sup_M\frac{-\sigma_m\theta_3}{H}.
\end{equation}
Assume now that $||\nabla w_m||_{L^\infty(M)}\to\infty$. Let 
$$\mu_m:=\frac{1}{||\nabla w_m||_{L^\infty(M)}}\to 0\quad\text{ as }\quad m\to\infty,$$
and $(x_m)_m\subset M$ be such that
$$||\nabla w_m||_{L^\infty(M)}=|\nabla w_m(x_m)|.$$
Consider the domains $\displaystyle \Omega_m:=B_{
0}\left(\frac{i_g (M)}{2\mu_m}\right)$, where $i_g(M)$ is the injectivity radius of $M$, and the rescaled quantities 
$$v_m(x):=w_m\left(\exp_{x_m}(\mu_m x)\right)\quad\text{and}\quad g_m(x):=\left(\exp_{x_m}^*g\right)(\mu_m x).$$
Clearly, $||\nabla v_m||_{L^\infty}\leq 1$ and $|\nabla v_m(0)|=1$. The $L^\infty$ bounds remain unchanged. In $(\Omega_m)_{m\geq 1}$, we have that
\begin{equation*}
\begin{array}{c}
 \Delta_{g_m}v_m+\mu_m^2H\left(\exp_{x_m}(\mu_m\cdot)\right)v_m+\sigma_m\mu_m^2\theta_3\left(\exp_{x_m}(\mu_m\cdot)\right)\\ \\ 
+\sigma_m\theta_1\left(\exp_{x_m}(\mu_m\cdot)\right)\langle\nabla v_m,Z\left(\exp_{x_m}(\mu_m\cdot)\right)\rangle^2\\ \\
 +\mu_m\sigma_m\theta_2\left(\exp_{x_m}(\mu_m\cdot)\right)\langle\nabla v_m, Z\left(\exp_{x_m}(\mu_m\cdot)\right)\rangle=0
\end{array}
\end{equation*}
Note that $g_m\to\xi$ in $\mathcal{C}^2_{loc}(\R^n).$ By standard elliptic theory, $(v_m)_m$ is bounded in $\mathcal{C}^{1,\eta}_{loc}(\R^n)$, with $\eta\in (0,1)$. We may extract, up to a subsequence, $v_\infty=\lim_{m\to\infty}v_m$, $x_\infty=\lim_{m\to\infty}x_n$ and $\sigma_\infty:=\lim_{m\to\infty}\sigma_m$. From this is follows that $||\nabla v_\infty||_{L^\infty}=1$ and that the {\it  a priori } bounds (\ref{a priori b}) become
\begin{equation}\label{bounds on v infinity}
\inf_M\frac{-\sigma_{\infty}\theta_3}{H}\leq v_\infty\leq \sup_M\frac{-\sigma_{\infty}\theta_3}{H}.
\end{equation}
Moreover, $v_\infty$ solves the limit equation
$$\Delta v_\infty+\left(\partial_1 v_\infty\right)^2=0$$
in $\R^n$, where we have let
$$\partial_1 v_\infty:=\sqrt{\sigma_\infty\theta_1(0)}\nabla v_\infty\cdot Z(x_0).$$ 
If $\sigma_\infty=0$, then $v_\infty$ is a bounded harmonic function, and thus a constant. Let us assume that $\sigma_\infty\not = 0$. Note that, for $\alpha\in\R$,
\begin{equation*}
\begin{array}{r l}
\displaystyle \Delta v_\infty^{-\alpha} &\displaystyle =-\frac{\alpha}{v_\infty^{\alpha+1}}\Delta v_\infty-\frac{\alpha(\alpha+1)|\nabla v_\infty|^2}{v_\infty^{\alpha+2}}\\ 
&\displaystyle \leq\frac{\alpha|\nabla v_\infty|^2}{v_\infty^{\alpha+1}}\left(\sigma_\infty\theta_1(0)|Z(0)|^2-\frac{\alpha+1}{v_\infty}\right).
\end{array}
\end{equation*}
This and (\ref{bounds on v infinity}) imply that, for $\alpha$ sufficiently large, $v_\infty^{-\alpha}$ is subharmonic.  We then apply Lemma \ref{lem: A1} (see annex) to get that $v_\infty$ must be constant. Whichever the case, $\nabla v_\infty\equiv 0$ leads to a contradiction. The $\mathcal{C}^{1,\gamma}$ bound follows from an elliptic regularity argument. This ends the proof of Step 1.\end{proof} 
\par\textbf{Step 2: } We claim that
$$\varepsilon_0\leq u_i(x)\leq u_{i+1}(x)\leq \psi(x)$$
for all $x\in M$ and all $i\leq 0$.
\begin{proof}[Proof of Step 2:] We proceed by induction. We prove first that 
\begin{equation}\label{inductive hyp}
\forall\, i\geq 0,\quad u_i \text{ is a subsolution of } (E_{i+1}) \text{ and } u_i\leq \psi.
\end{equation}
Note that 
\begin{equation}\label{induction}
(\ref{inductive hyp}) \Rightarrow u_i\leq u_{i+1}.
\end{equation}
Indeed, let $x_0\in M$ be a maximum point of $u_i-u_{i+1}.$ Then $\nabla u_i(x_0)=\nabla u_{i+1}(x_0)$ and we can use the fact that $u_i$ is a subsolution of $(E_{i+1})$ and $u_{i+1}$ a solution of $(E_{i+1})$ to write that
$$\Delta_g(u_i-u_{i+1})(x_0)+(h(x_0)+K)(u_i-u_{i+1})(x_0)\leq 0$$
which implies that $u_i(x_0)\leq u_{i+1}(x_0)$ since $\Delta_g(u_i-u_{i+1})(x_0)\geq 0$ and $h+K>0$. This proves (\ref{induction}).
\par We now prove (\ref{inductive hyp}) by induction. For $i=0$, it follows from the choice of $\varepsilon_0$ we made. Assume that (\ref{inductive hyp}) holds for some $i\geq 0$. We need to prove that $u_{i+1}$ is a subsolution of $(E_{i+2})$. It suffices to show that
$$\begin{array}{c}
\Delta_g u_{i+2}+(h+K)u_{i+2}+F(u_{i+1}(x),x)+\frac{\langle\nabla u_{i+2}, Y\rangle^2}{u_{i+1}^{q+3}}\\ 
 +c\langle \nabla u_{i+2},Y\rangle\left(\frac{d}{u^2_{i+1}}+\frac{1}{u^{q+2}_{i+1}}\right)\leq \Delta_g u_{i+1}+(h+K)u_{i+1}+F(u_{i+1}(x),x)\\+\frac{\langle\nabla u_{i+1}, Y\rangle^2}{u_{i+1}^{q+3}} +c\langle \nabla u_{i+1},Y\rangle\left(\frac{d}{u^2_{i+1}}+\frac{1}{u^{q+2}_{i+1}}\right),
\end{array}
$$
since $u_{i+1}$ is defined as a solution of $(E_{i+1})$. This is equivalent to showing that
$$\begin{array}{c}
F(u_{i+1})+c\langle \nabla u_{i+1},Y\rangle\left(\frac{d}{u_{i+1}^2}+\frac{1}{u_{i+1}^{q+2}}\right)+\frac{\langle\nabla u_{i+1},Y\rangle^2}{u_{i+1}^{q+3}}\\ \leq F(u_{i})+c\langle\nabla u_{i+1},Y\rangle\left(\frac{d}{u_{i}^2}+\frac{1}{u_{i}^{q+2}}\right)+\frac{\langle\nabla u_{i+1},Y\rangle^2}{u_{i}^{q+3}}.
\end{array}
$$
And this is a consequence of (\ref{monotonicity of F}) with $A(x)=\langle\nabla u_{i+1},Y\rangle$, since (\ref{induction}) implies that $u_{i+1}\geq u_i$ by induction hypothesis. Thus, $u_{i+1}$ is a subsolution of $(E_{i+2})$.
\par Finally, so as to check the last point, assume there exists $x_0\in M$ such that $u_{i+1}(x_0)>\psi(x_0)$ and that it corresponds to $\max_M\left(u_{i+1}(x)-\psi(x)\right).$ Since $\nabla u_{i+1}(x_0)=\nabla \psi (x_0)$ and $\Delta_g u_{i+1}(x_0)\geq \Delta_g\psi(x_0),$ we obtain that
$$\Delta_g(u_{i+1}-\psi)(x_0)+(h+K)(u_{i+1}-\psi)(x_0)>0.$$
But $\psi$ is a supersolution for (\ref{EM}), so we get that
\begin{equation*}
\begin{array}{c}
 0<\Delta_g(u_{i+1}-\psi)(x_0)+(h(x_0)+K)(u_{i+1}-\psi)(x_0)\\ 
 \leq F(\psi(x_0),x_0)-F(u_i(x_0),x_0)-\langle\nabla\psi(x_0),Y(x_0)\rangle^2\left(\frac{1}{u_i^{q+3}}-\frac{1}{\psi^{q+3}}\right)(x_0)\\
 -\langle\nabla\psi(x_0),Y(x_0)\rangle\left(\frac{d}{u_i^2}-\frac{d}{\psi^2}+\frac{1}{u_i^{q+2}}-\frac{1}{\psi^{q+2}}\right)(x_0).
\end{array}
\end{equation*}
Thanks to (\ref{monotonicity of F}) with $A(x)=\langle\nabla\psi(x),Y(x)\rangle$ and to the induction hypothesis which says that $u_i\leq\psi$, we obtain a contradiction. This wraps up the induction argument and the proof of Step 2.\end{proof}
\par\textbf{Step 3:} The sequence $(u_i)_{i\in\mathbb{N}}$ is uniformly bounded in $\mathcal{C}^1(M)$.
\begin{proof}[Proof of Step 3:] Thanks to Step 2, we know that $(u_i)_{i\in\mathbb{N}}$ is an increasing sequence bounded by $\psi$. Thus there exists $u\in\mathcal{C}^0(M)$ such that $u_i\to u_0$ in $\mathcal{C}^0(M).$
\par Assume by contradiction that exists a subsequence $(u_{\phi(m)})_{m\in N}$ such that $||\nabla u_{\phi(m)}||_{L^\infty}\to\infty.$ Let 
$$\displaystyle \mu_m:=\frac{1}{||\nabla u_{\phi(m)}||_{L^\infty}}$$
and let $(x_m)_m\subset M$ be such that
$$|\nabla u_{\phi(m)}(x_m)|=||\nabla u_{\phi(m)}||_{L^\infty}.$$
Consider the domains $\displaystyle \Omega_m=B_{0}\left(\frac{i_g M}{2\mu_m}\right)$ and the rescaled quantities
$$v_m(x):=u_{\phi(m)}\left(\exp_{x_m}(\mu_m x)\right)\quad\text{ and }\quad g_m(x):=\left(\exp_{x_m}^* g\right)(\mu_m x)$$
in $\Omega_m$. We get
\begin{equation}\label{eq: scaling in exist thm}
\begin{array}{c}
\Delta_{g_m}v_m+\mu_m^2(h\left(\exp_{x_m}(\mu_m \cdot)\right)+K)v_m\left(\exp_{x_m}(\mu_m \cdot)\right)\\ 
\displaystyle +\mu_m^2F\left(u_{\phi(m)-1}\left(\exp_{x_m}(\mu_m \cdot)\right)\right)
\displaystyle +\frac{\langle\nabla v_m,Y\left(\exp_{x_m}(\mu_m \cdot)\right)\rangle^2}{u_{\phi(m)-1}^{q+3}\left(\exp_{x_m}(\mu_m \cdot)\right)}\\ 
\displaystyle +\mu_m\langle\nabla v_m, Y\left(\exp_{x_m}(\mu_m \cdot)\right)\rangle c\left(\exp_{x_m}(\mu_m \cdot)\right)\Big[\frac{d\left(\exp_{x_m}(\mu_m \cdot)\right)}{u_{\phi(m)-1}^2\left(\exp_{x_m}(\mu_m \cdot)\right)}\\ 
\displaystyle +\frac{1}{u_{\phi(m)-1}^{q+2}\left(\exp_{x_m}(\mu_m \cdot)\right)}\Big]=0 
\end{array}
\end{equation}
with $(v_m)_{m\in N}$ bounded in $L^\infty$, $||\nabla v_m||_{L^\infty}=1$ and $\varepsilon_0\leq v_m$. By the Sobolev embedding theorem and standard elliptic regularity, there exists a smooth positive limit $v_\infty$ of $(v_m)_{m\in N}$, up to a subsequence. Recall that $(u_i)_{i\in N}$ converges everywhere, so $u_{\phi(m)-1}(\mu_m x)\to v_\infty(x)$ in $M$. By taking $m\to\infty$ in (\ref{eq: scaling in exist thm}),
$$\Delta v_\infty+\frac{\left(\nabla v_\infty\cdot Y(0)\right)^2}{v_\infty^{q+3}}=0.$$
Note also that 
$$\Delta v_\infty^{-\alpha}\leq \frac{\alpha|\nabla v_\infty|^2}{v_\infty^{\alpha+2}}\left(\frac{|Y(0)|^2}{v_\infty^{q+2}}-(\alpha+1)\right).$$
For $\alpha$ large enough, $v_\infty^{-\alpha}$ is subharmonic. Using Lemma \ref{lem: A1} (see Annex), we find that $v_\infty$ is constant, which contradicts the fact that $||\nabla v_\infty||_{L^\infty}=1.$ This ends the proof of Step 3.\end{proof}
\par Since $(u_i)_{i\in N}$ is uniformly bounded in $\mathcal{C}^1$, we conclude by standard elliptic theory that its limit $u$ is a positive smooth function solving equation (\ref{EM}). This ends the proof of the theorem.
\end{proof}
The solution constructed in the previous proof is uniquely determined as the pointwise limit of $(u_i)_{i\in N}$, where each $u_i$ is the unique solution of (\ref{eq: the Ei eqs}). Furthermore, the solution is minimal among all supersolutions (including solutions) of (\ref{EM}) with values between $\varepsilon_0$ and $\sup_M\psi$. These bounds were explicitly used in the inductive argument. By construction, $u\leq\psi$, where $\psi$ is the supersolution fixed at the very beginning. Note that the constant $K$ appearing in (\ref{eq: the Ei eqs}) depend on $\sup_M\psi$ and $\varepsilon_0$. We would obtain the same iteration were we to use another supersolution $\tilde{\psi}$ and the same $K$, given that $\varepsilon_0<\tilde{\psi}<\sup_M\psi$. Therefore, $u$ is smaller than any supersolution between $\varepsilon_0$ and $\sup_M\psi$.
\par
As an immediate consequence of the minimality discussed above, the solutions we found corresponding to different functions $a$ are ordered. Let $0<a<\tilde{a}$ be two functions, and assume that the equation associated to $\tilde{a}$ admits a solution $\tilde{u}$. Then $\tilde{u}$ is a supersolution for (\ref{EM}) corresponding to $a$, and by the previous proof we find a solution $u\leq\tilde{u}$. Moreover, given that $\tilde{u}$ may be viewed as a supersolution to all (\ref{EM}) with $a\leq\tilde{a}$, we obtain a monotonicity of $u$ in $a$: for $a_1\leq a_2\leq \tilde{a}$, then $u_1\leq u_2\leq \tilde{u}$.
\par Finally, the solution $u$ is stable, as defined in the following lemma.
\begin{lemma}\label{lem weak stability}
The operator $L$ resulting from the linearization of (\ref{EM}) at the minimal solution $u$ admits a real, simple eigenvalue $\lambda_0\geq 0$ such that
\begin{equation*}
    \begin{array}{c}
    \displaystyle L\varphi_0=\Delta_g\varphi_0+\Big[h-(q-1)fu^{q-2}+(q+1)au^{-q-2}-bu^{-2}\\
    -(q+3)\frac{\langle\nabla u,Y\rangle^2}{u^{q+4}}-c\langle\nabla u,Y\rangle\left(\frac{2d}{u^3}+\frac{q+2}{u^{q+3}}\right)\Big]\varphi_0\\+\langle\nabla\varphi_0,Y\rangle\Big[c\left(\frac{d}{u^2}+\frac{1}{u^{q+2}}\right)+\frac{2\langle\nabla u,Y\rangle}{u^{q+3}}\Big]\\=\lambda_0\varphi_0,
    \end{array}
\end{equation*}
where $\varphi_0$  is the corresponding positive eigenfunction. Furthermore, if $\lambda\in \C$ is any other eigenvalue, then $Re(\lambda)\geq \lambda_0$. 
\end{lemma}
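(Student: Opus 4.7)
The plan is to establish Lemma \ref{lem weak stability} in two stages. First, a standard Krein--Rutman argument yields the existence, simplicity, positivity of eigenfunction, and the spectral-gap property of $\lambda_0$. Second, the minimality of the solution $u$ constructed in Theorem \ref{theo: existence of sol to Lich type eq} forces $\lambda_0 \geq 0$.

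For the first stage, note that $L$ is a second-order linear elliptic operator on the closed manifold $M$ with smooth coefficients, of the form $L = \Delta_g + B \cdot \nabla + P$, for a smooth vector field $B$ and a smooth function $P$ read off directly from the statement. Choose $\mu > 0$ large enough that $L + \mu$ is coercive with zeroth-order coefficient $P + \mu > 0$. Then the resolvent $R_\mu := (L + \mu)^{-1}$, viewed as a map $\mathcal{C}^0(M) \to \mathcal{C}^{2,\gamma}(M) \hookrightarrow \mathcal{C}^0(M)$, is well defined by elliptic theory, compact by Arzel\`a--Ascoli, and strongly positive by the strong maximum principle. The Krein--Rutman theorem then produces a simple largest eigenvalue $r_\mu > 0$ of $R_\mu$ with a positive smooth eigenfunction $\varphi_0$, and $|\nu| \leq r_\mu$ for every other eigenvalue $\nu$ of $R_\mu$. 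Setting $\lambda_0 := 1/r_\mu - \mu$ produces the real simple eigenvalue of $L$ with positive eigenfunction $\varphi_0$. For any other eigenvalue $\lambda = \alpha + i\beta$ of $L$, the bijection $\nu = 1/(\mu + \lambda)$ combined with $|\nu| \leq r_\mu$ gives $(\mu + \alpha)^2 + \beta^2 \geq (\mu + \lambda_0)^2$, i.e.\ $2\mu(\alpha - \lambda_0) \geq \lambda_0^2 - \alpha^2 - \beta^2$; since this inequality holds for all $\mu$ large, it forces $\alpha \geq \lambda_0$.

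For the second stage, suppose by contradiction $\lambda_0 < 0$ and set $u_\varepsilon := u - \varepsilon\varphi_0$. Since $\varphi_0 > 0$ is smooth and, by the strong maximum principle applied to the equation for $u$ together with the choice of $\varepsilon_0$, one has $\inf_M u > \varepsilon_0$, it follows that $\varepsilon_0 < u_\varepsilon < u \leq \sup_M\psi$ for all small $\varepsilon > 0$. Denoting by $\mathcal{E}$ the nonlinear operator in (\ref{EM}), a Taylor expansion around $u$ gives
$$\mathcal{E}(u_\varepsilon) = \mathcal{E}(u) - \varepsilon L\varphi_0 + O(\varepsilon^2) = -\varepsilon\lambda_0\varphi_0 + O(\varepsilon^2),$$
the remainder being uniform on $M$ by smoothness and the lower bound $\inf u > 0$. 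With $\lambda_0 < 0$ and $\varphi_0 > 0$, this is strictly positive for small $\varepsilon$, so $u_\varepsilon$ is a strict supersolution of (\ref{EM}) lying in $[\varepsilon_0, \sup_M\psi]$. The minimality property recorded just after the proof of Theorem \ref{theo: existence of sol to Lich type eq} then forces $u \leq u_\varepsilon = u - \varepsilon\varphi_0$, a contradiction.

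The main obstacle is ensuring that the Taylor remainder above is genuinely controlled uniformly on $M$, given the gradient-quadratic term $\langle\nabla u, Y\rangle^2/u^{q+3}$ and the mixed gradient term $c\langle\nabla u, Y\rangle(d/u^2 + 1/u^{q+2})$ appearing in (\ref{EM}). Since $u$ is smooth with $\inf_M u > 0$ and $\varphi_0$ is smooth on the compact manifold $M$, these nonlinearities are $\mathcal{C}^\infty$ in their arguments in a neighbourhood of $u$, so the linearization coincides exactly with the operator $L$ displayed in the statement and the second-order Taylor remainder is bounded by $C\varepsilon^2\|\varphi_0\|_{\mathcal{C}^1}^2$ uniformly; beyond this book-keeping no new difficulty arises, the Krein--Rutman step being classical in the present smooth compact setting.
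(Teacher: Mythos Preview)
Your proof is correct and follows essentially the same two-stage route as the paper: existence of a principal eigenvalue with positive eigenfunction and the real-part bound, followed by the contradiction $u_\varepsilon=u-\varepsilon\varphi_0$ being a smaller supersolution if $\lambda_0<0$. The only difference is cosmetic: the paper obtains the principal-eigenvalue package by citing Evans, Section~6.5, Theorem~1 (whose proof is itself Krein--Rutman applied to the resolvent), whereas you unpack the Krein--Rutman argument explicitly and add the nice observation that the inequality $2\mu(\alpha-\lambda_0)\geq \lambda_0^2-\alpha^2-\beta^2$, valid for all large $\mu$, forces $\alpha\geq\lambda_0$.
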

\begin{proof}[Proof of Lemma \ref{lem weak stability}:] Notice that $L$ is nonsymmetric; moreover, one may find a large enough constant $K$ such that
$$
\begin{array}{c}
h-(q-1)fu^{q-2}+(q+1)au^{-q-2}-bu^{-2}-(q+3)\frac{\langle\nabla u,Y\rangle^2}{u^{q+4}}\\
-c\langle\nabla u,Y\rangle\left(\frac{2d}{u^3}+\frac{q+2}{u^{q+3}}\right)+K\geq 0.
\end{array}
$$
According to (\cite{Eva}, Section 6.5, Theorem 1) there exists a real, positive eigenvalue $\lambda_K>0$ of $L+K$, such that any other complex eigenvalue of $L+K$ has a greater real part. Consequently, the operator $L$ admits a minimal real eigenvalue $\lambda_0>-K$. We now assume that $\lambda_0<0$. Let $u_\delta:=u_0-\delta\varphi_0$, $\delta>0$. By taking $\delta$ small enough, we may ensure that $\varepsilon_0< u_\delta$. Then
$$\begin{array}{c}
\displaystyle \Delta_g u_\delta+hu_\delta-fu_\delta^{q-1}-\frac{a}{u_\delta^{q+1}}+\frac{b}{u_\delta}+\frac{\langle\nabla u_\delta,Y\rangle^2}{u_\delta^{q+3}}+c\langle\nabla u_\delta, Y\rangle\left(\frac{d}{u_\delta^2}+\frac{1}{u_\delta^{q+2}}\right)\\ \\
\displaystyle =-\delta\lambda_0\varphi_0+o(\delta).
\end{array}
$$
This implies that $\varepsilon_0< u_\delta<u$ is a supersolution of (\ref{EM}), which cannot be the case, as discussed above. Thus, $\lambda_0\geq 0.$
\end{proof}

\section{A priori estimates on solutions of the scalar equation in low dimensions}
The $\mathcal{C}^1$ estimates obtained in this section will play a crucial role in the proof of Theorem \ref{thm 1}, which is based on a fixed-point argument. This section is devoted to the proof of the following theorem: 
\begin{theo}\label{theo stability eq}
Let $(M,g)$ be a closed Riemannian manifold of dimension $n=3,4,5$. Let $\frac{1}{2}<\eta<1$ and $0<\alpha<1$. Let $a$, $b$, $c$, $d$, $f$, $h$ be smooth functions on $M$, let $Y$ be a smooth vector field on the $M$. 
\par For any $0<\theta<T$, there exists $S_{\theta,T}$ such that any smooth positive solution $u$ of (\ref{EM}) with parameters within
$$
\begin{array}{c}
\mathcal{E}_{\theta,T}:=\Big\{(f,a,b,c,d,h,Y),\quad f\geq \theta,\quad a\geq \theta, \\
\quad ||f||_{\mathcal{C}^{1,\eta}}\leq T,\quad ||a||_{\mathcal{C}^{0,\alpha}},||b||_{\mathcal{C}^{0,\alpha}}, ||c||_{\mathcal{C}^{0,\alpha}}, ||d||_{\mathcal{C}^{0,\alpha}}, ||h||_{\mathcal{C}^{0,\alpha}}, ||Y||_{\mathcal{C}^{0,\alpha}}\leq T\Big\},
\end{array}
$$
satisfies $||u||_{\mathcal{C}^2}\leq S_{\theta,T}$.
\end{theo}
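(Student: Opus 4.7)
The plan is a standard blow-up-by-contradiction argument, carried out in four stages: a uniform positive lower bound on $u$, an $L^\infty$ upper bound, a $\mathcal{C}^1$ bound, and a $\mathcal{C}^2$ bound obtained by elliptic bootstrap.

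For the \emph{lower bound} I evaluate (\ref{EM}) at a minimum point $x_0$ of $u$: the gradient terms drop, $\Delta_g u(x_0)\le 0$ in the paper's convention, and
\[ f(x_0)u(x_0)^{q-1}+\frac{a(x_0)}{u(x_0)^{q+1}}\le h(x_0)u(x_0)+\frac{b(x_0)}{u(x_0)} \]
follows; multiplying by $u(x_0)^{q+1}$ and using $a\ge\theta$ together with the $\mathcal{C}^{0,\alpha}$ bounds on the parameters yields a uniform $u\ge\underline{u}(\theta,T)>0$. For the \emph{$L^\infty$ upper bound}, I argue by contradiction: assume $u_m$ are solutions with parameters in $\mathcal{E}_{\theta,T}$ and $M_m:=\max_M u_m\to\infty$. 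Up to a subsequence the parameters converge (Arzel\`a-Ascoli in a slightly weaker H\"older norm, using that $f\in\mathcal{C}^{1,\eta}$ in particular). Set $\mu_m:=M_m^{-2/(n-2)}$ and rescale $v_m(y):=M_m^{-1}u_m(\exp_{x_m}(\mu_m y))$ around a maximizer $x_m$. Because $q-1=(n+2)/(n-2)$ is Sobolev-critical, the nonlinearity $fu_m^{q-1}$ rescales to $f(x_0)v_m^{q-1}$, while every other nonlinear term---$a/u^{q+1}$, $b/u$, $hu$, $\langle\nabla u,Y\rangle^2/u^{q+3}$, and the linear-gradient pieces---carries a strictly negative power of $M_m$ and vanishes in the limit. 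Standard elliptic theory gives $\mathcal{C}^2_{\mathrm{loc}}$ convergence to a smooth positive $v_\infty$ on $\R^n$ solving $\Delta v_\infty=f(x_0)v_\infty^{q-1}$; by Caffarelli-Gidas-Spruck, $v_\infty$ is a standard bubble. To derive a contradiction I apply a Pohozaev-type identity on a small geodesic ball $B_\rho(x_m)$: multiplying (\ref{EM}) by $r\partial_r u_m+\frac{n-2}{2}u_m$ and integrating by parts makes the critical term cancel in the interior, leaving the $-a/u^{q+1}$ mass correction as the leading interior contribution. Following Premoselli \cite{Pre14}, in dimensions $n\in\{3,4,5\}$ this correction carries a strict sign which the bubble profile cannot reconcile with the (controlled) boundary terms, contradicting $M_m\to\infty$.

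For the \emph{$\mathcal{C}^1$ bound}, assume now that $u_m$ is $L^\infty$-bounded but $\|\nabla u_m\|_\infty\to\infty$. Rescale at $\mu_m:=\|\nabla u_m\|_\infty^{-1}$, setting $v_m(y):=u_m(\exp_{x_m}(\mu_m y))$ where $x_m$ maximizes $|\nabla u_m|$. Every term polynomial in $u_m$ alone picks up $\mu_m^2$, the linear-gradient terms pick up $\mu_m$, and only the quadratic gradient term survives, so the limit $v_\infty$ solves $\Delta v_\infty+\langle\nabla v_\infty,Y(x_0)\rangle^2/v_\infty^{q+3}=0$ on $\R^n$, with $v_\infty\ge\underline{u}>0$ bounded. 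Exactly as in Step 3 of the proof of Theorem \ref{theo: existence of sol to Lich type eq}, for $\alpha$ sufficiently large $v_\infty^{-\alpha}$ is Euclidean-subharmonic and bounded, so Lemma \ref{lem: A1} forces $v_\infty$ constant, contradicting $|\nabla v_\infty(0)|=1$. The \emph{$\mathcal{C}^2$ bound} then follows by bootstrap: once $u$ is uniformly $\mathcal{C}^1$-bounded and bounded below, the right-hand side of (\ref{EM}) is uniformly $L^\infty$-bounded; Calder\'on-Zygmund yields $W^{2,p}$ bounds for every $p<\infty$, Sobolev embedding upgrades $\nabla u$ to $\mathcal{C}^{0,\beta}$ for any $\beta<1$, and Schauder closes with the $\mathcal{C}^{2,\alpha}$ bound, whence the desired $\mathcal{C}^2$ estimate.

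The main obstacle is the Pohozaev step in the upper bound. The gradient nonlinearities $\langle\nabla u,Y\rangle^2/u^{q+3}$ and $c\langle\nabla u,Y\rangle(d/u^2+1/u^{q+2})$ are absent from Premoselli's Lichnerowicz setup and, as the author stresses, are ``not compact a priori''. They produce new interior and boundary contributions in the Pohozaev identity which must be shown to remain of lower order than the Premoselli mass term at the bubble scale, so that the dimension-sensitive cancellation for $n\in\{3,4,5\}$ still goes through. The uniform lower bound on $u$ from stage one and the $\mathcal{C}^{0,\alpha}$-compactness of parameters in $\mathcal{E}_{\theta,T}$ are what make these estimates uniform across the parameter family.
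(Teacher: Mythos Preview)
Your lower bound (Stage 1) and the elliptic bootstrap from $\mathcal{C}^1$ to $\mathcal{C}^2$ (Stage 4) are correct and match the paper. Your Stage 3 would also work \emph{if} an $L^\infty$ bound were already available; it is essentially the argument of Step~3 in the proof of Theorem~\ref{theo: existence of sol to Lich type eq}.

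The genuine gap is Stage 2. A single rescaling at the global maximum followed by one Pohozaev identity on a fixed ball $B_\rho(x_m)$ does not yield a contradiction. Two things go wrong. First, the boundary terms on $\partial B_\rho$ are \emph{not} controlled: without pointwise decay estimates you do not know that $u_m$ behaves like a single bubble there---other concentration points may sit nearby, and nothing in your sketch rules that out. Second, the $a/u^{q+1}$ term is not a ``leading interior contribution with a strict sign''; in the actual Pohozaev computation (proof of Lemma~\ref{lem: former lemma 7}) it contributes only $O(r_\alpha^n)$ and is negligible. The Pohozaev identity is used there not to produce a direct sign contradiction but to show that the regular part $H$ of the local expansion satisfies $H(0)=0$.

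The paper's route, following Druet--Hebey, is substantially more elaborate and does not separate $L^\infty$ from gradient blow-up. One first extracts a finite set $\mathcal{S}_\alpha$ of critical points covering all possible concentration (Lemma~\ref{lem: former lemma 2}), obtaining the weak estimate \eqref{est: initial estimate} on the combined quantity $u_\alpha+|\nabla u_\alpha/u_\alpha|^{(n-2)/2}$. Around each blow-up sequence one then proves the sharp decay $u_\alpha\le C\mu_\alpha^{(n-2)/2}d_g(x_\alpha,\cdot)^{2-n}$ (Lemma~\ref{lem: former lemma 6}); combined with the lower bound $u_\alpha\ge\varepsilon$ this forces the influence radius to satisfy $r_\alpha^2=O(\mu_\alpha)\to 0$, so \emph{isolated} blow-up is impossible. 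The contradiction finally comes from a clustering argument (Section~3.3): the closest pair of concentration points, after rescaling by their mutual distance, yields a nonnegative harmonic function on a large ball with at least two singularities, forcing the regular part at one of them to be strictly positive---contradicting the Pohozaev conclusion $H(0)=0$. The dimensional restriction $n\le 5$ enters precisely in making the error terms in that Pohozaev identity of size $o(\mu_\alpha^{n-2}r_\alpha^{2-n})$, via $r_\alpha^2=O(\mu_\alpha)$; this is also where the $\mathcal{C}^{1,\eta}$ control on $f$ with $\eta>\tfrac12$ is needed (for $n=5$).
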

\begin{rem} For the sake of clarity, we've taken the bounds on the parameters to be of the form $\theta$ and $T$. They can of course be individually specified.
\end{rem}We proceed by contradiction. We assume the existence of a sequence $(u_\alpha)_{\alpha\in\N}$ of smooth positive solutions of equations ($EL_\alpha$)
\begin{equation}\label{eq: EL alpha}
\begin{array}{c}
    \Delta_g u_\alpha+ h_\alpha u_\alpha-f_\alpha u_\alpha^{q-1}- \frac{a_\alpha}{u_\alpha^{q+1}}+\frac{b_\alpha}{u_\alpha}+\frac{\langle\nabla u_\alpha, Y_\alpha\rangle^2}{u_\alpha^{q+3}}\\
     +c_\alpha\langle\nabla u_\alpha,Y_\alpha\rangle\left[\frac{d_\alpha}{u_\alpha^2}+\frac{1}{u_\alpha^{q+2}}\right]=0
\end{array}
\end{equation}
with parameters $(a_\alpha, b_\alpha, c_\alpha, d_\alpha, f_\alpha,h_\alpha, Y_\alpha)$ in $\mathcal{E}_{\theta,T}$ such that
\begin{equation}
\label{est: explosion}
||u_\alpha||_{\mathcal{C}^1(M)}\to\infty,\quad\text{ as }\alpha\to\infty.
\end{equation}
A \textit{concentration point} is the limit in $M$ of any sequence $(x_\alpha)_\alpha$ where (\ref{est: explosion}) holds. Note that a $\mathcal{C}^1$-bound on $(u_\alpha)_\alpha$ automatically gives a $\mathcal{C}^2$-bound by elliptic theory. Note also that, up to a subsequence, all parameters converge in $\mathcal{C}^0(M).$
\par Let $m_\alpha=\min_{x\in M}u_\alpha(x)=u_\alpha(x_\alpha)>0.$ Since $\nabla u_\alpha(x_\alpha)=0$ and since $\Delta_g u_\alpha(x_\alpha)\leq 0$, we have thanks to (\ref{eq: EL alpha}) that
$$h_\alpha(x_\alpha)m_\alpha-f_\alpha(x_\alpha)m_\alpha^{q-1}-\frac{a_\alpha(x_\alpha)}{m_\alpha^{q+1}}+\frac{b_\alpha(x_\alpha)}{m_\alpha}\geq 0.$$
Thanks to the definition of $\mathcal{E}_{\theta,T},$ it follows that
$$\frac{\theta}{m_\alpha^{q+1}}\leq T(m_\alpha+\frac{1}{m_\alpha}).$$
Then there exists $\varepsilon=\varepsilon(\theta,T,n)>0$ such that $m_\alpha\geq \varepsilon$, meaning that
\begin{equation}\label{varepsilon}
u_\alpha>\varepsilon>0\quad \text{ for all } x\in M \text{ and all } \alpha.
\end{equation}
\par The scheme of the proof follows the work of Druet and Hebey \cite{DruHeb}, with the added difficulty consisting in the gradient terms in (\ref{eq: EL alpha}).
\subsection{Concentration points}
The first step in finding \textit{a priori} estimates for $(u_\alpha)_\alpha$ is to find all potential concentration points. 
\begin{lemma}\label{lem: former lemma 2}
There exists $N_\alpha\in \N^*$ and $\displaystyle \mathcal{S}_\alpha:=\left(x_{1,\alpha},\dots x_{N_\alpha, \alpha}\right)$ a set of critical points of $(u_\alpha)_\alpha$ such that
\begin{equation}\label{est: 1 of former lemma 1}
d_g(x_{i,\alpha},x_{j,\alpha})^{\frac{n-2}{2}}u_\alpha(x_{i,\alpha})\geq 1
\end{equation}
for all $i,j\in\{1,\dots,N_\alpha\}$, $i\not =j$, and
\begin{equation}\label{est: add fact}
\left(\min_{i=1,\dots, N_\alpha}d_g(x_{i,\alpha},x)\right)^{\frac{n-2}{2}}u_\alpha(x)\leq 1
\end{equation}
for all critical points of $u_\alpha$ and such that there exists $C_1>0$ such that
\begin{equation}\label{est: initial estimate}\left(\min_{i=1,\dots N_\alpha} d_g(x_{i,\alpha},x)\right)^{\frac{n-2}{2}}\left(u_\alpha(x)+\left|\frac{\nabla u_\alpha(x)}{u_\alpha(x)}\right|^{\frac{n-2}{2}}\right)\leq C_1\end{equation}
for all $x\in M$ and all $\alpha\in \N$.
\end{lemma}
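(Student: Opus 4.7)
My plan has two stages: first build $\mathcal{S}_\alpha$ by greedy exhaustion among critical points of $u_\alpha$, then prove the weighted pointwise bound (\ref{est: initial estimate}) by contradiction and blow-up.

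For the selection, take $x_{1,\alpha}$ to be a point realizing $\max_M u_\alpha$ (hence a critical point). Inductively, having built $x_{1,\alpha},\dots,x_{k,\alpha}$ with $u_\alpha(x_{1,\alpha}) \geq \cdots \geq u_\alpha(x_{k,\alpha})$ and (\ref{est: 1 of former lemma 1}) satisfied pairwise, I set
$$\mathcal{C}_{k,\alpha} = \bigl\{\,y \notin \{x_{1,\alpha},\dots,x_{k,\alpha}\} : \nabla u_\alpha(y)=0 \text{ and } d_g(x_{i,\alpha},y)^{(n-2)/2} u_\alpha(y) \geq 1 \text{ for all } i \leq k\,\bigr\}.$$
If $\mathcal{C}_{k,\alpha}$ is empty, stop with $N_\alpha = k$; the stopping condition is precisely (\ref{est: add fact}). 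Otherwise pick $x_{k+1,\alpha} \in \mathcal{C}_{k,\alpha}$ maximizing $u_\alpha$. By the induction one has $u_\alpha(x_{k+1,\alpha}) \leq u_\alpha(x_{j,\alpha})$ for $j \leq k$, which upgrades the membership inequality $d_g(x_{j,\alpha},x_{k+1,\alpha})^{(n-2)/2} u_\alpha(x_{k+1,\alpha}) \geq 1$ into the asymmetric bound $d_g(x_{j,\alpha},x_{k+1,\alpha})^{(n-2)/2} u_\alpha(x_{j,\alpha}) \geq 1$ required by (\ref{est: 1 of former lemma 1}). Finiteness of $N_\alpha$ comes from the disjointness of the balls $B(x_{i,\alpha},\tfrac{1}{2}u_\alpha(x_{i,\alpha})^{-2/(n-2)})$ combined with the uniform lower bound (\ref{varepsilon}), which gives each such ball a definite positive minimum volume inside $M$.

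For (\ref{est: initial estimate}) I argue by contradiction: assume $y_\alpha \in M$ drives $d_\alpha(y_\alpha)^{(n-2)/2}\bigl(u_\alpha(y_\alpha) + |\nabla u_\alpha(y_\alpha)/u_\alpha(y_\alpha)|^{(n-2)/2}\bigr) \to +\infty$, with $d_\alpha(x) := \min_i d_g(x_{i,\alpha},x)$. A direct check shows that the weight $u + |\nabla u/u|^{(n-2)/2}$ scales exactly like $u$ under the blow-up $v(x) = \mu^{(n-2)/2} u(\exp_p(\mu x))$, so the product $d^{(n-2)/2}(u + |\nabla u/u|^{(n-2)/2})$ is scale-invariant. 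I therefore set
$$\mu_\alpha := \bigl(u_\alpha(y_\alpha) + |\nabla u_\alpha(y_\alpha)/u_\alpha(y_\alpha)|^{(n-2)/2}\bigr)^{-2/(n-2)},$$
so that $\mu_\alpha/d_\alpha(y_\alpha) \to 0$, and rescale via $v_\alpha(x) = \mu_\alpha^{(n-2)/2} u_\alpha(\exp_{y_\alpha}(\mu_\alpha x))$ on exhausting balls of $\mathbb{R}^n$. By construction $v_\alpha(0) + |\nabla v_\alpha(0)/v_\alpha(0)|^{(n-2)/2} = 1$; after a standard re-selection ensuring $y_\alpha$ almost-maximizes the weighted quantity on the relevant ball, elliptic regularity combined with (\ref{varepsilon}) extracts a limit $v_\infty$ on $\mathbb{R}^n$ solving a reduced equation retaining the critical pieces of (\ref{eq: EL alpha}). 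Two alternatives arise: either $v_\infty$ is a positive non-trivial solution of the limit equation, in which case its maximum unscales to a critical point of $u_\alpha$ that escapes $\mathcal{S}_\alpha$ and contradicts (\ref{est: add fact}); or a Liouville argument based on the subharmonicity of $v_\infty^{-p}$ for $p$ large (the mechanism already used via Lemma \ref{lem: A1} in the proof of Theorem \ref{theo: existence of sol to Lich type eq}) forces $v_\infty$ to be constant, contradicting the normalization at the origin.

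The main obstacle is the gradient nonlinearity $\langle\nabla u, Y\rangle^2/u^{q+3}$: as emphasized in the introduction, blow-up can be purely $\mathcal{C}^1$, with $\|u_\alpha\|_\infty$ bounded while $\|\nabla u_\alpha\|_\infty \to \infty$. This is exactly why the hybrid weight $u + |\nabla u/u|^{(n-2)/2}$ appears on the left of (\ref{est: initial estimate}) and enters the definition of $\mu_\alpha$, and it forces two distinct blow-up regimes—scalar-dominated and gradient-dominated—to be treated in parallel, each requiring its own Liouville-type argument for the corresponding limit equation on $\mathbb{R}^n$. The dimension restriction $n \in \{3,4,5\}$ is expected to enter through decay rates of the standard bubbles and the integrability needed to turn the volume-packing estimate above into uniform control of $N_\alpha$ later in the analysis.
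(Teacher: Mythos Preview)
Your proposal is correct and follows essentially the same route as the paper: the greedy selection of $\mathcal{S}_\alpha$ is exactly the content of the auxiliary Lemma~\ref{lem: former lemma 1} (cited from Druet--Hebey), and your contradiction/blow-up argument for (\ref{est: initial estimate}), splitting into the scalar-dominated case (Caffarelli--Gidas--Spruck bubble whose maximum contradicts (\ref{est: add fact})) and the gradient-dominated case (Liouville via Lemma~\ref{lem: A1}), matches the paper's two cases precisely. One technical point you gloss over: in the gradient-dominated regime $v_\alpha(0)\to 0$, so the paper introduces a second renormalization $w_\alpha=v_\alpha/v_\alpha(0)$ to secure the positive lower bound needed for the subharmonicity argument---without it, ``$v_\infty^{-p}$ subharmonic'' makes no sense since $v_\infty\equiv 0$.
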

\begin{rem}
Estimate (\ref{est: initial estimate}) implies that any concentration point of $(u_\alpha)_{\alpha\in N}$ calls for the existence of a sequence $(x_\alpha)_\alpha\subset (\mathcal{S}_\alpha)_\alpha$ converging to it. We shall focus our analysis in the neighbourhood of $S_\alpha$ as $\alpha\to\infty$ to find concentration points.
\end{rem}
\begin{proof}[Proof of Lemma \ref{lem: former lemma 2}:] 
In order to choose $(S_\alpha)_\alpha$, we make use of a simple result describing any sufficiently regular function on a compact manifold. 
\begin{lemma}\label{lem: former lemma 1}
Let $u$ be a positive real-valued $\mathcal{C}^2$ function defined in a compact manifold $M$. Then there exists $N\in \N^*$ and $\left(x_1,\, x_2,\dots x_N\right)$ a set of critical points of $u$ such that
$$
    d_g(x_i,x_j)^{\frac{n-2}{2}}u(x_i)\geq 1
$$
for all $i,\,j\in\{1, \dots, N\}$, $i\not= j$, and 
$$\left(\min_{i=1,\dots, N}d_g(x_i,x)\right)^{\frac{n-2}{2}}u(x)\leq 1$$
for all critical points $x$ of $u$.
\end{lemma}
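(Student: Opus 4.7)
The plan is to construct the family $(x_1, \dots, x_N)$ greedily, selecting critical points with maximal $u$-value subject to a separation constraint, and then to show finiteness of the process via compactness.

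First I would set
\[
\tilde A_0 := \{x \in M : \nabla u(x) = 0\},
\]
which is a compact subset of $M$ (continuity of $\nabla u$) containing the points where $u$ attains its global maximum. I pick $x_1 \in \tilde A_0$ maximizing $u$. Having chosen $x_1,\ldots,x_k$, I set
\[
\tilde A_k := \{x \in M : \nabla u(x) = 0,\ d_g(x_i,x)^{(n-2)/2}u(x) \geq 1 \text{ for all } i \leq k\}.
\]
Each $\tilde A_k$ is closed, hence compact, so if it is non-empty I can pick $x_{k+1} \in \tilde A_k$ with maximal $u$-value. If $\tilde A_k = \emptyset$, I stop and set $N=k$.

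The key property I would then extract from the construction is the monotonicity of the $u$-values: since $\tilde A_j \subset \tilde A_i$ for $i \leq j$ and $x_{i+1}$ was chosen to maximize $u$ over $\tilde A_i$, and since none of the previous $x_l$ with $l \leq i$ belong to $\tilde A_i$ (the factor $d_g(x_l,x_l) = 0$ violates the constraint), one gets $u(x_{i+1}) \geq u(x_{j+1})$ whenever $i \leq j$. The first required inequality then follows: for $i < j$, the defining property of $x_j \in \tilde A_{j-1}$ gives $d_g(x_i,x_j)^{(n-2)/2}u(x_j) \geq 1$, and multiplying by $u(x_i)/u(x_j) \geq 1$ yields $d_g(x_i,x_j)^{(n-2)/2}u(x_i) \geq 1$; the case $i>j$ is handled symmetrically by exchanging the roles.

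For the termination of the process, I argue by contradiction. If infinitely many $x_k$ are produced, by compactness of $M$ a subsequence converges, so there exist indices $i < j$ with $d_g(x_i,x_j)$ arbitrarily small; but then the inequality $d_g(x_i,x_j)^{(n-2)/2}u(x_j) \geq 1$ would force $u(x_j) \to \infty$, contradicting the boundedness of $u$ on the compact manifold $M$. Thus $N < \infty$. Finally, when the process stops, $\tilde A_N = \emptyset$ means that every critical point $x$ of $u$ fails the defining condition of $\tilde A_N$, i.e.\ there exists some $i \leq N$ with $d_g(x_i,x)^{(n-2)/2}u(x) < 1$, which is exactly the second claimed inequality $\bigl(\min_i d_g(x_i,x)\bigr)^{(n-2)/2} u(x) \leq 1$.

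The main obstacle, and really the only subtle point, is to properly ensure that the greedy maximum is actually attained at each step so that the monotonicity of $u(x_k)$ holds on the nose — this is why I would insist on formulating $\tilde A_k$ with the closed condition $\geq 1$ rather than with the strict inequality, which would make $\tilde A_k$ only locally closed and the supremum potentially unattained. The rest is routine bookkeeping of the induction and a direct compactness argument.
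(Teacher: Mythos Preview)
Your greedy construction is correct and is exactly the standard argument: the paper does not give its own proof but refers to Druet--Hebey \cite{DruHeb}, where precisely this maximal-selection procedure is carried out. Your handling of the closed condition $\geq 1$ to guarantee compactness of $\tilde A_k$, the monotonicity $u(x_i)\geq u(x_j)$ for $i<j$, and the termination via compactness are all in order; nothing is missing.
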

The lemma and its proof may be found in Druet and Hebey's paper \cite{DruHeb}. Applying this lemma to $(u_\alpha)$ gives $N_\alpha$ and $\mathcal{S}_\alpha$ as in Lemma \ref{lem: former lemma 2} such that (\ref{est: 1 of former lemma 1}) and (\ref{est: add fact}) hold. We need to prove (\ref{est: initial estimate}).
Proceeding by contradiction, assume that there exists a sequence $(x_\alpha)_\alpha$ such that 
\begin{equation}\label{est: contradiction initial estimate}\left(\min_{i=1,\dots N_\alpha} d_g(x_{i,\alpha},x_\alpha)\right)^{\frac{n-2}{2}}\left(u_\alpha(x_\alpha)+\left|\frac{\nabla u_\alpha(x_\alpha)}{u_\alpha(x_\alpha)}\right|^{\frac{n-2}{2}}\right)\to\infty\end{equation}
as $\alpha\to\infty$, where
\begin{equation}\label{contr}
\begin{array}{c}
 \displaystyle \left(\min_{i=1,\dots N_\alpha} d_g(x_{i,\alpha},x_\alpha)\right)^{\frac{n-2}{2}}\left(u_\alpha(x_\alpha)+\left|\frac{\nabla u_\alpha(x_\alpha)}{u_\alpha(x_\alpha)}\right|^{\frac{n-2}{2}}\right)\\ \\
 \displaystyle =\sup_{x\in M}\left(\min_{i=1,\dots N_\alpha} d_g(x_{i,\alpha},x)\right)^{\frac{n-2}{2}}\left(u_\alpha(x)+\left|\frac{\nabla u_\alpha(x)}{u_\alpha(x)}\right|^{\frac{n-2}{2}}\right).
\end{array}
\end{equation}

Denote $$\nu_\alpha^{1-\frac{n}{2}}:=u_\alpha(x_\alpha)+\left|\frac{\nabla u_\alpha(x_\alpha)}{u_\alpha(x_\alpha)}\right|^{\frac{n-2}{2}}$$ 
and see that (\ref{est: contradiction initial estimate}) translates to
\begin{equation}\label{est: distance to s alpha}
    \frac{d_g(x_\alpha, \mathcal{S}_\alpha)}{\nu_\alpha}\to\infty\quad\text{as}\quad \alpha\to\infty.
\end{equation}
Also, since $M$ is compact, 
\begin{equation}\label{nu goes to zero}
\nu_\alpha\to 0 \text{ as }\alpha\to\infty.
\end{equation}
Consider the rescaled quantities
$$v_\alpha(x):=\nu_\alpha^{\frac{n-2}{2}}u_\alpha\left(\exp_{x_\alpha}(\nu_\alpha x)\right)\quad\text{and}\quad g_\alpha(x):=\left(\exp_{x_\alpha}^* g\right)(\nu_\alpha x)$$
 defined in $\Omega_\alpha:=B_0\left(\frac{\delta}{\nu_\alpha}\right)$, with $ 0<\delta<\frac{1}{2}i_g(M).$ We emphasize that, for any $R>0$, 
\begin{equation}\label{est: size of v alpha and co}
    \limsup_{\alpha\to\infty}\sup_{B_0(R)} \left(v_\alpha+\left|\frac{\nabla v_\alpha}{v_\alpha}\right|^{\frac{n-2}{2}}\right) =1
\end{equation}
thanks to (\ref{contr}) and (\ref{est: distance to s alpha}). However, unlike $(u_\alpha)_\alpha$, the sequence $(v_\alpha)_\alpha$ is not necessarily bounded from below by a small positive constant $\varepsilon$. Instead, we deduce from (\ref{est: size of v alpha and co}) that 
$$|\nabla \ln v_\alpha|\leq 1+o(1)\quad\text{ in } B_0(R)$$
for all $R>0$ so that
\begin{equation}\label{size of v alpha}
    v_\alpha(0)e^{-2|x|}\leq v_\alpha(x)\leq v_\alpha(0)e^{2|x|}\quad\text{ in  } B_0(R)
\end{equation}
for all $R>0$ as soon as $\alpha$ is large enough.
We rewrite (\ref{eq: EL alpha}) in $\Omega_\alpha$ as
\begin{equation}\label{eq: Ev alpha eqs}
    \begin{array}{c}
          \Delta_{g_\alpha}v_\alpha = f_\alpha\left(\exp_{x_\alpha}(\nu_\alpha\cdot)\right)v_\alpha^{q-1}+\nu_\alpha^{\frac{n+2}{2}}\frac{a_\alpha\left(\exp_{x_\alpha}(\nu_\alpha\cdot)\right)}{u_\alpha^{q+1}\left(\exp_{x_\alpha}(\nu_\alpha\cdot)\right)} \\ 
          -\nu_\alpha^2 h_\alpha\left(\exp_{x_\alpha}(\nu_\alpha\cdot)\right)v_\alpha-\nu_\alpha^{\frac{n+2}{2}}\frac{b_\alpha \left(\exp_{x_\alpha}(\nu_\alpha\cdot)\right)}{u_\alpha \left(\exp_{x_\alpha}(\nu_\alpha\cdot)\right)}\\ 
      -\frac{\langle\nabla v_\alpha,Y_\alpha\left(\exp_{x_\alpha}(\nu_\alpha\cdot)\right)\rangle^2}{v_\alpha}\frac{1}{u_\alpha^{q+2}\left(\exp_{x_\alpha}(\nu_\alpha\cdot)\right)}\\ 
      -\nu_\alpha c_\alpha\left(\exp_{x_\alpha}(\nu_\alpha\cdot)\right)\langle\nabla v_\alpha,Y_\alpha\left(\exp_{x_\alpha}(\nu_\alpha\cdot)\right)\rangle\Big[\frac{d_\alpha \left(\exp_{x_\alpha}(\nu_\alpha\cdot)\right)}{u_\alpha^2\left(\exp_{x_\alpha}(\nu_\alpha\cdot)\right)}\\ 
      +\frac{1}{u_\alpha^{q+2}\left(\exp_{x_\alpha}(\nu_\alpha\cdot)\right)}\Big]
    \end{array}
\end{equation}
Note that the metrics $g_\alpha\to\xi$ in $\mathcal{C}^2_{loc}$ as $\alpha\to\infty$. Because of (\ref{varepsilon}) and (\ref{est: size of v alpha and co}), the right hand side is bounded, so by standard elliptic theory there exists up to a subsequence a $\mathcal{C}^1$ limit $U:=\lim_{\alpha\to\infty}v_\alpha$ and $x_0:=\lim_{\alpha\to\infty}x_\alpha$.
\par \textbf{First case:} If $u_\alpha(x_\alpha)\to\infty$ as $\alpha\to\infty$, then we can pass to the limit in equation (\ref{eq: Ev alpha eqs}) to get
$$\Delta U=f(x_0)U^{q-1}$$
in $\R^n$. The exact form of these solutions is found in a paper by Caffarelli, Gidas and Spruck \cite{CaffGidSpr}:
$$U(x)=\left(1+\frac{f(x_0)|x-y_0|^2}{n(n-2)}\right)^{1-\frac{n}{2}}$$
with $y_0\in \R^n$ the unique maximum point of the function. There exist therefore $(y_\alpha)_\alpha$ local maxima of $(u_\alpha)_\alpha$ such that 
\begin{equation}\label{est: blow up analysis, distances}
    d_g(x_\alpha,y_\alpha)=O(\nu_\alpha)
\end{equation}
and
\begin{equation}\label{est lim 1}
    \nu_\alpha^{\frac{n-2}{2}}u_\alpha(y_\alpha)\to 1\quad \text{ as }\quad \alpha\to\infty.
\end{equation}
Since $(y_\alpha)_\alpha$ are critical points, (\ref{est: add fact}) implies that
$$d_g(\mathcal{S}_\alpha, y_\alpha)^{\frac{n-2}{2}}u_\alpha(y_\alpha)\leq 1$$
for all $\alpha\in \N$, so by (\ref{est lim 1}), $\displaystyle d_g(\mathcal{S}_\alpha,y_\alpha)=O(\nu_\alpha)$; together with (\ref{est: blow up analysis, distances}), this leads to $\displaystyle d_g(\mathcal{S}_\alpha,x_\alpha)=O(\nu_\alpha)$, which contradicts (\ref{est: distance to s alpha}).
\par\textbf{Second case:} Assume that, up to a subsequence, $u_\alpha(x_\alpha)\to l<\infty$. From (\ref{nu goes to zero}) we deduce that $|\nabla u_\alpha (x_\alpha)|\to\infty$ and that $v_\alpha(0)\to 0$ as $\alpha\to\infty$. Let us set
$$w_\alpha(x):=\frac{v_\alpha(x)}{v_\alpha(0)}.$$
These functions are bounded from below, since by (\ref{varepsilon}),
\begin{equation}\label{est w}
w_\alpha(x)=\frac{u_\alpha\left(\exp_{x_\alpha}(\nu_\alpha\cdot)\right)}{u_\alpha(x_\alpha)}\geq\frac{\varepsilon}{l}+o(1)
\end{equation}
in $B_0(R)$ for all $R>0$. Moreover, (\ref{size of v alpha}) implies that
$$w_\alpha(x)\leq e^{2|x|}$$
in $B_0(R)$ for $\alpha$ large. Multiply (\ref{eq: Ev alpha eqs}) by $v_\alpha(0)^{-1}$ to get
\begin{equation*}
    \begin{array}{r l}
    \displaystyle \Delta_g w_\alpha=     &\displaystyle f_\alpha\left(\exp_{x_\alpha}(\nu_\alpha\cdot)\right)w_\alpha^{q-1}v_\alpha^{q-2}(0)+\nu_\alpha^2\frac{a\left(\exp_{x_\alpha}(\nu_\alpha\cdot)\right)}{u_\alpha^{q+1}\left(\exp_{x_\alpha}(\nu_\alpha\cdot)\right)u_\alpha(x_\alpha)}   \\
         & \displaystyle -\nu_\alpha^2h_\alpha\left(\exp_{x_\alpha}(\nu_\alpha\cdot)\right)w_\alpha-\nu_\alpha^2\frac{b_\alpha\left(\exp_{x_\alpha}(\nu_\alpha\cdot)\right)}{u_\alpha\left(\exp_{x_\alpha}(\nu_\alpha\cdot)\right)u_\alpha(x_\alpha)}\\ 
         & \displaystyle-\frac{\langle\nabla w_\alpha,Y_\alpha\left(\exp_{x_\alpha}(\nu_\alpha\cdot)\right)\rangle^2}{w_\alpha^{q+2}}\frac{1}{u_\alpha(x_\alpha)^{q+2}}\\
         &\displaystyle -\nu_\alpha c_\alpha\left(\exp_{x_\alpha}(\nu_\alpha\cdot)\right)\langle\nabla w_\alpha,Y_\alpha\left(\exp_{x_\alpha}(\nu_\alpha\cdot)\right)\rangle\Big[\frac{d_\alpha \left(\exp_{x_\alpha}(\nu_\alpha\cdot)\right)}{u_\alpha^2\left(\exp_{x_\alpha}(\nu_\alpha\cdot)\right)}\\ 
         &\displaystyle +\frac{1}{u_\alpha^{q+2}\left(\exp_{x_\alpha}(\nu_\alpha\cdot)\right)}\Big].
    \end{array}
\end{equation*}
    By standard elliptic theory, we find that there exists $w:=\lim_{\alpha\to\infty}w_\alpha$ in $\mathcal{C}^1$ solving:
    $$\Delta w=-\frac{1}{l^{q+2}}\frac{\langle\nabla w, Y(x_0)\rangle^2}{w^{q+3}}$$
    in $\R^n$. Note that, since $w\geq \frac{\varepsilon}{l}$ by (\ref{est w}), 
    $$\Delta w^{-\alpha}\leq \alpha\frac{|\nabla w|^2}{w^{\alpha+2}}\left[\frac{|Y(x_0)|^2}{\varepsilon^{q+2}}-(\alpha+1)\right],$$
    so $w^{-\alpha}$ is subharmonic for $\alpha$ large. By applying Lemma \ref{lem: A1} (see the Annex), we deduce that $w$ is constant, which in turn implies that $U=0$, and so $\nabla U=0$. This implies that
$$\lim_{\alpha\to\infty}\nu_\alpha\frac{\nabla u_\alpha(x_\alpha)}{u_\alpha(x_\alpha)}=0,$$    
     as it contradicts the choice of $\nu_\alpha$ above, which may be rewritten as
     \begin{equation*}
     \nu_\alpha^\frac{n-2}{2}\left(u_\alpha(x_\alpha)+\left|\frac{\nabla u_\alpha(x_\alpha)}{u_\alpha(x_\alpha)}\right|\right)=1,
     \end{equation*}
since we are in the case where $\nu_\alpha^{\frac{n-2}{2}}u_\alpha(x_\alpha)=v_\alpha(0)\to 0$ as $\alpha\to 0.$
\end{proof}

\par The following is a Harnack-type inequality. It holds whenever an estimate like (\ref{est: initial estimate}) is verified, that is when there exists a constant $C_2$ and a sequence $(x_\alpha,\rho_\alpha)_\alpha$ such that 
\begin{equation}\label{est: for Harnack}
   d_g(x_\alpha,x)^{\frac{n-2}{2}}\left[u_\alpha(x)+\left|\frac{\nabla u_\alpha(x)}{u_\alpha(x)}\right|^{\frac{n-2}{2}}\right]\leq C_2,\quad\forall x\in B_{x_\alpha}(7\rho_\alpha).
\end{equation}
\begin{lemma}\label{lem: former lemma 3}
Let $(x_\alpha, \rho_\alpha)_\alpha$ be a sequence such that (\ref{est: for Harnack}) holds. Then there exists a constant $C_3>1$ such that for any sequence $0<s_\alpha\leq\rho_\alpha$, we get
$$s_\alpha||\nabla u_\alpha||_{L^\infty(\Omega_\alpha)}\leq C_3\sup_{\Omega_\alpha}u_\alpha\leq C_3^2\inf_{\Omega_\alpha}u_\alpha,$$
where $\Omega_\alpha=B_{x_\alpha}(6s_\alpha)\backslash B_{x_\alpha}(\frac{1}{6}s_\alpha).$
\end{lemma}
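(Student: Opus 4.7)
My plan is to read hypothesis (\ref{est: for Harnack}) directly as a pointwise logarithmic-gradient bound; once that is done, both inequalities follow by elementary path integration in the annulus, with no appeal to elliptic theory or rescaling.

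First I would observe that the second summand inside the bracket of (\ref{est: for Harnack}) is exactly $|\nabla\ln u_\alpha|^{(n-2)/2}$. Dropping the $u_\alpha$ contribution and raising to the power $2/(n-2)$ gives, on all of $B_{x_\alpha}(7\rho_\alpha)\setminus\{x_\alpha\}$,
\begin{equation*}
|\nabla\ln u_\alpha(x)| \;\leq\; \frac{C_2^{2/(n-2)}}{d_g(x_\alpha,x)}.
\end{equation*}
Since $s_\alpha\leq\rho_\alpha$ we have $\Omega_\alpha\subset B_{x_\alpha}(7\rho_\alpha)$, and on $\Omega_\alpha$ the distance $d_g(x_\alpha,x)$ is bounded below by $s_\alpha/6$. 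Hence $|\nabla\ln u_\alpha|\leq C/s_\alpha$ on $\Omega_\alpha$, with $C=C(n,C_2)$.

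Second, I would verify that $\Omega_\alpha$ is path-connected by curves of controlled Riemannian length. It suffices to treat the case where $s_\alpha$ is smaller than a fixed fraction of $i_g(M)$, so that the exponential chart at $x_\alpha$ is a diffeomorphism on $B_0(7s_\alpha)$ and $\exp_{x_\alpha}^*g$ is uniformly comparable with the Euclidean metric; the complementary case reduces to a standard finite covering of a fixed-size region. In these normal coordinates $\Omega_\alpha$ corresponds to the Euclidean shell $\{s_\alpha/6<|x|<6s_\alpha\}$, which is connected for $n\geq 3$, and any two of its points can be joined inside $\Omega_\alpha$ by concatenating two radial segments with an arc on the intermediate sphere of radius $s_\alpha$, producing a path of Riemannian length at most $C's_\alpha$ with $C'$ depending only on $n$ and $M$. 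Integrating the log-gradient bound along such a path $\gamma$ from $x$ to $y$ in $\Omega_\alpha$ yields
\begin{equation*}
\bigl|\ln u_\alpha(y)-\ln u_\alpha(x)\bigr| \;\leq\; \int_\gamma |\nabla\ln u_\alpha|\,ds \;\leq\; \frac{C}{s_\alpha}\cdot C's_\alpha \;=\; CC'.
\end{equation*}
Exponentiating gives $\sup_{\Omega_\alpha} u_\alpha \leq e^{CC'}\inf_{\Omega_\alpha} u_\alpha$, the second inequality of the claim. For the first, at any $x\in\Omega_\alpha$ I would simply write $|\nabla u_\alpha(x)|=u_\alpha(x)|\nabla\ln u_\alpha(x)|\leq (C/s_\alpha)\sup_{\Omega_\alpha}u_\alpha$ and multiply by $s_\alpha$; absorbing all constants into a single $C_3>1$ finishes both estimates.

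I do not anticipate a serious obstacle: the delicate analytic work was front-loaded into Lemma \ref{lem: former lemma 2}, and the present lemma is a purely metric consequence of the log-gradient bound there established. The only item requiring attention is that the final $C_3$ depends solely on $n$, $C_2$, and the geometry of $M$ (through $i_g(M)$), hence is uniform in $\alpha$; this is automatic because every ingredient above operates at the intrinsic scale $s_\alpha$ and the metric comparison in exponential charts is uniform on $M$.
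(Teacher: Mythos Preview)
Your proposal is correct and follows essentially the same argument as the paper: both extract the pointwise bound $|\nabla\ln u_\alpha|\leq C/d_g(x_\alpha,\cdot)$ from the second summand of (\ref{est: for Harnack}), use $d_g(x_\alpha,\cdot)\geq s_\alpha/6$ on the annulus, and then integrate the log-gradient along a path of length $O(s_\alpha)$ in $\Omega_\alpha$ to obtain the Harnack inequality, with the gradient bound coming directly from $|\nabla u_\alpha|=u_\alpha|\nabla\ln u_\alpha|$. Your treatment of the path length and the constant $C_2^{2/(n-2)}$ is in fact more careful than the paper's, which absorbs everything into $C_2$ and simply asserts the existence of a connecting curve of length at most $7s_\alpha$.
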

\begin{proof}[Proof of Lemma \ref{lem: former lemma 3}:]
Estimate (\ref{est: for Harnack}) implies that
\begin{equation}\label{e1}
\left|\frac{\nabla u_\alpha(x)}{u_\alpha(x)}\right|\leq C_2 d_g(x_\alpha,x)^{-1}
\end{equation}
in $\Omega_\alpha,$ and therefore
\begin{equation}\label{e2}
s_\alpha|\nabla\ln u_\alpha(x)|\leq 6C_2
\end{equation}
in $\Omega_\alpha$. Taking $C_3\geq 6C_2$, we get the first inequality from (\ref{e1}).
Then, from (\ref{e2}) and from the fact that the domain is an annulus  $\Omega_\alpha=B_{x_\alpha}(6s_\alpha)\backslash B_{x_\alpha}(\frac{1}{6}s_\alpha)$, we estimate that
$$\sup_{\Omega_\alpha}\ln u_\alpha- \inf_{\Omega_\alpha}\ln u_\alpha\leq l_\alpha(\Omega_\alpha)||\nabla \ln u_\alpha||_{L^\infty(\Omega_\alpha)}\leq 42C_2,$$
where $l_\alpha(\Omega_\alpha)$ is the infimum of the length of a curve in $\Omega_\alpha$ drawn between a maximum and a minimum of $u_\alpha$.  Equivalently
$$\sup_{\Omega_\alpha} u_\alpha\leq e^{42C_2}\inf_{\Omega_\alpha} u_\alpha,$$
so it suffices to take $C_3=e^{42C_2}$.
\end{proof}
\subsection{Local blow-up analysis}
In order to show that $u_\alpha$ is bounded in $\mathcal{C}^1$, we define a \textit{blow-up sequence} $(x_\alpha)_\alpha$ with $(\rho_\alpha)_\alpha$ as follows: let $(x_\alpha)_\alpha$ be critical points of $(u_\alpha)_\alpha$ and $(\rho_\alpha)_\alpha$ positive numbers such that they verify the following three conditions:
\begin{equation}\label{est: blow up no 1}
    0<\rho_\alpha<\frac{1}{7}i_g(M),
\end{equation}
\begin{equation}\label{est: blow up no 2}
    \rho_\alpha^{\frac{n-2}{2}}\sup_{B_{x_\alpha}(6\rho_\alpha)}u_\alpha\to\infty,
\end{equation}
and
\begin{equation}\label{est: blow up no 3}
    d_g(x_\alpha,x)^{\frac{n-2}{2}}\left[u_\alpha(x)+\left|\frac{\nabla u_\alpha(x)}{u_\alpha(x)}\right|^{\frac{n-2}{2}}\right]\leq C_2\quad\forall x\in B_{x_\alpha}(7\rho_\alpha),
\end{equation}
where $C_2$ is a constant. In the rest of the section, we denote 
$$\mu_\alpha^{1-\frac{n}{2}}:=u_\alpha(x_\alpha).$$
\begin{rem}
The limit as $\alpha\to \infty$ of a blow-up sequence is a concentration point, as seen from (\ref{est: blow up no 2}).
\end{rem}
\begin{rem}
Any sequence $(x_\alpha)_\alpha\subset (S_\alpha)_\alpha$ qualifies as a blow-up sequence as soon as (\ref{est: blow up no 2}) is verified. In this case, $(\rho_\alpha)_\alpha$ can be chosen as $$\rho_\alpha:=\min\left(\frac{1}{7}i_g(M),\frac{1}{2}\min_{1\leq i<j\leq N_\alpha}d_g(x_{i,\alpha},x_{j,\alpha})\right)$$ and $C_2=C_1$.
\end{rem}
Given any blow-up sequence, the following proposition gathers the central results of our local analysis for the reader's convenience: namely, it states the exact asymptotic profile of $(u_\alpha)_\alpha$ at distance $(\rho_\alpha)_\alpha$ of $(x_\alpha)_\alpha$ and it gives sharp pointwise asymptotic estimates on balls of radius $\rho_\alpha$. This is the result we shall point to whenever we want to describe the local asymptotic behaviour of $(u_\alpha)_\alpha$ around a concentration point corresponding to local maximum points $x_\alpha$. Note that in this case $\nabla u_\alpha(x_\alpha)=0$. 
\begin{prop}\label{prop}
Let $(x_\alpha)_\alpha$ and $(\rho_\alpha)_\alpha$ be a blow-up sequence. 
Then there exists $C_4>0$ such that
\begin{equation}
    u_\alpha(x)+d_g(x_\alpha,x)|\nabla u_\alpha(x)|\leq C_4\mu^{\frac{n-2}{2}}_\alpha d_g(x_\alpha,x)^{2-n}
\end{equation}
for all $x\in B_{x_\alpha}(6\rho_\alpha)\backslash\{x_\alpha\}.$ Moreover, we see that up to a subsequence, the asymptotic profile of $(u_\alpha)_\alpha$ is
\begin{equation}
    u_\alpha(x_\alpha)\rho^{n-2}_\alpha u_\alpha(\exp_\alpha(\rho_\alpha x))\to\frac{R_0^{n-2}}{|x|^{n-2}}+H(x)
\end{equation}
in $C^2_{loc}(B_0(5)\backslash\{0\})$, where $H$ is some harmonic function in $B_0(5)$ satisfying $H(0)=0.$ Here $R_0^2=\frac{n(n-2)}{f(x_0)}$ where $x_0=\lim_{\alpha\to\infty} x_\alpha$.
\end{prop}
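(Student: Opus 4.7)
The plan is a standard two-scale blow-up analysis: the fast scale $\mu_\alpha$ describes the local behavior around the critical point $x_\alpha$ and yields the Aubin-Talenti bubble, while the slow scale $\rho_\alpha$ describes the behavior on the boundary of the blow-up region and yields the harmonic correction $H$. The sharp pointwise estimate in the first assertion is the bridge between the two scales. Throughout, the lower bound $u_\alpha \geq \varepsilon$ from (\ref{varepsilon}) is crucial in controlling the non-compact gradient term.

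For the pointwise decay, I would proceed by contradiction. Suppose there exists $y_\alpha \in B_{x_\alpha}(6\rho_\alpha)\setminus\{x_\alpha\}$ attaining the supremum of $d_g(x_\alpha,\cdot)^{n-2}u_\alpha/\mu_\alpha^{(n-2)/2}$ on this ball and that this supremum tends to infinity. Set $\tilde\nu_\alpha^{1-n/2} := u_\alpha(y_\alpha) + |\nabla u_\alpha(y_\alpha)/u_\alpha(y_\alpha)|^{(n-2)/2}$ and consider $\tilde v_\alpha(x) := \tilde\nu_\alpha^{(n-2)/2} u_\alpha(\exp_{y_\alpha}(\tilde\nu_\alpha x))$. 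The blow-up condition (\ref{est: blow up no 3}) ensures that $d_g(y_\alpha,x_\alpha)/\tilde\nu_\alpha \to \infty$, so $\tilde v_\alpha$ is defined on balls of growing radius, and the argument of Lemma \ref{lem: former lemma 2} carries over with the gradient correction absorbed by the Harnack-type inequality of Lemma \ref{lem: former lemma 3}. Either $u_\alpha(y_\alpha) \to \infty$ and $\tilde v_\alpha$ converges to a Caffarelli-Gidas-Spruck bubble, producing a new critical point near $y_\alpha$ at distance $O(\tilde\nu_\alpha)$, which would contradict (\ref{est: add fact}); or $u_\alpha(y_\alpha) = O(1)$ and the Liouville-type argument based on Lemma \ref{lem: A1} applied to a suitable negative power of the limit forces $\nabla \tilde v_\infty \equiv 0$, contradicting the choice of $\tilde\nu_\alpha$. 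The gradient half of the estimate then follows by applying Lemma \ref{lem: former lemma 3} on the annulus $B_{x_\alpha}(2d)\setminus B_{x_\alpha}(d/2)$ with $d = d_g(x_\alpha,x)$.

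For the asymptotic profile, set $v_\alpha(x) := u_\alpha(x_\alpha)\rho_\alpha^{n-2} u_\alpha(\exp_{x_\alpha}(\rho_\alpha x))$. The pointwise estimate just obtained yields $v_\alpha(x) \leq C_4|x|^{2-n}$ on $B_0(5)\setminus\{0\}$, so $v_\alpha$ is locally bounded away from the origin. The ratio $(\rho_\alpha/\mu_\alpha)^{(n-2)/2} = u_\alpha(x_\alpha)\rho_\alpha^{(n-2)/2}$ tends to infinity by (\ref{est: blow up no 2}), hence $\mu_\alpha/\rho_\alpha \to 0$. Multiplying equation (\ref{eq: EL alpha}) by $u_\alpha(x_\alpha)\rho_\alpha^n$, a direct calculation shows that each nonlinear term carries a positive power of $\mu_\alpha/\rho_\alpha$ or of $\rho_\alpha$, and therefore vanishes in $C^0_{loc}(B_0(5)\setminus\{0\})$; the potentially dangerous gradient term $\langle \nabla u_\alpha,Y_\alpha\rangle^2/u_\alpha^{q+3}$ is tamed by (\ref{varepsilon}) together with the gradient half of the pointwise estimate. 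Standard elliptic theory then yields $v_\alpha \to v_\infty$ in $C^2_{loc}(B_0(5)\setminus\{0\})$ with $v_\infty$ harmonic away from $0$ and $v_\infty(x) \leq C_4|x|^{2-n}$. The classical classification of nonnegative harmonic functions with an isolated singularity gives $v_\infty(x) = A|x|^{2-n} + H(x)$ with $H$ harmonic in $B_0(5)$ and $A \geq 0$.

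The coefficient $A$ and the normalization $H(0) = 0$ are identified by matching with the fast-scale profile. The rescaling $\tilde u_\alpha(x) := \mu_\alpha^{(n-2)/2}u_\alpha(\exp_{x_\alpha}(\mu_\alpha x))$ converges, by the first-case argument of Lemma \ref{lem: former lemma 2}, to the standard bubble $U(x) = (1 + f(x_0)|x|^2/(n(n-2)))^{1-n/2}$, which satisfies $U(x) \sim R_0^{n-2}|x|^{2-n}$ as $|x| \to \infty$ with $R_0^2 = n(n-2)/f(x_0)$. In the intermediate region $\mu_\alpha \ll |y|\rho_\alpha \ll \rho_\alpha$, matching these far-field asymptotics with $v_\alpha(y)$ forces $A = R_0^{n-2}$ and $H(0) = 0$. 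The main obstacle is, as expected, the gradient nonlinearity $\langle \nabla u_\alpha, Y_\alpha\rangle^2/u_\alpha^{q+3}$: without the pointwise gradient bound from the first step one cannot pass to a harmonic limit at the slow scale, and without the positive lower bound on $u_\alpha$ from (\ref{varepsilon}) the Liouville step at the fast scale would fail.
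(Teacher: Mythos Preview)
Your proposal has two genuine gaps.

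For the pointwise decay, your contradiction argument contains an error: condition (\ref{est: blow up no 3}) gives precisely $d_g(x_\alpha,y_\alpha)^{(n-2)/2}\tilde\nu_\alpha^{\,1-n/2}\le C_2$, i.e.\ $d_g(x_\alpha,y_\alpha)/\tilde\nu_\alpha$ is \emph{bounded}, not tending to infinity. So when you rescale at $y_\alpha$ with scale $\tilde\nu_\alpha$, the critical point $x_\alpha$ sits at bounded distance in the new coordinates, and the Caffarelli--Gidas--Spruck limit, if it exists, is the bubble already described by Lemma~\ref{lem: former lemma 4}; no new critical point is produced and there is no contradiction with (\ref{est: add fact}) (which in any case concerns $\mathcal S_\alpha$, not an arbitrary blow-up sequence). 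The paper proceeds quite differently: it introduces an intermediate scale $r_\alpha\le\rho_\alpha$ (the largest radius on which the spherical average $r^{(n-2)/2}\varphi_\alpha(r)$ is non-increasing), obtains a weak estimate on $B_{x_\alpha}(6r_\alpha)$ by comparison with powers of the Green function (Lemma~\ref{lem: former lemma 5}), bootstraps it to the sharp bound via the Green representation formula (Lemma~\ref{lem: former lemma 6}), and only at the very end proves $r_\alpha=\rho_\alpha$. A by-product of Lemma~\ref{lem: former lemma 6} is $r_\alpha^2=O(\mu_\alpha)$, which forces $\rho_\alpha\to 0$; you need this for the slow-scale limit to be harmonic (for instance the rescaled contribution of $a_\alpha u_\alpha^{-q-1}$ is of order $\mu_\alpha^{1-n/2}\rho_\alpha^{\,n}$, which does not vanish unless $\rho_\alpha\to 0$).

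More seriously, your identification $H(0)=0$ by ``matching'' is not a proof. The pointwise estimate is only an upper bound; you have no lower bound or sharp asymptotic for $u_\alpha$ in the region $\mu_\alpha\ll d_g(x_\alpha,\cdot)\ll\rho_\alpha$, so the constant term of the slow-scale limit cannot be read off from the bubble's far field. The paper obtains $H(0)=0$ from a Pohozaev identity (integrating $\bigl(\nabla u_\alpha(X_\alpha)+\tfrac{n-2}{2}u_\alpha\bigr)\Delta_g u_\alpha$ with $X_\alpha=\tfrac12\nabla d_g(x_\alpha,\cdot)^2$ over a small ball), and it is precisely in these Pohozaev error estimates that the restriction $3\le n\le 5$ and the relation $r_\alpha^2=O(\mu_\alpha)$ are used. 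Your matching argument, if it worked, would be dimension-free; that should be a warning sign.
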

The proof of this proposition is the subject of this section. It will follow from Lemma \ref{lem: former lemma 6} and Lemma \ref{lem: former lemma 7} below. We first describe the asymptotic profile at distance $(\mu_\alpha)_\alpha$ of $(x_\alpha)_\alpha$ as $\alpha\to\infty$ of any blow-up sequence.
\begin{lemma}\label{lem: former lemma 4}
Let $(x_\alpha)_\alpha$ with $(\rho_\alpha)_\alpha$ be a blow-up sequence. We have
\begin{equation}\label{est: estimate of old lemma 4}
    \mu_\alpha^{\frac{n-2}{2}}u_\alpha\left(\exp_{x_\alpha}(\mu_\alpha x)\right)\to\left(1+\frac{f(x_0)|x|^2}{n(n-2)}\right)^{1-\frac{n}{2}}
\end{equation}
in $\mathcal{C}^1_{loc}(\R^n)$ as $\alpha\to\infty$, with $\mu^{1-\frac{n}{2}}_\alpha:=u_\alpha(x_\alpha)$ and $x_0:=\lim_{\alpha\to\infty}x_\alpha$, up to a subsequence.
\end{lemma}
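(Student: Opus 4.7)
The plan is to blow up equation (\ref{eq: EL alpha}) around $x_\alpha$ at the scale $\mu_\alpha$, extract a $\mathcal{C}^1_{loc}$ limit on $\mathbb{R}^n$, and identify it via the Caffarelli--Gidas--Spruck classification.

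First I verify that $\mu_\alpha \to 0$, i.e.\ $u_\alpha(x_\alpha) \to \infty$. Let $y_\alpha$ realize $\sup_{B_{x_\alpha}(6\rho_\alpha)} u_\alpha$. Since $u_\alpha(y_\alpha) \geq u_\alpha(x_\alpha) = \mu_\alpha^{1-n/2}$, the pointwise control (\ref{est: blow up no 3}) forces $d_g(x_\alpha, y_\alpha) = O(\mu_\alpha)$, hence $u_\alpha(y_\alpha) = O(\mu_\alpha^{-(n-2)/2})$. Combined with (\ref{est: blow up no 2}) this yields $(\rho_\alpha/\mu_\alpha)^{(n-2)/2} \to \infty$, and since $\rho_\alpha$ is bounded, $\mu_\alpha \to 0$.

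I then set $\tilde u_\alpha(x) := \mu_\alpha^{(n-2)/2} u_\alpha(\exp_{x_\alpha}(\mu_\alpha x))$ on $\Omega_\alpha := B_0(\rho_\alpha/\mu_\alpha)$, together with the pulled-back metric $g_\alpha$. Then $\tilde u_\alpha(0)=1$, $\nabla \tilde u_\alpha(0)=0$ (since $x_\alpha$ is a critical point), and $g_\alpha \to \xi$ in $\mathcal{C}^2_{loc}(\mathbb{R}^n)$. Multiplying (\ref{eq: EL alpha}) by $\mu_\alpha^{(n+2)/2}$ and using $u_\alpha = \mu_\alpha^{(2-n)/2}\tilde u_\alpha$ and $\nabla u_\alpha = \mu_\alpha^{-n/2}\nabla \tilde u_\alpha$, the critical term $f_\alpha u_\alpha^{q-1}$ scales with factor $1$ while every remaining term acquires a strictly positive power of $\mu_\alpha$ (the smallest being $\mu_\alpha^2$ coming from $h_\alpha u_\alpha$; the gradient terms $\langle\nabla u_\alpha, Y_\alpha\rangle^2 u_\alpha^{-q-3}$ and $c_\alpha\langle\nabla u_\alpha, Y_\alpha\rangle(d_\alpha u_\alpha^{-2} + u_\alpha^{-q-2})$ pick up $\mu_\alpha^{2n-2}$ and $\mu_\alpha^{n-1}$ respectively). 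Using the uniform $\mathcal{C}^{0,\alpha}$-bounds from $\mathcal{E}_{\theta,T}$ together with the lower bound (\ref{varepsilon}), the remainder tends to $0$ uniformly on bounded subsets of $\mathbb{R}^n$, so the limit equation reads $\Delta \tilde u_\infty = f(x_0)\, \tilde u_\infty^{q-1}$.

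For the uniform $\mathcal{C}^1$ bound on $(\tilde u_\alpha)$, the rescaled version of (\ref{est: blow up no 3}) reads $\tilde u_\alpha(x) \leq C_2 |x|^{-(n-2)/2}$ and $|\nabla \tilde u_\alpha(x)|/\tilde u_\alpha(x) \leq C|x|^{-1}$ for $x \neq 0$, giving $\mathcal{C}^0$-control outside any fixed neighbourhood of the origin. Near $0$ I use the Harnack inequality of Lemma \ref{lem: former lemma 3} at scales $s_\alpha = 2^{-k}\mu_\alpha$ anchored by $\tilde u_\alpha(0) = 1$ to propagate an $L^\infty$ bound all the way to the origin, and standard elliptic regularity upgrades this to a $\mathcal{C}^{1,\eta}_{loc}$ bound. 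After extraction, $\tilde u_\alpha \to \tilde u_\infty$ in $\mathcal{C}^1_{loc}(\mathbb{R}^n)$; the limit is nonnegative, strictly positive by the strong maximum principle (since $\tilde u_\infty(0) = 1$), has $0$ as a critical point of value $1$, and vanishes at infinity by the pointwise bound. The Caffarelli--Gidas--Spruck classification then pins $\tilde u_\infty$ uniquely as $\bigl(1+f(x_0)|x|^2/(n(n-2))\bigr)^{1-n/2}$, as claimed.

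The main obstacle I anticipate is establishing the $\mathcal{C}^1$-compactness of $(\tilde u_\alpha)$ up to $x=0$: the pointwise estimate (\ref{est: blow up no 3}) is degenerate at $x_\alpha$, so boundedness cannot be read off from it alone, and the presence of the gradient nonlinearities forces one to combine Harnack estimates at all dyadic scales below $\mu_\alpha$ with the critical-point normalization $\tilde u_\alpha(0) = 1$ rather than appealing to linear elliptic theory directly.
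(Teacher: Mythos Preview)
Your approach of rescaling directly at the scale $\mu_\alpha$ runs into the very obstacle you flag at the end, and the fix you propose does not work. The estimate (\ref{est: blow up no 3}) gives $\tilde u_\alpha(x)\le C_2|x|^{-(n-2)/2}$ and $|\nabla\ln\tilde u_\alpha(x)|\le C|x|^{-1}$, both of which degenerate at $0$; the Harnack inequality of Lemma~\ref{lem: former lemma 3} only compares values on annuli $[s/6,6s]$, and the center $0$ lies in none of them, so ``anchoring by $\tilde u_\alpha(0)=1$'' is impossible. Chaining dyadic annuli from a fixed radius inward costs a factor $C_3^2$ per step and requires infinitely many steps to reach $0$, so no uniform $L^\infty$ bound on $B_0(1)$ follows. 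The same gap appears earlier: from $d_g(x_\alpha,y_\alpha)=O(\mu_\alpha)$ you cannot deduce $u_\alpha(y_\alpha)=O(\mu_\alpha^{-(n-2)/2})$, since (\ref{est: blow up no 3}) gives no upper bound when $d_g(x_\alpha,y_\alpha)=o(\mu_\alpha)$; consequently your argument for $\rho_\alpha/\mu_\alpha\to\infty$ is incomplete. This also undermines the claim that the remainder in the rescaled equation tends to zero: writing the quadratic gradient term as $\langle\nabla\tilde u_\alpha,Y_\alpha\rangle^2/(\tilde u_\alpha\,u_\alpha^{q+2})$, the factor $u_\alpha^{-(q+2)}$ is controlled by (\ref{varepsilon}), but $|\nabla\tilde u_\alpha|^2/\tilde u_\alpha$ requires exactly the $\mathcal C^1$ control near $0$ that you have not established.

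The paper circumvents all of this by rescaling not at $\mu_\alpha$ but at the scale $\nu_\alpha$ defined by
\[
\nu_\alpha^{1-\frac{n}{2}}:=\sup_{B_{x_\alpha}(6\rho_\alpha)}\Bigl(u_\alpha+\bigl|\nabla u_\alpha/u_\alpha\bigr|^{\frac{n-2}{2}}\Bigr),
\]
so that the rescaled function $v_\alpha$ satisfies $v_\alpha+|\nabla v_\alpha/v_\alpha|^{(n-2)/2}\le 1$ \emph{everywhere} by construction, and $\rho_\alpha/\nu_\alpha\to\infty$ follows directly from (\ref{est: blow up no 2}). Elliptic theory then gives a limit $U$ solving $\Delta U=f(x_0)U^{q-1}$ on $\R^n$; by Caffarelli--Gidas--Spruck, $U$ is a bubble centered at some $\tilde y_0$. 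Since $x_\alpha$ is a critical point of $u_\alpha$, $0$ is a critical point of $U$, and the bubble has a unique critical point, so $\tilde y_0=0$. In particular $U(0)=1$, which forces $\nu_\alpha^{(n-2)/2}u_\alpha(x_\alpha)\to 1$, i.e.\ $\mu_\alpha/\nu_\alpha\to 1$, and the statement at scale $\mu_\alpha$ follows.
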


\begin{proof}[Proof of Lemma \ref{lem: former lemma 4}:]
The proof involves similar arguments to the ones used for Lemma $\ref{lem: former lemma 2}$. Let $y_\alpha\in B_{x_\alpha}(6\rho_\alpha)$ be such that
$$
    u_\alpha(y_\alpha)+\left|\frac{\nabla u_\alpha(y_\alpha)}{u_\alpha(y_\alpha)}\right|^{\frac{n-2}{2}}=\sup_{B_{x_\alpha}(6\rho_\alpha)}\left(u_\alpha(x)+\left|\frac{\nabla u_\alpha(x)}{u_\alpha(x)}\right|^{\frac{n-2}{2}}\right)
$$
and let
$$
    \nu_\alpha^{1-\frac{n}{2}}:=u_\alpha(y_\alpha)+\left|\frac{\nabla u_\alpha(y_\alpha)}{u_\alpha(y_\alpha)}\right|^{\frac{n-2}{2}}.
$$
Conditions (\ref{est: blow up no 2}) and (\ref{est: blow up no 3}) imply that
$$
    \frac{\rho_\alpha}{\nu_\alpha}\to \infty
$$
and
$$
    d_g(x_\alpha,y_\alpha)\leq C_2^{\frac{2}{n-2}}\nu_\alpha.
$$
It follows that the coordinates of $y_\alpha$ in the exponential chart around $x_\alpha$ defined as $\tilde{y}_\alpha:=\nu^{-1}\exp^{-1}_{x_\alpha}(y_\alpha)$ are bounded by $C_2^{\frac{2}{n-2}}$. Up to a subsequence, we may choose a finite limit $\tilde{y}_0:=\lim_{\alpha\to\infty}\tilde{y}_\alpha$. We denote 
$$v_\alpha(x)=\nu_\alpha^{\frac{n-2}{2}}u_\alpha\left(\exp_{x_\alpha}(\nu_\alpha x)\right)\quad\text{ and}\quad g_\alpha(x)=\left(\exp^*_{x_\alpha}g\right)\left(\exp_{x_\alpha}(\nu_\alpha x)\right)$$
for $\displaystyle x\in\Omega_\alpha:=B_0\left(\frac{\rho_\alpha}{\nu_\alpha}\right).$ As before, $g_\alpha\to\xi$ in $\mathcal{C}^2_{loc},$ $v_\alpha=O(1),$ and $\left|\frac{\nabla v_\alpha}{v_\alpha}\right|=O(1)$. By applying the same analysis as in the proof of Lemma $\ref{lem: former lemma 2}$, we get that, up to passing to a subsequence, there exists $U:=\lim_{\alpha\to\infty}v_\alpha$ in $\mathcal{C}^1_{loc}(\R^n)$, with $x_0:=\lim_{\alpha\to\infty}x_\alpha$,
$$
    \Delta U= f(x_0) U^{q-1}.
$$
where
$$U(x)=\left(1+\frac{f(x_0)|x-\tilde{y_0}|^2}{n(n-2)}\right)^{1-\frac{n}{2}}.$$
We know that $x_\alpha$ are local maxima for $u_\alpha$, so both $0$ and $\tilde{y}_0$ are maxima of $U$. However, since $U$ admits a unique maximum, we conclude that $\tilde{y}_0=0$.\end{proof}
We recall the aim of this section is to show that concentration points do not exist for the system (\ref{systemshort}). So far, we have obtained a pointwise estimate (\ref{est: initial estimate}) that holds everywhere on $M$ and an asymptotic profile in the neighbourhood of $x_\alpha$, a blow-up sequence; we aim to also find estimates around $x_\alpha$. We defined $(\rho_\alpha)_\alpha$ as the quantity describing the sphere of dominance of the blow-up sequence $(x_\alpha)_\alpha$. However, the influence of other blow-up sequences may be felt earlier. Let $\varphi_\alpha : (0, \rho_\alpha)\to \R^+$ be the average of $u_\alpha$ defined as
$$
    \varphi_\alpha(r) :=\frac{1}{|\partial B_{x_\alpha}(r)|_g}\int_{\partial B_{x_\alpha}(r)} u_\alpha  d\sigma_g.
$$
It follows from Lemma \ref{lem: former lemma 4} that
\begin{equation}\label{est: former lemma 4 but for averages}
    (\mu_\alpha r)^{\frac{n-2}{2}}\varphi_\alpha(\mu_\alpha r)\to r^{\frac{n-2}{2}}\left(1+\frac{f(x_0) r^2}{n(n-2)}\right)^{1-\frac{n}{2}}
\end{equation}
in $\mathcal{C}^1_{loc}([0,+\infty))$. We define
\begin{equation}\label{def: r alpha}
    r_\alpha:=\sup_{r\in(2R_0\mu_\alpha,\rho_\alpha)}\left\{s^{\frac{n-2}{2}}\varphi_\alpha(s)\text{ is non-increasing in } (2R_0\mu_\alpha, r)\right\}
\end{equation}
with 
$$R_0^2:=\frac{n(n-2)}{f(x_0)}.$$
Note that
$$
    \text{ if }r_\alpha<\rho_\alpha, \quad\text{ then } \left(r^{\frac{n-2}{2}}\varphi_\alpha(r)\right)'(r_\alpha)=0.
$$
Since $r^{\frac{n-2}{2}}U$ is non-increasing in $[2R_0,\infty)$, then (\ref{est: former lemma 4 but for averages}) implies
\begin{equation}\label{ess: between r alpha and mu alpha}
    \frac{r_\alpha}{\mu_\alpha}\to\infty.
\end{equation}
so $\mu_\alpha=o(r_\alpha)$.
\begin{rem}
Considering that the asymptotic profile of blow-up sequences $(x_\alpha)_\alpha$, which is a bump function with a unique maximum, the quantity $r_\alpha$ is an indicator of the beginning of the influence of neighbouring blow-up sequences within the sphere of dominance, as the average of $u_\alpha$ is no longer decreasing.
\end{rem}
Let 
\begin{equation}\label{def: eta}
\eta_\alpha:=\sup_{B_{x_\alpha}(6r_\alpha)\backslash B_{x_\alpha}(\frac{1}{6}r_\alpha)}u_\alpha.
\end{equation} 
Note that, by Lemma \ref{lem: former lemma 3},
$$\frac{1}{C_3}\sup_{B_{x_\alpha}(6s_\alpha)\backslash B_{x_\alpha}\left(\frac{1}{6}s_\alpha\right)}u_\alpha\leq\varphi_\alpha(s_\alpha)\leq C_3\inf_{B_{x_\alpha}(6s_\alpha)\backslash B_{x_\alpha}\left(\frac{1}{6}s_\alpha\right)}u_\alpha$$
for $0<s_\alpha\leq r_\alpha$ and all $\alpha$. By (\ref{est: former lemma 4 but for averages}), we obtain the estimate
\begin{equation}\label{est: using the harnack}
\lim_{R\to\infty}\limsup_{\alpha\to\infty}\sup_{B_{x_\alpha}(6r_\alpha)\backslash B_{x_\alpha}(R\mu_\alpha)}d_g(x_\alpha,x)^{\frac{n-2}{2}}u_\alpha=0.
\end{equation}
Thus,
\begin{equation}\label{est: rel size to max}
    r_\alpha^2\eta^{q-2}_\alpha\to 0
\end{equation}
as $\alpha\to\infty.$ It is important to note that this implies that
\begin{equation}\label{ess: ok for former lemma 5}
r_\alpha\to 0
\end{equation}
as $\alpha\to\infty$ since $u_\alpha\geq\varepsilon$ by (\ref{varepsilon}). We now prove a pointwise asymptotic estimate for $u_\alpha$ in $B_{x_\alpha}(6r_\alpha)\backslash\{x_\alpha\}.$
\begin{lemma}\label{lem: former lemma 5}
Let $(x_\alpha)_\alpha$ with $(\rho_\alpha)_\alpha$ be a blow-up sequence. Then, for any $0<\varepsilon<\frac{1}{2},$ there exists $C_\varepsilon>0$ such that
\begin{equation*}
    u_\alpha(x)\leq C_\varepsilon\left(\mu_\alpha^{\frac{n-2}{2}(1-2\varepsilon)}d_g(x_\alpha,x)^{(n-2)(1-\varepsilon)}+\eta_\alpha\left(\frac{r_\alpha}{d_g(x_\alpha,x)}\right)^{(n-2)\varepsilon}\right)
\end{equation*}
for all $x\in B_{x_\alpha}(6r_\alpha)\backslash\{x_\alpha\}.$
\end{lemma}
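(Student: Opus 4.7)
The plan is to adapt the comparison argument of Druet--Hebey \cite{DruHeb} to our setting with gradient non-linearities. The right-hand side of the desired inequality has two pieces: the first reproduces the ``standard bubble'' profile of Lemma \ref{lem: former lemma 4} at scale $\mu_\alpha$, and the second captures the influence of neighbouring blow-up sequences, encoded in $\eta_\alpha$ at scale $r_\alpha$. I would accordingly build the barrier
\[
  \Phi_\alpha(x) = C_\varepsilon\,\mu_\alpha^{\frac{n-2}{2}(1-2\varepsilon)}\, d_g(x_\alpha,x)^{-(n-2)(1-\varepsilon)} + C_\varepsilon\,\eta_\alpha\!\left(\frac{r_\alpha}{d_g(x_\alpha,x)}\right)^{(n-2)\varepsilon},
\]
and prove, for $R$ and $C_\varepsilon$ large enough (independent of $\alpha$), that $u_\alpha\le\Phi_\alpha$ on the annulus $A_\alpha = B_{x_\alpha}(6r_\alpha)\setminus B_{x_\alpha}(R\mu_\alpha)$; the estimate on $B_{x_\alpha}(R\mu_\alpha)$ is then a direct consequence of Lemma \ref{lem: former lemma 4}. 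Boundary checks: on $\{d_g(x_\alpha,\cdot)=R\mu_\alpha\}$, Lemma \ref{lem: former lemma 4} yields $u_\alpha\lesssim \mu_\alpha^{(n-2)/2}R^{2-n}$, which is dominated by the first summand of $\Phi_\alpha$ once $R$ is large; on $\{d_g(x_\alpha,\cdot)=6r_\alpha\}$, Lemma \ref{lem: former lemma 3} together with the definition (\ref{def: eta}) of $\eta_\alpha$ gives $u_\alpha\le C\eta_\alpha$, matching the second summand.

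The interior comparison is the heart of the proof. Using the sign convention $\Delta_g=-\mathrm{div}_g\nabla$, one computes
\[
  \Delta_g\, d_g(x_\alpha,\cdot)^{-\beta} = \beta(n-2-\beta)\, d_g(x_\alpha,\cdot)^{-\beta-2} + O\!\bigl(d_g(x_\alpha,\cdot)^{-\beta-1}\bigr),
\]
where the error comes from the metric expansion. Since $0<\varepsilon<\frac{1}{2}$, both exponents $\beta\in\{(n-2)(1-\varepsilon),(n-2)\varepsilon\}$ lie in $(0,n-2)$, so the leading coefficient is positive. The key estimate (\ref{est: using the harnack}) gives $d_g(x_\alpha,x)^2 f_\alpha u_\alpha^{q-2}=o(1)$ uniformly on $A_\alpha$, which allows one to absorb the focusing term $f_\alpha u_\alpha^{q-1}$ from (\ref{eq: EL alpha}) into the positive leading piece of $\Delta_g\Phi_\alpha$. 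The remaining zero-order terms $h_\alpha u_\alpha$, $b_\alpha/u_\alpha$, and $a_\alpha/u_\alpha^{q+1}$ are $L^\infty$-controlled thanks to the lower bound (\ref{varepsilon}), and are similarly dominated by the leading contribution $d^{-\beta-2}$.

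The main obstacle is the gradient non-linearity, which has no analogue in \cite{DruHeb}. The pure square $-\langle\nabla u_\alpha,Y_\alpha\rangle^2/u_\alpha^{q+3}$ has a favorable sign for an upper bound and can simply be dropped. The mixed term $c_\alpha\langle\nabla u_\alpha,Y_\alpha\rangle(d_\alpha/u_\alpha^2+u_\alpha^{-q-2})$ is more delicate: combining the pointwise gradient control (\ref{est: blow up no 3}), which gives $|\nabla u_\alpha|\le C\,u_\alpha/d_g(x_\alpha,\cdot)$ on $A_\alpha$, with the lower bound (\ref{varepsilon}), it produces a drift-type contribution of order $d_g(x_\alpha,\cdot)^{-1}u_\alpha$ to $\Delta_g u_\alpha$, which is absorbed into the $O(d^{-\beta-1})$ correction term coming from the Laplacian of $\Phi_\alpha$. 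Rewriting the equation in the form $\Delta_g v + B(x)\!\cdot\!\nabla v + V(x) v \le 0$ for $v=u_\alpha-\Phi_\alpha$, with coefficients bounded on $A_\alpha$, the standard maximum principle then delivers $u_\alpha\le \Phi_\alpha$ on $A_\alpha$, which is the claimed estimate.
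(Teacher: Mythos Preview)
Your strategy is essentially the paper's: build a two-scale barrier, check it on the inner and outer boundaries of an annulus, and use a comparison in the interior, exploiting (\ref{est: using the harnack}) to kill the focusing term and the favorable sign of $-\langle\nabla u_\alpha,Y_\alpha\rangle^2/u_\alpha^{q+3}$ to handle the new gradient nonlinearity. The paper uses powers $G(x_\alpha,\cdot)^{1-\varepsilon}$ and $G(x_\alpha,\cdot)^{\varepsilon}$ of the Green function instead of powers of the distance, but in view of (\ref{ess: G on M}) this is the same object up to harmless errors.

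There is, however, a genuine gap in your final step. You propose to obtain $u_\alpha\le\Phi_\alpha$ from a linear inequality of the form $\Delta_g v + B\!\cdot\!\nabla v + V v\le 0$ for $v=u_\alpha-\Phi_\alpha$ with ``coefficients bounded''. But the barrier satisfies $\Delta_g\Phi_\alpha = \bigl((n-2)^2\varepsilon(1-\varepsilon)+o(1)\bigr)d^{-2}\Phi_\alpha$, so after subtracting, the effective zero-order coefficient multiplying $v$ is $o(d^{-2})-(n-2)^2\varepsilon(1-\varepsilon)d^{-2}$, which is \emph{negative} on $A_\alpha$; the standard maximum principle therefore does not apply to the difference. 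Concretely, at an interior maximum of $v$ your inequalities only yield $\Phi_\alpha(y_\alpha)\le o(1)\,v(y_\alpha)$, which says nothing about the sign of $v$.

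The correct implementation---and this is exactly what the paper does---is to work with the \emph{quotient}: pick $y_\alpha$ maximizing $u_\alpha/\Phi_\alpha$ on $\overline{B_{x_\alpha}(6r_\alpha)}\setminus\{x_\alpha\}$. If $y_\alpha$ is interior, then $\Delta_g u_\alpha(y_\alpha)/u_\alpha(y_\alpha)\ge \Delta_g\Phi_\alpha(y_\alpha)/\Phi_\alpha(y_\alpha)$, which together with $\Delta_g u_\alpha\le C u_\alpha^{q-1}$ (your own observation on the gradient sign, see (\ref{because of the gradient sign})) and (\ref{est: using the harnack}) gives $o(d^{-2})\ge (n-2)^2\varepsilon(1-\varepsilon)d^{-2}+o(d^{-2})$, a contradiction. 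Hence the maximum is attained either at scale $O(\mu_\alpha)$ (handled by Lemma~\ref{lem: former lemma 4}) or on the outer boundary (handled by the definition of $\eta_\alpha$), and $u_\alpha/\Phi_\alpha$ is uniformly bounded. Once you switch from the difference to the quotient, the rest of your argument goes through.
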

\begin{proof}[Proof of Lemma \ref{lem: former lemma 5}:]
Let $G$ be a Green function for the Laplace operator $\Delta_g$ on $M$ with $G> 0$. Recall the following estimates, that can be found in Aubin \cite{Au}:
\begin{equation}\label{ess: G on M}
\begin{array}{c}
    \left|d_g(x_\alpha,y)^{n-2}G(x,y)-\frac{1}{(n-2)\omega_{n-1}}\right|\leq\tau\left(d_g(x,y)\right)\\
        \left|d_g(x_\alpha,y)^{n-1}|\nabla G(x,y)|-\frac{1}{\omega_{n-1}}\right|\leq\tau\left(d_g(x,y)\right)
\end{array}
\end{equation}
where $\tau:\R^+\to \R^+$ is a continuous function satisfying $\tau(0)=0$.
For a fixed $\varepsilon$, let
\begin{equation}\label{def: uhm monster function i don't know it's late}
    \Phi^{\varepsilon}_\alpha(x):=\mu_\alpha^{\frac{n-2}{2}(1-2\varepsilon)}G(x_\alpha,x)^{1-\epsilon}+\eta_\alpha r_\alpha^{(n-2)\varepsilon}G(x_\alpha,x)^\varepsilon
\end{equation}
and let $y_\alpha\in\overline{B_{x_\alpha}(6 r_\alpha)}\backslash\{x_\alpha\}$ be such that
\begin{equation}\label{def of y}
    \sup_{B_{x_\alpha}(6r_\alpha)}\frac{u_\alpha}{\Phi_\alpha^{\varepsilon}}=\frac{u_\alpha(y_\alpha)}{\Phi_\alpha^\varepsilon(y_\alpha)},
\end{equation}
We continue by studying the following two cases, separately.
\par \textbf{First case:}
Assume that the relative size of $d_g(x_\alpha,y_\alpha)$ with respect to $\mu_\alpha$ is
\begin{equation}\label{ess: position of y}
    R:=\lim_{\alpha\to\infty}\frac{d_g(x_\alpha,y_\alpha)}{\mu_\alpha}\quad\text{with }R\in[0,\infty).
\end{equation}
Thanks to Lemma \ref{lem: former lemma 4},
\begin{equation*}
    \mu_\alpha^{\frac{n-2}{2}}u_\alpha(y_\alpha)=\left(1+\frac{R^2}{R_0^2}\right)^{1-\frac{n}{2}}+o(1),
\end{equation*}
so that, whenever $R\in[0,\infty)$, using (\ref{ess: between r alpha and mu alpha}), (\ref{ess: G on M}) and (\ref{ess: position of y}), it is easily shown that
$$\frac{u_\alpha(y_\alpha)}{\Phi_\alpha^\varepsilon(y_\alpha)}\to((n-2)\omega_{n-1})^{1-\varepsilon}R^{(n-2)(1-\varepsilon)}\left(1+\frac{R^2}{R_0^2}\right)^{1-\frac{n}{2}}$$
as $\alpha\to\infty.$
\par \textbf{Second case:} It remains to study the case 
$$\lim_{\alpha\to\infty}\frac{d_g(x_\alpha,y_\alpha)}{\mu_\alpha}\to\infty\quad\text{ as }\alpha\to\infty.$$ 
If $(y_\alpha)_\alpha$ sits on the outer boundary $\partial B_{x_\alpha}(6r_\alpha)$, then by (\ref{ess: ok for former lemma 5}), (\ref{ess: G on M}), and (\ref{def: uhm monster function i don't know it's late}),
$$\frac{u_\alpha(y_\alpha)}{\Phi^{\varepsilon}_\alpha(y_\alpha)}\leq \left(6^{n-2}(n-2)\omega_{n-1}\right)^\varepsilon + o(1).$$
Otherwise, if up to a subsequence $y_\alpha\in B_{x_\alpha}(6r_\alpha)$,
then
$$\frac{\Delta_{g}u_\alpha(y_\alpha)}{u_\alpha(y_\alpha)}\geq\frac{\Delta_g\Phi^\varepsilon_\alpha(y_\alpha)}{\Phi_\alpha^{\varepsilon}(y_\alpha)}$$
as a consequence of the fact that $y_\alpha$ is the maximum of $\displaystyle\frac{u_\alpha}{\Phi_\alpha^\varepsilon}$. On the other hand, taking note of the sign of the dominant gradient term in equations (\ref{eq: EL alpha}), we see that
\begin{equation}\label{because of the gradient sign}
\begin{array}{r l}
    \Delta_g u_\alpha&= -h_\alpha u_\alpha+f_\alpha u_\alpha^{q-1}+ \frac{a_\alpha}{u_\alpha^{q+1}}-\frac{b_\alpha}{u_\alpha}-\frac{\langle\nabla u_\alpha, Y_\alpha\rangle^2}{u_\alpha^{q+3}}\\
     &\quad-c_\alpha\langle\nabla u_\alpha,Y_\alpha\rangle\left[\frac{d_\alpha}{u_\alpha^2}+\frac{1}{u_\alpha^{q+2}}\right]\\
     &\leq C u_\alpha^{q-1},
\end{array}
\end{equation}
where $C$ is a constant depending on $\theta$ and $T$. Here we used (\ref{varepsilon}). Finally, thanks to (\ref{est: using the harnack}),
$$d_g(x_\alpha,y_\alpha)^2\frac{\Delta_g u_\alpha(y_\alpha)}{u_\alpha(y_\alpha)}\leq C d_g(x_\alpha,y_\alpha)^2 u^{q-2}_\alpha(y_\alpha)\to 0.$$
To conclude, (\ref{ess: ok for former lemma 5}) and (\ref{ess: G on M}) imply that
$$d_g(x_\alpha,y_\alpha)^2\frac{\Delta_g \Phi^\varepsilon_\alpha(y_\alpha)}{\Phi^\varepsilon_\alpha(y_\alpha)}=\varepsilon(1-\varepsilon)(n-2)^2+o(1).$$
We deduce that $u_\alpha(y_\alpha)=O(\Phi^\varepsilon_\alpha(y_\alpha)).$ The study of the previous two cases ends the proof of the lemma.
\end{proof}
The following lemma improves the estimate we've just obtained and gives a very important bound on the size of $r_\alpha$.
\begin{lemma}\label{lem: former lemma 6}
Let $(x_\alpha)_\alpha$ with $(\rho_\alpha)_\alpha$ be a blow-up sequence. Then there exists $C_4>0$ such that
\begin{equation}\label{est: for former lemma 6}
    u_\alpha(x)+d_g(x_\alpha,x)|\nabla u_\alpha(x)|\leq C_4\mu^{\frac{n-2}{2}}_\alpha \left(d_g(x_\alpha,x)+\mu_\alpha\right)^{2-n}
\end{equation}
for all $x\in B_{x_\alpha}(6r_\alpha)\backslash\{x_\alpha\}.$ Moreover, $r_\alpha^2=O(\mu_\alpha)$.
\end{lemma}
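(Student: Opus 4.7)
The plan is to proceed in two stages, mirroring the strategy of Druet--Hebey but adapting it to cope with the gradient non-linearities in (\ref{eq: EL alpha}). The first stage bootstraps the weak estimate of Lemma \ref{lem: former lemma 5} to the sharp pointwise bound (\ref{est: for former lemma 6}) through a Green's-function representation, and the second extracts the size condition $r_\alpha^2 = O(\mu_\alpha)$ from a Pohozaev-type identity on $B_{x_\alpha}(r_\alpha)$.

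For Stage~1, I would rewrite (\ref{eq: EL alpha}) as $\Delta_g u_\alpha = F_\alpha$, noting via (\ref{varepsilon}) and the argument already used in (\ref{because of the gradient sign}) that $|F_\alpha| \leq C\bigl(u_\alpha^{q-1}+1+|\nabla u_\alpha|^2+|\nabla u_\alpha|\bigr)$ on $B_{x_\alpha}(6r_\alpha)$, with $C=C(\theta,T)$. Then I would apply the representation formula $u_\alpha(x)=\overline{u}_\alpha+\int_M G(x,y)F_\alpha(y)\,dv_g(y)$, feed Lemma \ref{lem: former lemma 5} (with a fixed small $\varepsilon\in(0,1/2)$) into the right-hand side, and use the pointwise bounds (\ref{ess: G on M}) on $G$ and $\nabla G$. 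Splitting the integral into the three standard annular regions around $x_\alpha$, namely $d_g(x_\alpha,y)\leq d_g(x_\alpha,x)/2$, $d_g(x_\alpha,y)\sim d_g(x_\alpha,x)$, and $d_g(x_\alpha,y)\geq 2d_g(x_\alpha,x)$, each contribution is controlled by $C\mu_\alpha^{(n-2)/2}(d_g(x_\alpha,x)+\mu_\alpha)^{2-n}$. The gradient estimate follows by differentiating the representation under the integral; the extra $d_g(x,y)^{-1}$ factor in $|\nabla_x G|$ precisely accounts for the $d_g(x_\alpha,x)^{-1}$ in (\ref{est: for former lemma 6}).

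For Stage~2, I would multiply (\ref{eq: EL alpha}) by the Pohozaev vector field $(x-x_\alpha)^k\partial_k u_\alpha+\tfrac{n-2}{2}u_\alpha$ and integrate over $B_{x_\alpha}(r_\alpha)$. The critical term $f_\alpha u_\alpha^{q-1}$ contributes only a boundary piece by the conformal invariance of the identity, while the lower-order and gradient terms give subcritical interior integrals. Using the sharp bounds from Stage~1 together with the Harnack-type Lemma \ref{lem: former lemma 3}, all boundary integrals on $\partial B_{x_\alpha}(r_\alpha)$ are of order $O(\mu_\alpha^{n-2}r_\alpha^{2-n})$. The dominant interior contribution is $\int_{B_{x_\alpha}(r_\alpha)} h_\alpha u_\alpha^2\,dv_g$, which in the low dimensions $n\in\{3,4,5\}$ is comparable to $\mu_\alpha^2$ up to polynomial factors in $r_\alpha/\mu_\alpha$, as seen by rescaling at scale $\mu_\alpha$ and using the profile of Lemma \ref{lem: former lemma 4}. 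Balancing the two sides then forces the relation $r_\alpha^2=O(\mu_\alpha)$.

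The principal obstacle lies in Stage~1, where the gradient non-linearities $\langle\nabla u_\alpha,Y_\alpha\rangle^2/u_\alpha^{q+3}$ and $c_\alpha\langle\nabla u_\alpha,Y_\alpha\rangle(d_\alpha/u_\alpha^2+1/u_\alpha^{q+2})$ have no analogue in Druet--Hebey or Premoselli. Closing the Green-representation bootstrap therefore requires running the inequalities for $u_\alpha$ and for $d_g(x_\alpha,\cdot)|\nabla u_\alpha|$ simultaneously, and the uniform lower bound (\ref{varepsilon}) is essential to tame the negative powers of $u_\alpha$ that multiply the gradient terms.
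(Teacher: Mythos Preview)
Your plan has two genuine gaps, one in each stage.

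\textbf{Stage 1: the missing $\eta_\alpha$ step.} Feeding Lemma~\ref{lem: former lemma 5} into the Green representation will not directly yield the sharp bound (\ref{est: for former lemma 6}). The estimate of Lemma~\ref{lem: former lemma 5} has two pieces, and the second one, $\eta_\alpha(r_\alpha/d_g(x_\alpha,x))^{(n-2)\varepsilon}$, together with the boundary term of the representation on $\partial B_{x_\alpha}(6r_\alpha)$, leaves an $O(\eta_\alpha)$ error. What you actually get is
\[
u_\alpha(z_\alpha)=O\bigl(\mu_\alpha^{(n-2)/2}d_g(x_\alpha,z_\alpha)^{2-n}\bigr)+O(\eta_\alpha),
\]
and nothing in your scheme absorbs the $\eta_\alpha$. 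The paper closes this by exploiting the very definition of $r_\alpha$: since $r^{(n-2)/2}\varphi_\alpha(r)$ is non-increasing on $(2R_0\mu_\alpha,r_\alpha)$, one has $r_\alpha^{(n-2)/2}\varphi_\alpha(r_\alpha)\leq(\delta r_\alpha)^{(n-2)/2}\varphi_\alpha(\delta r_\alpha)$ for any $0<\delta<1$, and combining this with Harnack and the intermediate bound above gives
\[
\eta_\alpha\leq C\delta^{(n-2)/2}\bigl(\mu_\alpha^{(n-2)/2}\delta^{2-n}r_\alpha^{2-n}+\eta_\alpha\bigr),
\]
whence $\eta_\alpha=O(\mu_\alpha^{(n-2)/2}r_\alpha^{2-n})$ by choosing $\delta$ small. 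This self-improvement via the monotonicity of the spherical average is the heart of the argument and is absent from your outline. Note also that you do not need a two-sided bound on $F_\alpha$: the sign observation (\ref{because of the gradient sign}) gives $\Delta_g u_\alpha\leq Cu_\alpha^{q-1}$, which is all that is needed for an upper bound via a positive Green kernel; the gradient estimate then follows directly from the Harnack inequality of Lemma~\ref{lem: former lemma 3}, not from differentiating under the integral.

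\textbf{Stage 2: the Pohozaev argument is misplaced.} Once $\eta_\alpha=O(\mu_\alpha^{(n-2)/2}r_\alpha^{2-n})$ is established, the relation $r_\alpha^2=O(\mu_\alpha)$ is immediate from the uniform lower bound (\ref{varepsilon}): since $\eta_\alpha\geq\varepsilon$, one reads off $r_\alpha^{n-2}=O(\mu_\alpha^{(n-2)/2})$. No Pohozaev identity is needed, and the balancing you describe does not obviously produce a lower bound on the boundary terms that would force such a relation. The Pohozaev analysis does appear in the paper, but only in the \emph{next} lemma (Lemma~\ref{lem: former lemma 7}), where it is used to show $H(0)=0$ and $r_\alpha=\rho_\alpha$, and there it already \emph{relies} on $r_\alpha^2=O(\mu_\alpha)$ to control several interior integrals. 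Your Stage~2 therefore inverts the logical order and would be circular if carried out along the lines of the paper's Pohozaev computation.
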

\begin{proof}[Proof of Lemma \ref{lem: former lemma 6}:]
It suffices to prove the estimate for $u_\alpha$; the rest follows as an immediate consequence of Lemma \ref{lem: former lemma 3}. We start by showing that for any sequence $z_\alpha\in \overline{B_{x_\alpha}(6r_\alpha)}\backslash\{x_\alpha\}$, there holds
\begin{equation}\label{est: form lem 6 most of the work}
u_\alpha(z_\alpha)=O\left(\mu_\alpha^{\frac{n-2}{2}}d_g(x_\alpha,z_\alpha)^{2-n}+\eta_\alpha\right).
\end{equation}
First, if $d_g(x_\alpha,z_\alpha)=O(\mu_\alpha)$, it falls within the range described in Lemma \ref{lem: former lemma 4}. On the other hand, when $r_\alpha=O\left(d_g(x_\alpha,z_\alpha)\right),$ we use Lemma \ref{lem: former lemma 3} together with (\ref{def: eta}). It remains to consider the intermediary case:
$$
    \frac{d_g(x_\alpha,z_\alpha)}{\mu_\alpha}\to\infty\quad\text{ and }\quad\frac{d_g(x_\alpha,z_\alpha)}{r_\alpha}\to 0\quad\text{ at }\alpha\to\infty.
$$
According to the Green representation formula, 
$$
    u_\alpha(z_\alpha)=O\left(\int_{B_{x_\alpha}(6r_\alpha)}d_g(z_\alpha,x)^{2-n}\Delta_g u_\alpha(x)\,dv_g\right)+O(\eta_\alpha),
$$
where the second term corresponds to the boundary element. Recall that
$$\Delta_g u_\alpha\leq C u_\alpha^{q-1}$$
because of the sign of the dominant gradient term, see (\ref{because of the gradient sign}). Using (\ref{varepsilon}), (\ref{est: rel size to max}) and Lemma \ref{lem: former lemma 5}, we can write that
$$
\begin{array}{l}
     \int_{B_{x_\alpha}(6r_\alpha)}d_g(z_\alpha,x)^{2-n}u_\alpha^{q-1}(x)\,dv_g\\ 
    \displaystyle =O\left(\mu_\alpha^{\frac{n}{2}-1}d_g(x_\alpha,z_\alpha)^{2-n}\right)\\ 
     +O\left(\mu_\alpha^{\frac{n+2}{2}(1-2\varepsilon)}\int_{B_{x_\alpha}(6r_\alpha)\backslash B_{x_\alpha}(\mu_\alpha)}d_g(z_\alpha,x)^{2-n}d_g(x_\alpha,x)^{-(n+2)(1-\varepsilon)}\,dv_g\right)\\ 
     +O\left(\eta_\alpha^{q-1}r_\alpha^{(n+2)\varepsilon}\int_{B_{x_\alpha}(6r_\alpha)\backslash B_{x_\alpha}(\mu_\alpha)}d_g(z_\alpha,x)^{2-n}d_g(x_\alpha,x)^{-(n+2)\varepsilon}\,dv_g\right)\\ 
     = O\left(\mu^{\frac{n}{2}-1}_\alpha d_g(x_\alpha,z_\alpha)^{2-n}\right)+O\left(\eta_\alpha^{q-1}r_\alpha^2\right)\\ 
     = O\left(\mu^{\frac{n}{2}-1}_\alpha d_g(x_\alpha,z_\alpha)^{2-n}\right)+O\left(\eta_\alpha\right).
\end{array}
$$
In order to get estimate (\ref{est: for former lemma 6}), it suffices to show that
\begin{equation}\label{est: of eta}
\eta_\alpha=O\left(\mu_\alpha^{\frac{n-2}{2}}r_\alpha^{2-n}\right).
\end{equation}
For any fixed $0<\delta<1$, taking $\alpha$ large enough, then the monotonicity of $r^{\frac{n-2}{2}}\varphi_\alpha(r)$ expressed in the definition of $r_\alpha$, see (\ref{def: r alpha}), and the fact that $\mu_\alpha=o(r_\alpha)$, see (\ref{ess: between r alpha and mu alpha}), imply that
$$r_\alpha^{\frac{n-2}{2}}\varphi_\alpha(r_\alpha)\leq (\delta r_\alpha)^{\frac{n-2}{2}}\varphi_\alpha(\delta r_\alpha)\quad\text{ for all }0<\delta<1,$$
so by Lemma \ref{lem: former lemma 3}, 
$$\frac{1}{C_3}\eta_\alpha\leq \delta^{\frac{n-2}{2}}\sup_{\partial B_{x_\alpha}(\delta r_\alpha)}u_\alpha.$$
According to estimate (\ref{est: form lem 6 most of the work}), this leads to
 $$\eta_\alpha\leq C\left(\mu_\alpha^{\frac{n-2}{2}}\delta^{2-n}r_\alpha^{2-n}+\eta_\alpha\right)\delta^{\frac{n-2}{2}},$$
where $C$ is independent of $\delta$ and $\alpha$. Choosing $\delta$ small enough leads to (\ref{est: of eta}).
\par Estimates (\ref{varepsilon}) and (\ref{est: of eta}) imply that
\begin{equation}\label{size of r alpha}
r_\alpha^2= O(\mu_\alpha).
\end{equation}
This ends the proof of the lemma.
\end{proof}

Given a blow-up sequence $(x_\alpha)_\alpha$ with $(\rho_\alpha)_\alpha$, the following lemma gives the exact asymptotic profile of $(u_\alpha)_\alpha$ at distance $(\rho_\alpha)_\alpha$ of $(x_\alpha)_\alpha$. 
\begin{lemma}\label{lem: former lemma 7}
Let $(x_\alpha)_\alpha$ and $(\rho_\alpha)_\alpha$ be a blow-up sequence. Then we have that $r_\alpha=\rho_\alpha$, where $r_\alpha$ is as in (\ref{def: r alpha}). Up to a subsequence, we have
\begin{equation}\label{est final profile in r alpha}
    u_\alpha(x_\alpha)\rho^{n-2}_\alpha u_\alpha(\exp_\alpha(\rho_\alpha x))\to\frac{R_0^{n-2}}{|x|^{n-2}}+H(x)
\end{equation}
in $C^2_{loc}(B_0(5)\backslash\{0\})$, where $H$ is some harmonic function in $B_0(5)$ satisfying $H(0)=0.$
\end{lemma}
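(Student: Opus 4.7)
The plan is to study both assertions of the lemma simultaneously via the rescaled function
$$w_\alpha(x) := u_\alpha(x_\alpha)\, r_\alpha^{n-2}\, u_\alpha\bigl(\exp_{x_\alpha}(r_\alpha x)\bigr), \qquad x \in B_0(5).$$
By Lemma \ref{lem: former lemma 6}, $w_\alpha(x) \leq C|x|^{2-n}$ and $|\nabla w_\alpha(x)| \leq C|x|^{1-n}$ on compact subsets of $B_0(5) \setminus \{0\}$; a diagonal extraction and standard elliptic regularity then yield a subsequential limit $w$ in $C^1_{loc}(B_0(5) \setminus \{0\})$ obeying the same bounds.

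I would first identify $w$ by passing to the limit in the rescaled equation
$$\Delta_{g_\alpha} w_\alpha = \mathcal{C}_\alpha\cdot (\text{right-hand side of (\ref{eq: EL alpha})}),\qquad \mathcal{C}_\alpha := u_\alpha(x_\alpha)\, r_\alpha^n = \mu_\alpha^{-(n-2)/2} r_\alpha^n.$$
Term-by-term inspection shows that the $h_\alpha u_\alpha$, $f_\alpha u_\alpha^{q-1}$, $a_\alpha/u_\alpha^{q+1}$, $b_\alpha/u_\alpha$ and mixed gradient contributions all vanish uniformly on compact subsets of $B_0(5) \setminus \{0\}$, using Lemma \ref{lem: former lemma 6}, the uniform lower bound (\ref{varepsilon}), the dimensional constraint $n \in \{3,4,5\}$ and the estimate $r_\alpha^2 = O(\mu_\alpha)$. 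The sole delicate contribution is the gradient-squared piece $|\nabla u_\alpha|^2/u_\alpha^{q+3}$, whose rescaling factor works out to exactly $(r_\alpha^2/\mu_\alpha)^{2(n-1)}$, which is only $O(1)$. To upgrade this to $o(1)$, I would exploit the singular profile imposed by Lemma \ref{lem: former lemma 4}: a direct matching argument probing $u_\alpha$ at intermediate scales using the Fowler tail forces $w_\alpha(y) \to R_0^{n-2}|y|^{2-n}$ near the origin, and plugging this profile into a limit equation retaining the gradient-squared term yields a contradiction, thereby forcing $r_\alpha^2/\mu_\alpha \to 0$. The rescaled equation then collapses to $\Delta w = 0$ on $B_0(5)\setminus\{0\}$, so that $H(x) := w(x) - R_0^{n-2}|x|^{2-n}$ is harmonic away from the origin, bounded near $0$, and hence extends smoothly to all of $B_0(5)$ with $H(0) = 0$ via removable singularity.

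To then show $r_\alpha = \rho_\alpha$, I would argue by contradiction. If $r_\alpha < \rho_\alpha$ along a subsequence, the maximality in (\ref{def: r alpha}) gives $\bigl(s^{(n-2)/2} \varphi_\alpha(s)\bigr)'(r_\alpha) = 0$. Setting $\tilde\varphi_\alpha(r) := u_\alpha(x_\alpha) r_\alpha^{n-2} \varphi_\alpha(r_\alpha r)$, the spherical mean-value property applied to the harmonic $H$ (together with $H(0) = 0$) gives $\tilde\varphi_\alpha(r) \to R_0^{n-2} r^{2-n}$ in $C^1_{loc}((0,5))$. Translating the vanishing derivative through the rescaling and passing to the limit yields
$$0 = \lim_{\alpha \to \infty}\left.\frac{d}{dr}\bigl(r^{(n-2)/2}\tilde\varphi_\alpha(r)\bigr)\right|_{r=1} = \left.\frac{d}{dr}\bigl(R_0^{n-2} r^{(2-n)/2}\bigr)\right|_{r=1} = \tfrac{2-n}{2} R_0^{n-2} \neq 0,$$
a contradiction. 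Thus $r_\alpha = \rho_\alpha$, and the profile identified in the first step is precisely (\ref{est final profile in r alpha}).

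The main obstacle is the gradient-squared term. Because Lemma \ref{lem: former lemma 6} only provides $r_\alpha^2 = O(\mu_\alpha)$, its rescaling factor is a priori only $O(1)$, so harmonicity of the limit is not automatic; the key intermediate step is precisely the matching-based incompatibility argument that upgrades $r_\alpha^2 = O(\mu_\alpha)$ to $r_\alpha^2 = o(\mu_\alpha)$. Once this upgrade is in hand, everything else follows from standard Green-function/harmonic analysis and the derivative contradiction above.
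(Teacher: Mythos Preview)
Your proposal has a decisive gap in the claim that $H(0)=0$. You write that $H(x):=w(x)-R_0^{n-2}|x|^{2-n}$ ``extends smoothly to all of $B_0(5)$ with $H(0)=0$ via removable singularity.'' The removable singularity theorem only tells you that a bounded harmonic function on a punctured ball extends harmonically across the puncture; it gives no information whatsoever about the \emph{value} of the extension at $0$. Determining $H(0)=0$ is in fact the technical core of the lemma. The paper obtains it by applying a Pohozaev identity to $u_\alpha$ on the ball $B_{x_\alpha}(\delta r_\alpha)$: the boundary terms produce $\frac{(n-2)^2}{2}\omega_{n-1}R_0^{n-2}H(0)\,\mu_\alpha^{n-2}r_\alpha^{2-n}$, and each interior term of the equation is shown to be $o(\mu_\alpha^{n-2}r_\alpha^{2-n})$ using the sharp pointwise bound of Lemma~\ref{lem: former lemma 6}, the lower bound $u_\alpha\geq\varepsilon$, the estimate $r_\alpha^2=O(\mu_\alpha)$, and crucially the dimensional restriction $n\in\{3,4,5\}$ (for $n=5$ a second Pohozaev identity with a constant vector field is needed to control $\nabla f_\alpha(x_\alpha)$). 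None of this is ``standard Green-function/harmonic analysis,'' and without $H(0)=0$ your derivative contradiction for $r_\alpha=\rho_\alpha$ cannot be launched.

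There is also a problem with the subsidiary step on which you place all the weight, namely the upgrade $r_\alpha^2=o(\mu_\alpha)$. Your sketch (``plugging the Fowler profile into a limit equation retaining the gradient-squared term yields a contradiction'') does not work: if one tests the singular profile $|x|^{2-n}$ in an equation of the form $\Delta w + c\,(\partial_1 w)^2/w^{q+3}=0$, the nonlinear term scales like $|x|^{3n-4}\to 0$ as $|x|\to 0$, so the leading behavior near the origin is perfectly compatible and no incompatibility arises. The paper neither proves nor needs $r_\alpha^2=o(\mu_\alpha)$; it works throughout with $r_\alpha^2=O(\mu_\alpha)$, accepts that the rescaled right-hand side is merely $O(1)$, and relies on the Pohozaev identity (which is an integral identity for $u_\alpha$ itself, not the limit) to force $H(0)=0$. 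Your final step, the derivative computation at $r=1$, is correct and matches the paper once $H(0)=0$ is in hand.
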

\begin{proof}[Proof of Lemma \ref{lem: former lemma 7}:]
First, we prove that, up to a subsequence,
\begin{equation}\label{prefinal profile}
u_\alpha(x_\alpha)r^{n-2}_\alpha u_\alpha(\exp_\alpha(r_\alpha x))\to\frac{R_0^{n-2}}{|x|^{n-2}}+H(x).
\end{equation}
Let us define the following rescaled quantities:
$$
    \hat{u}_\alpha(x)=\mu_\alpha^{1-\frac{n}{2}}r_\alpha^{n-2}u_\alpha\left(\exp_{x_\alpha}(r_\alpha x)\right)\text{ and }\hat{g}_\alpha(x)=\exp_{x_\alpha}^*g\left(\exp_{x_\alpha}(r_\alpha x)\right).
$$
Then
$$
    \Delta_{\hat{g}_\alpha}\hat{u}_\alpha=\hat{F}_\alpha,
$$
in $B_0(\delta r_\alpha^{-1})$ for some $\delta>0$ small enough, with
\begin{equation}\label{eq: form lemma 7's hat F}
\begin{array}{r l}
     \hat{F}_\alpha=&    \displaystyle -\mu_\alpha^{1-\frac{n}{2}}r_\alpha^{n}h_\alpha\left(\exp_{x_\alpha}(r_\alpha x)\right)u_\alpha\left(\exp_{x_\alpha}(r_\alpha x)\right) \\ 
    &  +\mu_\alpha^{1-\frac{n}{2}}r_\alpha^{n}f_\alpha \left(\exp_{x_\alpha}(r_\alpha x)\right)u_\alpha^{q-1}\left(\exp_{x_\alpha}(r_\alpha x)\right) \\ 
    &     +\mu_\alpha^{1-\frac{n}{2}}r_\alpha^{n}\frac{a_\alpha\left(\exp_{x_\alpha}(r_\alpha x)\right)}{u^{q+1}_\alpha\left(\exp_{x_\alpha}(r_\alpha x)\right)} -\mu_\alpha^{1-\frac{n}{2}}r_\alpha^n\frac{b_\alpha\left(\exp_{x_\alpha}(r_\alpha x)\right)}{u_\alpha \left(\exp_{x_\alpha}(r_\alpha x)\right)}\\ 
    &     -\mu_\alpha^{1-\frac{n}{2}}r_\alpha^{n} \frac{\langle\nabla u_\alpha\left(\exp_{x_\alpha}(r_\alpha x)\right), Y_\alpha\left(\exp_{x_\alpha}(r_\alpha x)\right)\rangle^2}{u_\alpha^{q+3}\left(\exp_{x_\alpha}(r_\alpha x)\right)}\\ 
    &  -\mu_\alpha^{1-\frac{n}{2}}r_\alpha^n c_\alpha\left(\exp_{x_\alpha}(r_\alpha x)\right)\langle\nabla u_\alpha,Y_\alpha\rangle\left(\exp_{x_\alpha}(r_\alpha x)\right) \Big[ \frac{d_\alpha\left(\exp_{x_\alpha}(r_\alpha x)\right)}{u_\alpha\left(\exp_{x_\alpha}(r_\alpha x)\right)^2}\\ 
    & + \frac{1}{u_\alpha\left(\exp_{x_\alpha}(r_\alpha x)\right)^{q+2}}\Big]\\ 
\end{array}
\end{equation}
Thanks to Lemma \ref{lem: former lemma 6}, we know that $|\hat{u}_\alpha|\leq C_K$  and $|\hat{F}_\alpha|=O(1)$ on any compact $K\subset B_0(5)\backslash\{0\}$. By standard elliptic theory, up to a subsequence,
$$
    \hat{u}_\alpha\to\hat{U}\text{ in }\mathcal{C}^1_{loc}(B_0(5)\backslash\{0\})
$$
with
$$\Delta_\xi\hat{U}=0\text{ in }B_0(5)\backslash\{0\}.$$
Separate $\hat{U}$ into the sum of a regular harmonic function and a singular part
$$\hat{U}=\frac{\lambda}{|x|^{n-2}}+H(x),$$
where $\lambda\geq 0.$ 
\par To get (\ref{prefinal profile}), it remains to show that $\lambda=R_0^{n-2}.$ For any $\delta>0$:
\begin{equation}\label{eq: lemma 7 minutia}
    \int_{B_0(\delta)}\hat{F}_\alpha\,dv_{\hat{g}_\alpha}=-\int_{\partial B_0(\delta)}\partial_\nu\hat{u}_\alpha\,d\sigma_{\hat{g}_\alpha}.
\end{equation}
Using the equation (\ref{eq: form lemma 7's hat F}), we estimate the left hand side of (\ref{eq: lemma 7 minutia}). In particular, 
$$
\begin{array}{c}
\displaystyle \int_{B_0(\delta)}f_\alpha\left(\exp_{x_\alpha}(r_\alpha x)\right)\mu_\alpha^{1-\frac{n}{2}}r_\alpha^{n}u_\alpha^{q-1}\left(\exp_{x_\alpha}(r_\alpha x)\right)\,dx \\ \\
\displaystyle =\int_{B_0(\delta\frac{r_\alpha}{\mu_\alpha})}f_\alpha\left(\exp_{x_\alpha}(\mu_\alpha z)\right)\mu_\alpha^{\frac{n+2}{2}}u_\alpha^{q-1}\left(\exp_{x_\alpha}(\mu_\alpha z)\right)\,dz
\end{array}
$$
where $\displaystyle z=\frac{r_\alpha}{\mu_\alpha}x$, and by Lemma \ref{lem: former lemma 4} and Lemma \ref{lem: former lemma 6},
$$
\begin{array}{c}
   \displaystyle \lim_{\alpha\to\infty}\int_{B_0(\delta)}f_\alpha\left(\exp_{x_\alpha}(r_\alpha x)\right)\mu_\alpha^{1-\frac{n}{2}}r_\alpha^{n}u_\alpha^{q-1}\left(\exp_{x_\alpha}(r_\alpha x)\right)\,dx \\ \\
   \displaystyle =f(x_0)\int_{R^n}\left(1+\frac{|x|^2}{R_0^2}\right)^{-1-\frac{n}{2}}\,dx.
\end{array}
$$
The gradient terms are controlled with the estimate (\ref{est: initial estimate}), and together with (\ref{varepsilon}), we obtain that the dominant gradient term of (\ref{eq: form lemma 7's hat F}) verifies
\begin{equation}\label{es 1}
    \begin{array}{r l}
  \displaystyle  \int_{B_0(\delta)}\frac{|\nabla u_\alpha|^2}{u_\alpha^{q+3}}\left(\exp_{x_\alpha}(r_\alpha x)\right)\mu_\alpha^{1-\frac{n}{2}}r_\alpha^n\,dx\leq  
    & \displaystyle  C \mu_\alpha^{1-\frac{n}{2}}r_\alpha^{n-2}\int_{B_0(\delta)}|x|^{-2}\,dx  \\ \\
    & \leq \displaystyle C\omega_{n-1} \mu_\alpha^{1-\frac{n}{2}}r_\alpha^{n-2}\delta^{n-2}.
\end{array}
\end{equation}
As $\mu_\alpha^{1-\frac{n}{2}}r_\alpha^{n-2}=O(1),$ the integral does not vanish as $\alpha\to\infty$; its size depends on $\delta$. The remaining terms in (\ref{eq: form lemma 7's hat F}) are negligible. Thus
$$
    \int_{B_0(\delta)}\hat{F}_\alpha\, dv_{\hat{g}_\alpha}= f(x_0)\int_{R^n}\left(1+\frac{|x|^2}{R_0^2}\right)^{-1-\frac{n}{2}}\,dx+o(1)+O(\delta^{n-2})
$$
for any $\delta>0$. It follows that
$$f(x_0)\int_{\R^n}\left(1+\frac{|x|^2}{R_0^2}\right)^{-1-\frac{n}{2}}\,dx=(n-2)\omega_{n-1}R_0^{n-2}.$$ Note also that the right hand side of (\ref{eq: lemma 7 minutia}) verifies
$$\begin{array}{r l}
-\int_{\partial B_0(\delta)}\partial_\nu \hat{u}_\alpha\,d\sigma_{\check{g}_\alpha}&=-\int_{\partial B_0(\delta)}\partial_\nu\hat{U}+o(1)\\
&=\lambda(n-2)\omega_{n-1}+o(1).
\end{array}
$$
since $H$ is smooth and harmonic.
Since
$$
    \lambda(n-2)\omega_{n-1}=R^{n-2}_0(n-2)\omega_{n-1}+O(\delta^{n-2})+o(1),
$$
for any $\delta>0$, we get that $\lambda=R_0^{n-2}.$
\par Finally, let us prove that $H(0)=0.$ The equation's dominant terms are invariant by rescaling, which leads us to use a Pohozaev identity to obtain new estimates for the remaining terms. Let $\Omega_\alpha$ correspond to $B_0(\delta r_\alpha)$ in the exponential chart at $x_\alpha\in M$ and let $X_\alpha=\frac{1}{2}\nabla d_g(x_\alpha,x)^2$ be the vector field of coordinates. Using integration by parts,
$$
\begin{array}{r l}
 \int_{\Omega_\alpha}\nabla u_\alpha(X_\alpha)\Delta_gu_\alpha\,dv_g=&  \int_{\Omega_\alpha}\langle\nabla \left(\nabla u_\alpha(X_\alpha)\right),\nabla u_\alpha\rangle\, dv_g\\ 
    & - \int_{\partial \Omega_\alpha}\nabla u_\alpha(X_\alpha)\partial_{\nu}u_\alpha\,d\sigma_g\\ 
	 =
	& \int_{\Omega_\alpha}\nabla^\#\nabla u_\alpha( X_\alpha, \nabla u_\alpha) + \nabla^\# X_\alpha\left(\nabla u_\alpha, \nabla u_\alpha\right)\,dv_g\\ 
	 & - \int_{\partial \Omega_\alpha}\nabla u_\alpha(X_\alpha)\partial_{\nu}u_\alpha\,d\sigma_g,
\end{array}
$$
where $(\nabla^\# X_\alpha)=(\nabla^i X_\alpha)^j$. Since
$$
\begin{array}{c}

\int_{\Omega_\alpha}|\nabla u_\alpha|^2div_g X_\alpha\,dv_g+\int_{\Omega_\alpha}\langle\nabla\left(|\nabla u_\alpha|^2\right),X_\alpha\rangle\,dv_g\\

=\int_{\partial \Omega_\alpha}|\nabla u_\alpha|^2 \langle X_\alpha, \nu\rangle \,d\sigma_g,
\end{array}
$$
we can write that
\begin{equation}\label{eq: Pohozaev}
\begin{array}{r l}
\int_{\Omega_\alpha}
	&\displaystyle\left(\nabla u_\alpha(X_\alpha)+\frac{n-2}{2}u_\alpha\right)\Delta_g u_\alpha\,dv_g \\ 
	& =\int_{\Omega_\alpha}\left(\nabla^\# X_\alpha\left(\nabla u_\alpha, \nabla u_\alpha\right)-\frac{1}{2}\left(div_g X_\alpha\right)|\nabla u_\alpha|^2\right)\,dv_g\\ 
	&  + \int_{\partial\Omega_g}\left(\frac{1}{2}\langle X_\alpha,\nu\rangle |\nabla u_\alpha|^2-\nabla u_\alpha(X_\alpha)\partial_\nu u_\alpha-\frac{n-2}{2}u_\alpha\partial_\nu u_\alpha\right)\,d\sigma_g.
\end{array}
\end{equation}
We begin by analyzing the right-hand side of (\ref{eq: Pohozaev}). By our choice of $X_\alpha$, $(\nabla^\# X_\alpha)^{ij}=g^{ij}+O(d_g(x_\alpha,x)^2),$ and consequently
$$
\begin{array}{rl}
     \int_{\Omega_\alpha} &\displaystyle \left(\nabla^\# X_\alpha\left(\nabla u_\alpha, \nabla u_\alpha\right)-\frac{1}{2}\left(div_g X_\alpha\right)|\nabla u_\alpha|^2\right)\,dv_g  \\ 
     &  =O\left(\int_{\Omega_\alpha}d_g(x_\alpha,x)^2|\nabla u_\alpha|^2dv_g\right).
\end{array}
$$
According to (\ref{est: for former lemma 6}),
$$
\begin{array}{c}
\displaystyle  \int_{\Omega_\alpha}d_g(x_\alpha,x)^2|\nabla u_\alpha|^2\,dv_g\leq C\int_{\Omega_\alpha}\mu_\alpha^{n-2}\left(d_g(x_\alpha,x)+\mu_\alpha\right)^{4-2n}\,dv_g,
\end{array}
$$
so
$$\displaystyle \int_{\Omega_\alpha}d_g(x_\alpha,x)^2|\nabla u_\alpha|^2\,dv_g \leq\left\{  \begin{array}{l c}
      \displaystyle O(\mu_\alpha r_\alpha) & \displaystyle \text{ if }n=3 \\ \\
      
     \displaystyle O\left(\mu_\alpha^2\ln\frac{1}{\mu_\alpha}\right) & \displaystyle \text{ if }n=4 \\ \\
      \displaystyle O\left(\mu_\alpha^2\right) & \displaystyle \text{ if }n=5
\end{array}\right.$$
In all these three cases, thanks to (\ref{size of r alpha}), the integral is of the order $o(\mu_\alpha^{n-2} r_\alpha^{2-n})$.
From (\ref{est final profile in r alpha}),
$$
    \begin{array}{r l}
    \displaystyle \int_{\partial\Omega_\alpha} & \displaystyle  \left(\frac{1}{2}\langle X_\alpha,\nu\rangle |\nabla u_\alpha|^2-\nabla u_\alpha(X_\alpha)\partial_\nu u_\alpha-\frac{n-2}{2}u_\alpha\partial_\nu u_\alpha\right)\,d\sigma_g  \\ 
     &=\left(\frac{(n-2)^2}{2}\omega_{n-1}R_0^{n-2}H(0)+o(1)\right)\mu_\alpha^{n-2}r_\alpha^{2-n}.
    \end{array}
$$
Note that the boundary term does not depend on $\delta$, and as a result
\begin{equation}\label{eq: pohozaev id 1 in lem 7 former}
    \begin{array}{r l}
         \int_{\Omega_\alpha}&\displaystyle\left(\nabla u_\alpha(X_\alpha)+\frac{n-2}{2}u_\alpha\right)\Delta_g u_\alpha\,dv_g  \\ 
         & =\left(\frac{(n-2)^2}{2}\omega_{n-1}R_0^{n-2}H(0)+o(1)\right)\mu_\alpha^{n-2}r_\alpha^{2-n}
    \end{array}
\end{equation}
We now analyse the right hand side of (\ref{eq: Pohozaev}) by using (\ref{eq: EL alpha}):
\begin{equation}\label{eq: from the eq}
\begin{array}{r l}
    \int_{\Omega_\alpha}
    &\left(\nabla u_\alpha(X_\alpha)+\frac{n-2}{2}u_\alpha\right)\Delta_g u_\alpha\,dv_g  \\ 
    =& \int_{\Omega_\alpha}\left(\nabla u_\alpha(X_\alpha)+\frac{n-2}{2}u_\alpha\right)f_\alpha u_\alpha^{q-1}\,dv_g\\
    & -  \int_{\Omega_\alpha}\left(\nabla u_\alpha(X_\alpha)+\frac{n-2}{2}u_\alpha\right)\langle\nabla u_\alpha, Y_\alpha\rangle^2 u_\alpha^{-q-3}\,dv_g\\ 
    & -  \int_{\Omega_\alpha}\left(\nabla u_\alpha(X_\alpha)+\frac{n-2}{2}u_\alpha\right)c_\alpha\langle\nabla u_\alpha, Y_\alpha\rangle(d_\alpha u_\alpha^{-2}+u_\alpha^{-q-2})\,dv_g\\ 
    & +  \int_{\Omega_\alpha}\left(\nabla u_\alpha(X_\alpha)+\frac{n-2}{2}u_\alpha\right)\left(a_\alpha u_\alpha^{-q-1}-b_\alpha u_\alpha^{-1}- h_\alpha u_\alpha\right)\,dv_g
\end{array}
\end{equation}
and we look at each term in turn. By the estimates (\ref{varepsilon}) and (\ref{est: initial estimate}), we get
\begin{equation}\label{for dominant nabla term}
\begin{array}{r l}
 \left|\int_{\Omega_\alpha}\left(\nabla u_\alpha(X_\alpha)+\frac{n-2}{2}u_\alpha\right)\langle\nabla u_\alpha,Y_\alpha\rangle^2 u_\alpha^{-q-3}\,dv_g \right|
& \leq C\int_{\Omega_\alpha}d_g(x_\alpha,x)^{-2}dv_g\\ 
& \leq C(\delta r_\alpha)^{n-2}
\end{array}
\end{equation}
and, similarly,
\begin{equation}\label{for nondominant nabla term}
\begin{array}{c}
\left|\int_{\Omega_\alpha}\left(\nabla u_\alpha(X_\alpha)+\frac{n-2}{2}u_\alpha\right)c_\alpha\langle\nabla u_\alpha, Y_\alpha\rangle(d_\alpha u_\alpha^{-2}+u_\alpha^{-q-2})\,dv_g\right|\\
\leq C(\delta r_\alpha)^{n-1}.
\end{array}
\end{equation}
We also have that
\begin{equation}\label{for a and b}
\left|\int_{\Omega_\alpha}\left(\nabla u_\alpha(X_\alpha)+\frac{n-2}{2}u_\alpha\right)\left(a_\alpha u_\alpha^{-q-1}-b_\alpha u_\alpha^{-1}\right)\right|\leq Cr_\alpha^n.
\end{equation}
From (\ref{est: for former lemma 6}), we obtain that
\begin{equation}\label{for h}
\begin{array}{c}
\left|\int_{\Omega_\alpha}h_\alpha\left(\nabla u_\alpha(X_\alpha)+\frac{n-2}{2}u_\alpha\right)u_\alpha\,dv_g\right|\\=O\left(\int_{\Omega_\alpha}\mu_\alpha^{n-2}(\mu_\alpha+d_g(x_\alpha,x))^{4-2n}\,dx\right)\\
=o(\mu_\alpha^{n-2}r_\alpha^{2-n})
\end{array}
\end{equation}
for $3\leq n\leq 5.$ Using integration by parts,
\begin{equation}
\begin{array}{r l}
 \int_{\Omega_\alpha}\nabla u_\alpha(X_\alpha)f_\alpha u_\alpha^{q-1}\,dv_g=&\frac{1}{q} \int_{\partial \Omega_\alpha}f_\alpha r_\alpha u_\alpha^q\,d\sigma_g\\ 
& -\frac{1}{q}\int_{\Omega_\alpha}div_g X_\alpha f_\alpha u_\alpha^q\, dv_g \\ &- \frac{1}{q}\int_{\Omega_\alpha}\nabla f_\alpha(X_\alpha)u_\alpha^q\,dv_g.
\end{array}
\end{equation}
Thus we can write that
$$
\begin{array}{r l}
\int_{\Omega_\alpha}\left(\nabla u_\alpha(X_\alpha)+\frac{n-2}{2}u_\alpha\right)f_\alpha u_\alpha^{q-1}\,dv_g&=\frac{1}{q}r_\alpha\int_{\partial \Omega_\alpha}f_\alpha u_\alpha^q\,d\sigma_g\\
&\,+\int_{\Omega_\alpha}\left(-\frac{1}{q}div_g(X_\alpha)+\frac{n-2}{2}\right)f_\alpha u_\alpha^q\,dv_g\\
&\,-\frac{1}{q}\int_{\Omega_\alpha}\nabla f_\alpha(X_\alpha)u_\alpha^q\,dv_g.
\end{array}
$$
Since $div_g(X_\alpha)=n+O\left(d_g(x_\alpha,x)^2\right)$, this leads to
$$
\begin{array}{r l}
\int_{\Omega_\alpha}\left(\nabla u_\alpha(X_\alpha)+\frac{n-2}{2}u_\alpha\right)f_\alpha u_\alpha^{q-1}\,dv_g&=\frac{1}{q}r_\alpha\int_{\partial \Omega_\alpha}f_\alpha u_\alpha^q\,d\sigma_g\\
&\,+O\left(\int_{\Omega_\alpha} d_g(x_\alpha,x)^2 u_\alpha^q\,dv_g\right)\\
&\,-\frac{1}{q}\int_{\Omega_\alpha}\nabla f_\alpha(X_\alpha)u_\alpha^q\,dv_g.

\end{array}
$$
Using lemmas \ref{lem: former lemma 4} and \ref{lem: former lemma 6}, this leads to
$$
\begin{array}{r l}
\int_{\Omega_\alpha}\left(\nabla u_\alpha(X_\alpha)+\frac{n-2}{2}u_\alpha\right)f_\alpha u_\alpha^{q-1}\,dv_g&= O\left(\mu_\alpha^n r_\alpha^{-n}\right)+O(\mu_\alpha^2)\\
&-\frac{1}{q}\int_{\Omega_\alpha}\nabla f_\alpha(X_\alpha)u_\alpha ^q\,dv_g
\end{array}
$$
so that, thanks to (\ref{size of r alpha}),
\begin{equation}\label{for f}
\begin{array}{c}
\int_{\Omega_\alpha}\left(\nabla u_\alpha(X_\alpha)+\frac{n-2}{2}u_\alpha\right)f_\alpha u_\alpha^{q-1}\,dv_g=o\left(\mu_\alpha^{n-2}r_\alpha^{2-n}\right)\\-\frac{1}{q}\int_{\Omega_\alpha}\nabla f_\alpha(X_\alpha)u_\alpha^q\,dv_g
\end{array}
\end{equation}
if $3\leq n\leq 5.$
\par We claim that
\begin{equation}\label{for nabla f}
\int_{\Omega_\alpha}\nabla f_\alpha (X_\alpha)u_\alpha^q\,dv_g=o\left(\mu_\alpha^{n-2}r_\alpha^{2-n}\right).
\end{equation}
Thanks to (\ref{eq: pohozaev id 1 in lem 7 former}), (\ref{for dominant nabla term}), (\ref{for nondominant nabla term}), (\ref{for a and b}), (\ref{for h}), (\ref{for f}) and (\ref{for nabla f}), we see that
\begin{equation}\label{Hdelta}
H(0)=o(1)+\delta^4
\end{equation}
for any $\delta>0$, so by taking $\delta\to 0$ wee see that $H(0)=0$.
\par In order to prove (\ref{for nabla f}), we can first use Lemma \ref{lem: former lemma 6} to write that
$$\int_{\Omega_\alpha}\nabla f_\alpha(X_\alpha)u_\alpha^q\,dv_g=O(\mu_\alpha)$$
which leads to (\ref{for nabla f}) if $n=3,4$ thanks to (\ref{size of r alpha}), but is not enough for $n=5$. In order to improve the estimate in the case $n=5$, note that
$$\begin{array}{r l}
\displaystyle \int_{\Omega_\alpha}\nabla f_\alpha(X_\alpha)u_\alpha^q\,dv_g 
&\displaystyle =\partial_i f(x_\alpha)\int_{\Omega_\alpha}x^iu_\alpha^q\,dv_g\\ \\
&\displaystyle\quad +O\left(\int_{\Omega_\alpha}d_g(x_\alpha,x)^{1+\eta}u_\alpha^q\,dv_g\right)\\ \\
&\displaystyle =o\left(\mu_\alpha|\nabla f_\alpha(x_\alpha)|\right)+ O\left(\mu_\alpha^{1+\eta}\right)\\ \\
&\displaystyle =o\left(\mu_\alpha|\nabla f_\alpha(x_\alpha)|\right)+ o\left(\mu_\alpha^{3}r_\alpha^{-3}\right).
\end{array}$$
with $\eta>\frac{1}{2}.$ Thus it remains to prove that 
\begin{equation}\label{est: gjgjhvj}
|\nabla f_\alpha(x_\alpha)|=O\left(\mu_\alpha^2 r_\alpha^{-3}\right).
\end{equation}
As before, we use a Pohozaev-type identity. We make use of the equation's symmetry by translation, with $Z=Z^i$ a constant vector field in the exponential chart of $x_\alpha$. We can write that 
\begin{equation}\label{newpoz}
\int_{\Omega_\alpha}\nabla u_\alpha(Z_\alpha)\Delta_g u_\alpha\,dv_g=O\left(\int_{\Omega_\alpha}d_g(x_\alpha,x)|\nabla u_\alpha|^2\,dv_g +\int_{\partial \Omega_\alpha}|\nabla u_\alpha|^2\,d\sigma_g\right),
\end{equation}
which is $o(\mu_\alpha^2 r_\alpha^{-3})$. On the left-hand side, we use (\ref{eq: EL alpha}). Lemma \ref{lem: former lemma 3} and (\ref{varepsilon}) imply that
\begin{equation}\label{newpoz1}
\begin{array}{r l}
\displaystyle \int_{\Omega_\alpha}\nabla u_\alpha(Z_\alpha)\frac{\langle\nabla u_\alpha,Y_\alpha\rangle^2}{u_\alpha^{q+3}}
\,dv_g &\displaystyle \leq \frac{1}{\varepsilon^q}\int_{\Omega_\alpha}\left|\frac{\nabla u_\alpha}{u_\alpha}\right|^3\,dv_g\\ \\
&\displaystyle
\leq \frac{1}{\varepsilon^q}\int_{\Omega_\alpha}d_g(x_\alpha,x)^{-3}\,dv_g\\ \\
&\displaystyle =O(r^2_\alpha)=o(\mu_\alpha^2r_\alpha^{-3}).
\end{array}
\end{equation}
We see that
\begin{equation}\label{newpoz2}
\int_{\Omega_\alpha}\nabla u_\alpha(Z_\alpha)c_\alpha\langle\nabla u_\alpha, Y_\alpha\rangle\left(\frac{d_\alpha}{u_\alpha^2}+\frac{1}{u_\alpha^{q+2}}\right)
\,dv_g=O(r^2_\alpha)=o(\mu_\alpha^2r_\alpha^{-3}),
\end{equation}
and that the same holds for the terms corresponding to $h_\alpha$, $b_\alpha$ and $c_\alpha$. So (\ref{newpoz}), (\ref{newpoz1}) and (\ref{newpoz2}) imply that
$$ \int_{\Omega_\alpha}\nabla u_\alpha(Z_\alpha)f_\alpha u_\alpha^{q-1}\,dv_g=o(\mu_\alpha^2 r_\alpha^{-3}).$$
Furthermore,
$$\begin{array}{r l}
\displaystyle \int_{\Omega_\alpha}\nabla u_\alpha(Z_\alpha)f_\alpha u_\alpha^{q-1}\,dv_g=
&\displaystyle O\left(\int_{\partial\Omega_\alpha}u_\alpha^q\,d\sigma_g\right)\\ \\
&\displaystyle-\frac{1}{q}\int_{\Omega_\alpha}div_g(Z_\alpha)f_\alpha u_\alpha^q\,dv_g\\ \\
&\displaystyle -\frac{1}{q}\nabla f_\alpha(Z_\alpha)\int_{\Omega_\alpha}u_\alpha^q\,dv_g,
\end{array}
$$
which leads us to conclude the claim in (\ref{for nabla f}).
Note that $$div_g(Z_\alpha)=O\left(d_g(x_\alpha,x)^\eta\right)$$ and
$$\int_{\Omega_\alpha}u_\alpha^q\,dv_g\rightarrow\int_{R^n}\left(1+\frac{|x|^2}{R_0^2}\right)^{-5}\,dx<\infty.$$
\par Finally, we are in the position to remark that $\rho_\alpha=r_\alpha.$ Remember that $$\varphi(r)=\frac{1}{\omega_{n-1}r^{n-1}}\int_{\partial B_0(r)}\hat{U}=\left(\frac{R_0}{r}\right)^{n-2}+H(0)$$ and that $\left(r^{\frac{n-2}{2}}\varphi(r)\right)'(1)=0$, so if $r_\alpha<\rho_\alpha$, then $H(0)=R_0^{n-2}$, which contradicts (\ref{Hdelta}). Thus (\ref{prefinal profile}) implies (\ref{est final profile in r alpha}), and this wraps up the proof of the lemma.
\end{proof}
Moreover, $\rho_\alpha=r_\alpha$ means that $\rho_\alpha\to 0$ because $r_\alpha=O\left(\mu_\alpha^\frac{1}{2}\right)$ thanks to (\ref{size of r alpha}). As an important consequence, there do not exist any isolated bubbles. Otherwise, if a bubble were isolated, then we could choose a blow-up sequence with $0<\delta<\rho_\alpha$, contradicting the previous result. 
\subsection{Proof of the stability theorem}
We are now in the position to prove Theorem \ref{theo stability eq}. Let
$$\delta_\alpha:=\min_{1\leq i<j\leq N_\alpha}d_g(x_{i,\alpha}, x_{j,\alpha}).$$
\par For any $R>0$, let $1\leq M_{R,\alpha}$ be such that
$$d_g(x_{1,\alpha}, x_{i_\alpha,\alpha})\leq R\delta_\alpha\quad\text{ for }\quad i_\alpha\in\{1,\dots,M_{R,\alpha}\}, \text{ and}$$
$$d_g(x_{1,\alpha}, x_{j_\alpha,\alpha})> R\delta_\alpha \quad\text{ for }\quad j_\alpha\in\{M_{R,\alpha}+1,\dots, N_\alpha\}.$$
We consider the rescaled quantities
$$\check{u}_\alpha(x):=\delta_\alpha^{\frac{n-2}{2}}u_\alpha(\exp_{x_{1,\alpha}}(\delta_\alpha x))\quad\text{ and }\quad \check{g}_\alpha(x):=\left(\exp^*_{x_{1,\alpha}}g\right)(\delta_\alpha x)$$
and the coordinates $\check{x}_{i,\alpha}:=\delta_\alpha^{-1}\exp_{x_{1,\alpha}}^{-1}(x_{i,\alpha})$ in the exponential chart. It's obvious that $|\check{x}_{2,\alpha}|=1$ and $|\check{x}_{i,\alpha}|\geq 1$.
\par The following lemma is a direct consequence of Lemma \ref{lem: former lemma 3}.
\begin{lemma}\label{lem former lemma 9}
For all $R>0$, there exists $C_R>0$ such that the Harnack-type inequality
$$||\nabla \check{u}_\alpha||_{L^\infty(\Omega_R)}\leq C_R \sup_{\Omega_R}\check{u}_\alpha\leq C_R^2\inf_{\Omega_R}\check{u}_\alpha$$
holds, where $\Omega_R=B_0(R)\backslash\bigcup^{M_{2R,\alpha}}_{i=1}B_{\check{x}_{i,\alpha}}\left(\frac{1}{R}\right).$
\end{lemma}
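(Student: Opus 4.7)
The plan is to work in the rescaled coordinates, apply the pointwise estimate (\ref{est: initial estimate}) to control the logarithmic gradient of $\check{u}_\alpha$ on $\Omega_R$, and then mimic the end of the proof of Lemma \ref{lem: former lemma 3}.

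First I would convert (\ref{est: initial estimate}) into a pointwise bound on $|\nabla \ln \check{u}_\alpha|$ on $\Omega_R$. For any $y\in\Omega_R$ and any $i\leq M_{2R,\alpha}$, the definition of $\Omega_R$ gives $d_{\check{g}_\alpha}(\check{x}_{i,\alpha},y)\geq 1/R$; for $j>M_{2R,\alpha}$, the defining property of $M_{2R,\alpha}$ places $\check{x}_{j,\alpha}$ outside $B_0(2R)$, so $d_{\check{g}_\alpha}(\check{x}_{j,\alpha},y)\geq R\geq 1/R$. Hence $\min_{i=1,\dots,N_\alpha} d_{\check{g}_\alpha}(\check{x}_{i,\alpha},y)\geq 1/R$ on $\Omega_R$. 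Unscaling back to $M$, the point $x:=\exp_{x_{1,\alpha}}(\delta_\alpha y)$ satisfies $\min_i d_g(x_{i,\alpha},x)\geq \delta_\alpha/R$, and isolating the gradient term in (\ref{est: initial estimate}) yields $|\nabla u_\alpha/u_\alpha|(x)\leq C_1^{2/(n-2)}R/\delta_\alpha$. Rescaling gives
\[
|\nabla \ln \check{u}_\alpha|(y)\leq C_1^{2/(n-2)}R \quad \text{for all } y\in\Omega_R,
\]
which is the exact rescaled analogue of estimate (\ref{e2}) from the proof of Lemma \ref{lem: former lemma 3}.

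The rest follows the pattern of that proof. The first inequality $||\nabla \check{u}_\alpha||_{L^\infty(\Omega_R)}\leq C_R\sup_{\Omega_R}\check{u}_\alpha$ is immediate from the logarithmic gradient bound. For the second, integrating along a curve joining a maximum and a minimum of $\check{u}_\alpha$ in $\Omega_R$ gives $\sup_{\Omega_R}\ln\check{u}_\alpha-\inf_{\Omega_R}\ln\check{u}_\alpha \leq C_1^{2/(n-2)}R\, L(R)$, provided one can take a curve of length at most some $L(R)$ independent of $\alpha$. Exponentiating then yields $\sup_{\Omega_R}\check{u}_\alpha \leq \exp(C_1^{2/(n-2)}R\, L(R))\inf_{\Omega_R}\check{u}_\alpha$, and one picks $C_R$ accordingly.

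The only point requiring real attention -- the mild obstacle of the proof -- is the uniform path-length bound $L(R)$. It follows from a packing count: since $|\check{x}_{i,\alpha}-\check{x}_{j,\alpha}|\geq 1$ by construction of $\delta_\alpha$, the balls $B_{\check{x}_{i,\alpha}}(1/2)$ are pairwise disjoint and contained in $B_0(2R+1/2)$, which forces $M_{2R,\alpha}\leq C(n)R^n$. With only uniformly many disjoint holes of radius $1/R$ removed from $B_0(R)$, a standard connectivity argument in $\R^n$ supplies the required $L(R)$ depending only on $n$ and $R$, and the lemma follows.
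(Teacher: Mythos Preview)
Your proposal is correct and follows the same approach the paper has in mind: the paper does not give a proof at all but simply asserts the lemma as ``a direct consequence of Lemma \ref{lem: former lemma 3}.'' You have spelled out precisely the details that statement leaves implicit --- the pointwise bound on $|\nabla\ln\check{u}_\alpha|$ on $\Omega_R$ coming from (\ref{est: initial estimate}) after rescaling, the packing bound $M_{2R,\alpha}\leq C(n)R^n$ from the mutual separation $|\check{x}_{i,\alpha}-\check{x}_{j,\alpha}|\geq 1$, and the resulting uniform path-length bound $L(R)$ needed to integrate the logarithmic gradient --- and these are exactly what is required to pass from the single-annulus setting of Lemma \ref{lem: former lemma 3} to the multi-hole domain $\Omega_R$.
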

Note that, for $1\leq i<j\leq M_{R,\alpha}$, $B_{x_{i,\alpha}}\left(\frac{\delta_\alpha}{4}\right)$ and $B_{x_{j,\alpha}}\left(\frac{\delta_\alpha}{4}\right)$ are disjoint, which is equivalent to saying that $B_{\check{x}_{i,\alpha}}\left(\frac{1}{4}\right)$ and $B_{\check{x}_{j,\alpha}}\left(\frac{1}{4}\right)$ are also disjoint. 
\par At this point, we are finally able to prove Theorem \ref{theo stability eq}, which we stated at the very beginning of this section. We define two possible types of concentration points, according to how $\check{u}_\alpha$ explodes. We prove that, within a cluster, we can only find one type or the other, but never both. Finally, we see that the existence of either type leads to contradictions, which implies that $\check{u}_\alpha$ admits no concentration points whatsoever. 
\begin{proof}[Proof of Theorem \ref{theo stability eq}:]
\par Consider the cluster around $(x_{1,\alpha})_\alpha$, for some $R>0$. There are two possible cases. The first type of concentration point corresponds to
\begin{equation}\label{first case concentration point}
\sup_{B_{\check{x}_{i,\alpha}}\left(\frac{1}{2}\right)}\left(\check{u}_\alpha(x)+\left|\frac{\nabla\check{u}_\alpha(x)}{\check{u}_\alpha(x)}\right|^{\frac{n-2}{2}}\right)=O(1).
\end{equation}
In this case, note that $(\check{u}_\alpha)_\alpha$ is uniformly bounded in $\mathcal{C}^1_{loc}$. Moreover, we find a lower bound, as by (\ref{est: 1 of former lemma 1}) from Lemma \ref{lem: former lemma 2},
$$|\check{x}_{i,\alpha}|^{\frac{n-2}{2}}\check{u}_\alpha(\check{x}_{i,\alpha})\geq 1.$$
There exists $\delta_i>0$ such that
$$
\inf_{B_{\check{x}_{i,\alpha}}(\delta_i)}\check{u}_\alpha\geq \frac{1}{2}|\check{x}_{i,\alpha}|^{1-\frac{n}{2}},
$$
which leads to the existence of $\delta_0>0$ where
\begin{equation}\label{lower bound}
\inf_{B_{\check{x}_{i,\alpha}}(\delta_0)}\check{u}_\alpha\geq \frac{1}{2}.
\end{equation}
The second type is defined by
\begin{equation}\label{second case concentration point}
\sup_{B_{\check{x}_{i,\alpha}}\left(\frac{1}{2}\right)}\left(\check{u}_\alpha(x)+\left|\frac{\nabla\check{u}_\alpha(x)}{\check{u}_\alpha(x)}\right|^{\frac{n-2}{2}}\right)\to\infty.
\end{equation}
In this case, either
\begin{equation}\label{nabla explodes}
\sup_{B_{\check{x}_{i,\alpha}}\left(\frac{1}{2}\right)}\check{u}_\alpha(x)\leq M\quad \text{ and }\quad \sup_{B_{\check{x}_{i,\alpha}}\left(\frac{1}{2}\right)}|\nabla\check{u}_\alpha(x)|\to\infty,
\end{equation}
or
\begin{equation}\label{l infty explodes}
\sup_{B_{\check{x}_{i,\alpha}}\left(\frac{1}{2}\right)}\check{u}_\alpha(x)\to\infty.
\end{equation}
We show (\ref{nabla explodes}) is not actually possible. Assume it holds true. Then there exist $(\check{x}_\alpha)_\alpha\subset (B_{\check{x}_{i,\alpha}}\left(\frac{1}{2}\right))_\alpha$ and $(\check{\nu}_\alpha)_\alpha$ such that 
$$
\check{\nu}_\alpha^{1-\frac{n}{2}}:= \check{u}_\alpha(\check{x}_\alpha)+\left|\frac{\nabla \check{u}_\alpha(\check{x}_\alpha)}{\check{u}_\alpha(\check{x}_\alpha)}\right|^{\frac{n-2}{2}}=\sup_{x\in B_{\check{x}_{i,\alpha}}\left(\frac{1}{2}\right)}\left(\check{u}_\alpha(x)+\left|\frac{\nabla \check{u}_\alpha(x)}{\check{u}_\alpha(x)}\right|^{\frac{n-2}{2}}\right)
$$
with
\begin{equation}
\check{\nu}_\alpha\to 0.
\end{equation}
We define the rescaled quantities
$$\check{v}_\alpha(x):=\check{\nu}_\alpha^{\frac{n-2}{2}}\check{u}_\alpha\left(\exp_{\check{x}_\alpha}(\check{\nu}_\alpha x)\right)\quad\text{and}\quad \check{h}_\alpha(x):=\left(\exp_{\check{x}_\alpha}^* \check{g}\right)(\check{\nu}_\alpha x)$$
respectively, defined in $\Omega_\alpha:=B_0\left(\frac{1}{2\check{\nu}_\alpha}\right)$. For any $R>0$ and $\alpha$ large enough so that $R<\frac{1}{2\check{\nu}_\alpha}$,
\begin{equation}
    \limsup_{\alpha\to\infty}\sup_{B_0(R)} \left(\check{v}_\alpha+\left|\frac{\nabla \check{v}_\alpha}{\check{v}_\alpha}\right|\right)=1.
\end{equation}
Thus
$$|\nabla \ln \check{v}_\alpha|\leq 1$$
and
\begin{equation}
    \check{v}_\alpha(0)e^{-x}\leq \check{v}_\alpha(x)\leq \check{v}_\alpha(0)e^{x}.
\end{equation}
Note that the metrics $\check{h}_\alpha\to\xi$ in $\mathcal{C}^2_{loc}$ as $\alpha\to\infty$. Assume that, up to a subsequence, $u_\alpha(\check{x}_\alpha)\to l<\infty$. We also deduce that $\check{v}_\alpha(0)\to 0.$ Let $\check{x}_0:=\lim_{\alpha\to\infty}\check{x}_\alpha$ and let us denote
$$\check{w}_\alpha(x):=\frac{\check{v}_\alpha(x)}{\check{v}_\alpha(0)}.$$
These functions are bounded from below,
\begin{equation}\label{est w again}
\check{w}_\alpha(x)\geq\frac{\varepsilon}{l}+o(1)>0.
\end{equation}
Moreover, 
$$\check{w}_\alpha(x)\leq e^{|x|}.$$
    By standard elliptic theory, we find that there exists $\check{w}:=\lim_{\alpha\to\infty}\check{w}_\alpha$ in $\mathcal{C}^1$ solving:
    $$\Delta \check{w}=-\frac{1}{l^{q+2}}\frac{\langle\nabla \check{w}, Y(\check{x}_0)\rangle^2}{\check{w}^{q+3}}.$$
    Note that 
    $$\Delta \check{w}^{-\alpha}\leq \alpha\frac{|\nabla \check{w}|^2}{\check{w}^{\alpha+2}}\left[\frac{||Y(\check{x}_0)||_{L^\infty}^2}{\varepsilon^{q+2}}-(\alpha+1)\right],$$
    so $\check{w}^{-\alpha}$ is subharmonic for $\alpha$ large, and so Lemma \ref{lem: A1} (see the Annex) implies that $\check{w}$ is constant, which in turn implies that $\check{U}=0$ and $\nabla \check{U}=0$, which is false (see proof of Lemma \ref{lem: former lemma 1}). Therefore, the second subcase cannot be true. This essentially means that when a concentration point is of the second type, then
    $$\sup_{B_{\check{x}_{i,\alpha}}\left(\frac{1}{2}\right)}\check{u}_\alpha(x)\to\infty$$
and so
$$\check{u}_\alpha(\check{x}_{i,\alpha})\to\infty.$$
\par Let us denote $\check{x}_i:=\lim_{\alpha\to\infty}\check{x}_{i,\alpha}$ up to a subsequence. According to Proposition \ref{prop},
\begin{equation}\label{sectype}
\check{u}_\alpha(\check{x}_{i,\alpha})\check{u}_\alpha(x)\mapsto\frac{\lambda_i}{|x-\check{x}_i|^{n-2}}+H_i(x)
\end{equation}
in $\mathcal{C}^1$ in $B_{\check{x}_i}\left(\frac{1}{2}\right)\backslash \{\check{x_i}\}$, with $\lambda_i>0$, where $H_i$ is a harmonic function in $B_{\check{x}_{i,\alpha}}\left(\frac{1}{2}\right)$, $H(\check{x}_i)=0$. 
\par Let $U$ be a connected open set of $\R^n$, $U_R\subset B_0(R+1)$, containing no other point of the cluster apart from $\check{x}_i$ and $\check{x}_j$. For any $0<r<\frac{1}{8}$, we set
$$V_{r,R}=U_R\backslash\left(\overline{B_{\check{x_i}}(r)\bigcup B_{\check{x_j}}(r)}\right).$$
\par For a fixed $x\in B_{\check{x}_i}\left(\frac{1}{4}\right)\backslash V_{r,R}$, (\ref{sectype}) implies that $\check{u}_\alpha(x)\to 0$ as $\alpha\to\infty$. It follows from Lemma \ref{lem former lemma 9} and (\ref{lower bound}) that all points of a cluster must be of the same type. 
\par Assuming all points in the cluster are of the first type, then
$$\check{u}_\alpha(0)+\left|\frac{\nabla\check{u}_\alpha(0)}{\check{u}_\alpha(0)}\right|^{\frac{n-2}{2}}=O(1),$$
then by standard elliptic theory there exists $\check{u}:=\lim_{\alpha\to\infty}\check{u}_\alpha$ in $\mathcal{C}^1(B_0(R))$, $R>0$. Repeating the reasoning of Lemma \ref{lem: former lemma 2} or Lemma \ref{lem: former lemma 4}, we know that  
$$\Delta_\xi \check{u}=f(x_1)\check{u}^{q-1}.$$
However, $\check{u}$ must have at least two separate maxima, at 0 and $\check{x}_2$, which leads to a contradiction by the classification result of Caffarelli, Gidas and Spruck \cite{CaffGidSpr}. 
\par Therefore $\check{u}_\alpha(\check{x}_{i,\alpha})\to\infty$, for any $i=\overline{1,M_{2R,\alpha}}$. Up to a subsequence
$$\frac{\check{u}_\alpha(\check{x}_{i,\alpha})}{\check{u}_\alpha(0)}\to\mu_i>0\quad\text{as}\quad\alpha\to\infty.$$
We fix $R>0$ and assume, without loss of generality, that $(M_{2R,\alpha})_\alpha$ is a constant denoted by $M_{2R}$. Using Lemma \ref{lem former lemma 9} and standard elliptic theory, we pass to a subsequence and get
$$\check{u}_\alpha(0)\check{u}_\alpha(x)\to\check{G}(x)$$
in $\mathcal{C}^1_{loc}\left(B_0(R)\backslash\{\check{x}_i\}_{i=\overline{1,N_{2R}}}\right)$ for $\alpha\to\infty$, with
$$
\begin{array}{r l}
\check{G}(x)&=\sum_{i=1}^{p}\frac{\lambda_i}{\mu_i|x-\check{x}_i|^{n-2}}+\check{H}(x)\\&=\frac{\lambda_1}{|x|^{n-2}}+\left(\sum_{i=2}^{p}\frac{\lambda_i}{\mu_i|x-\check{x}_i|^{n-2}}+\check{H}(x)\right)
\end{array}
$$
Here, $\check{H}$ is harmonic on $B_0(R)$, and $2\leq p\leq M_{2R}$ such that $|\check{x}_p|\leq R$ as $|\check{x}_{p+1}|>R$. If we apply Proposition \ref{prop} to the blow-up sequence $x_\alpha=x_{1,\alpha}$ with $\rho_\alpha=\frac{1}{16}d_\alpha$, we obtain
$$\hat{H}(0):=\sum_{i=2}^{p}\frac{\lambda_i}{\mu_i|\check{x}_i|^{n-2}}+\check{H}(0)=0$$
Since $\hat{H}(x)-\frac{\lambda_2}{\mu_2|x-\check{x}_{2}|}=\check{G}(x)-\frac{\lambda_1}{|x|^{n-2}}-\frac{\lambda_2}{\mu_2|x-\check{x}_2|^{n-2}}$ is harmonic in the ball $B_0(R)\backslash\{\check{x}_i\}_{i\in\overline{2,N_{2R}}}$ and $\check{G}\geq 0$, then as a consequence of the maximum principle, by considering a minimum on $\partial B_0(R),$ we see that
$$\hat{H}(0)\geq\frac{\lambda_2}{\mu_2}-\frac{\lambda_1}{R^{n-2}}-\frac{\lambda_2}{\mu_2(R-1)^{n-2}}.$$ 
Choosing $R>0$ large enough, we ensure that $\hat{H}>0,$ which contradicts Theorem \ref{lem: former lemma 7}. Consequently, $u_\alpha$ admits no concentration points and is therefore uniformly bounded in $\mathcal{C}^1$.
\end{proof}
\begin{lemma}\label{lem strong stability}
Assuming equation (\ref{EM}) associated to $\tilde{a}$ admits a supersolution and that $\Delta_g+h$ is coercive, then for any $0<T<\inf_M\tilde{a}$ and any equation with parameters in $\mathcal{E}_{\theta,T}$ (as in Theorem \ref{theo stability eq}), there exists a constant $C_{\theta,T}=C(n,\theta,T)>0$ such that, for any $||Y||_{L^\infty}\leq C_{\theta,T}$ and $||b||_{L^\infty}\leq C_{\theta,T}$, we may find a smallest real eigenvalue $\lambda_0>0$, where $\lambda_0$ is as in Lemma \ref{lem weak stability}.
\end{lemma}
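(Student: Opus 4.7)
The plan is to argue by contradiction, using the strict supersolution $\tilde u$ associated to $\tilde a$ to rule out $\lambda_0 = 0$ via a Fredholm-type projection, exploiting the fact that for small $\|Y\|_{L^\infty}$ and $\|b\|_{L^\infty}$ the equation is a small perturbation of one that is pointwise concave in $u$. Lemma \ref{lem weak stability} already gives $\lambda_0 \geq 0$, so only equality needs to be excluded. By hypothesis the $\tilde a$-equation admits a supersolution, so Theorem \ref{theo: existence of sol to Lich type eq} produces a minimal positive solution $\tilde u$. Denoting by $F_a$ the left-hand side of (\ref{EM}), the inclusion $a \leq T < \inf_M \tilde a$ yields
\[
F_a(\tilde u) = \frac{\tilde a - a}{\tilde u^{q+1}} \geq \frac{\inf_M \tilde a - T}{(\sup_M \tilde u)^{q+1}} =: c_0 > 0,
\]
so $\tilde u$ is a uniform strict supersolution of the $a$-equation. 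Applying Theorem \ref{theo: existence of sol to Lich type eq} again gives a minimal solution $u \leq \tilde u$, with $u < \tilde u$ pointwise by the strong maximum principle; Theorem \ref{theo stability eq} then gives uniform $\mathcal{C}^2$ control on both, so $v := \tilde u - u$ is bounded in $\mathcal{C}^2(M)$ by a constant depending only on $n$, $\theta$, $T$.

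Assume next $\lambda_0 = 0$. Applying the Krein--Rutman theorem (as in the proof of Lemma \ref{lem weak stability}) to the formal $L^2$-adjoint $L^*$ yields a positive eigenfunction $\varphi_0^* > 0$ with $L^* \varphi_0^* = 0$. The integral Taylor expansion
\[
F_a(\tilde u) = F_a(u) + L v + \int_0^1 (1-t)\, F_a''(u + tv)(v, v)\, dt,
\]
together with $F_a(u) = 0$ and $\int_M \varphi_0^* L v\, dv_g = \int_M (L^* \varphi_0^*) v\, dv_g = 0$, produces the key identity
\[
\int_M \varphi_0^* F_a(\tilde u)\, dv_g = \int_0^1 (1-t) \int_M \varphi_0^* F_a''(u + tv)(v, v)\, dv_g\, dt.
\]
The left-hand side is bounded below by some positive $c_1 = c_1(n, \theta, T, \tilde a, \varphi_0^*) > 0$, uniformly on $\mathcal{E}_{\theta, T}$, so the goal becomes to bound the right-hand side strictly above by $c_1$.

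The heart of the argument is that the main quadratic part of $F_a''$ is negative definite. A direct computation yields
\[
F_a''(w)(v, v) = -\bigl[(q-1)(q-2) f w^{q-3} + (q+1)(q+2) a w^{-q-3}\bigr] v^2 + 2 b w^{-3} v^2 + \mathcal{G}(w, v, \nabla v),
\]
where $\mathcal{G}$ is a quadratic form in $(v, \nabla v)$ whose coefficients are pointwise of order $O(\|Y\|_{L^\infty})$ or $O(\|Y\|_{L^\infty}^2)$. Using $f, a \geq \theta$ and the two-sided bounds on $w \in [u, \tilde u]$ furnished by (\ref{varepsilon}) and Theorem \ref{theo stability eq}, the bracketed main term is pointwise bounded above by $-A_0 v^2$ with $A_0 = A_0(n, \theta, T) > 0$. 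The $b$-contribution is $O(\|b\|_{L^\infty}) v^2$, and the terms in $\mathcal{G}$ are handled by Cauchy--Schwarz--Young, the a priori $\mathcal{C}^2$ bound on $v$ giving $\|\nabla v\|_{L^\infty} \leq C(n, \theta, T)$. Integrating against $\varphi_0^*$ and in $t$, and discarding the favourable negative main term, one obtains
\[
\int_M \varphi_0^* F_a(\tilde u)\, dv_g \leq C(n, \theta, T)\bigl(\|Y\|_{L^\infty} + \|b\|_{L^\infty}\bigr),
\]
so setting $C_{\theta, T} := c_1 / (4 C(n, \theta, T))$ contradicts the lower bound $c_1$ as soon as the smallness assumption is enforced, ruling out $\lambda_0 = 0$.

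The main obstacle is precisely the analysis of $\mathcal{G}$. In the scalar case $Y = 0$, $b = 0$ the map $F_a$ is pointwise concave in $u$, the Taylor remainder has a definite sign, and the contradiction is immediate. The focusing gradient non-linearity $|Y \cdot \nabla u|^2 / u^{q+3}$ destroys pointwise concavity, and a careful Cauchy--Schwarz--Young book-keeping on $\mathcal{G}$, combined with the global $\mathcal{C}^2$ estimate of Theorem \ref{theo stability eq}, is needed to show that its total contribution is $O(\|Y\|_{L^\infty})$ with a constant depending only on $n$, $\theta$, $T$; this is the only delicate point of the proof.
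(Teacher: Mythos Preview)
Your proof is correct and takes a genuinely different route from the paper. The paper argues by continuity from the symmetric case: it introduces the one-parameter family $E_s$ obtained by replacing $(b,Y)$ with $(sb,sY)$, invokes Premoselli's variational result (which uses the coercivity of $\Delta_g+h$) to get strict stability $\lambda_0>0$ at $s=0$, then shows $u_s\to u_0$ by a supersolution construction and establishes $|\lambda_s-\lambda_0|=O(s)$ by pairing $\varphi_0 L_s\varphi_s-\varphi_s L_0\varphi_0$. Your argument bypasses Premoselli entirely: you test the second-order Taylor remainder of $F_a$ between $u$ and the strict supersolution $\tilde u$ against the adjoint null eigenfunction $\varphi_0^*$, and the pointwise concavity of the dominant nonlinearities forces the sign contradiction directly. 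This is more self-contained and, notably, does not appear to use the coercivity hypothesis at all.

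Two small points of presentation. First, writing $c_1=c_1(\ldots,\varphi_0^*)$ while also claiming uniformity on $\mathcal{E}_{\theta,T}$ looks inconsistent; the clean observation is that both your lower bound on the left and your upper bound on the right carry the common factor $\int_M\varphi_0^*$, which cancels, so the smallness threshold is genuinely independent of $\varphi_0^*$. Second, invoking Theorem~\ref{theo stability eq} for $\tilde u$ strictly requires the parameters of the $\tilde a$-equation to lie in some $\mathcal{E}_{\theta,T'}$; since $\tilde a$ is fixed this just means enlarging $T$ to $\max(T,\|\tilde a\|_{\mathcal{C}^{0,\alpha}})$, which is harmless.
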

\begin{proof}
Given any parameters $(f,a,b,c,d,h,Y)$ in $\mathcal{E}_{\theta,T}$ and additionally asking for $Y$ and $b$ to be sufficiently small in $L^\infty$ norm with respect to $\theta$ and $T$, we aim to prove that minimal solutions to the Lichnerowicz-type equation change continuously with their parameters. In order to do this, we study the sign of the smallest real eigenvalue associated to the linearisation around a minimal solution and show that it is positive by comparing it to the smallest real eigenvalue at $b=0$ and $Y=0$. Indeed, let $s>0$ a real number and $E_s$ the equation
\begin{equation}\label{eq theta}
\begin{array}{c}
E_s(u_s):=\Delta_g u_s+hu_s-fu_s^{q-1}-\frac{a}{u_s^{q+1}}+\frac{s b}{u_s}+c\langle\nabla u_s,s Y\rangle\left(\frac{d}{u_s^2}+\frac{1}{u_s^{q+2}}\right)\\+\frac{\langle\nabla u_s,s Y\rangle^2}{u_s^{q+3}}=0,
\end{array}
\end{equation}
with $u_s$ its minimal solution. Let $L_s$ be the linearisation of $E_s$ around $u_s$,
\begin{equation}
\begin{array}{c}\label{eq varphi}
\Delta_g\varphi_s+\Big[h-(q-1)fu_s^{q-2}+(q+1)\frac{a}{u_s^{q+2}}-\frac{s b}{u_s}-c\langle\nabla u_s,s Y\rangle\left(\frac{2d}{u_s^3}+\frac{q+2}{u_s^{q+3}}\right)\\-(q+3)\frac{\langle\nabla u_s,s Y\rangle^2}{u_s^{q+4}}\Big]\varphi_s+\langle\nabla\varphi_s,s Y \rangle\Big[c\left(\frac{d}{u_s^2}+\frac{1}{u_s^{q+2}}\right)+\frac{2\langle\nabla u_s,s Y\rangle}{u_s^{q+3}}\Big]=\lambda_s\varphi_s,
\end{array}
\end{equation}
with $\lambda_s\geq 0$ the smallest real eigenvalue, $\varphi_s>0$ the associated eigenfunction, normalised such that $||\varphi_s||_{L^2}=1.$ Note that the linear equations $L_s$ are stable, in the sense that $\varphi_s$ is \textit{a priori} uniformly bounded in $\mathcal{C}^1$. This follows from the fact that the $u_s$ is uniformly bounded. We may also suppose that $\lambda_s$ is uniformly bounded, because if $\lambda_s\to\infty$, then it is clear that $\lambda_s>0$. 
\par As Premoselli proved by way of a variational argument \cite{Pre14}, the equation $E_0$ is strictly stable, in the sense that its corresponding smallest real eigenvalue is positive. It uses the coerciveness of $\Delta_g+h$. We emphasize that his argument makes use of the fact that $E_0$ is symmetric, which is not the case for our more general equations. The strict stability implies continuity, \textit{i.e.} that $u_s\to u_0$, with $u_0$ the minimal value. Indeed, let $u_s\to \tilde{u}$ another solution of $E_0$. Clearly, $\tilde{u}>u_0$. Let $\tilde{u}_\delta=u_0+\delta\varphi_0$. Note that
$$
\begin{array}{r l}
E_s(\tilde{u}_\delta)&=E_0(\tilde{u}_\delta)+\frac{s b}{u_\delta}+c\langle\nabla u_\delta,s Y\rangle\left(\frac{d}{u_\delta^2}+\frac{1}{u_\delta^{q+2}}\right)+\frac{\langle\nabla u_\delta,s Y\rangle^2}{u_\delta^{q+3}}\\
&=E_0(u_0)+\lambda_0\delta\varphi_0+o(\delta)+\frac{s b}{u_\delta}+s c\langle\nabla u_\delta, Y\rangle\left(\frac{d}{u_\delta^2}+\frac{1}{u_\delta^{q+2}}\right)\\
&\quad +s^2\frac{\langle\nabla u_\delta, Y\rangle^2}{u_\delta^{q+3}}
\end{array}
$$
If we fix $\delta>0$ sufficiently small, the error terms $|o(\delta)|\leq \frac{\lambda_0\delta\varphi_0}{3}$ and $\tilde{u}_\delta<\tilde{u}\leq u_s$, $\forall s$. Then, by taking $s$ sufficiently close to $0$, we get that the rest of the terms are also smaller in absolute size than $\frac{\lambda_0\delta\varphi_0}{3}$. Consequently, $E_s(\tilde{u}_\delta)>0$, so $\tilde{u}_\delta$ is a supersolution of $E_s$ that is smaller than the minimal solution $u_s$. 
\par Since $u_s\to u_0$, we also get that $\lambda_s\to\lambda_0$, so for $s$ small, the first eigenvalue $\lambda_s>0$. We would like to obtain that there exists $s_{\theta,T}>0$ such that, for any $0\leq s<s_{\theta,T}$, the minimal eigenvalue corresponding to $L_s$ is positive, where $(a,b,c,d,f,h,Y)\in\mathcal{E}_{\theta,T}$. In other words, we attempt to set a size for $Y$ and $b$, depending on $\theta$ and $T$ (and $n$), such that the resulting equations are strictly stable.
\par First, there exists $\delta_{\theta,T}>0$ such that if $Y=0$, $b=0$ and the equation's parameters are found in $\mathcal{E}_{\theta,T}$, then $\lambda_0>\delta_{\theta,T}.$ We let $u_s=u_0+\varepsilon_s v_s$ such that $||v_s||_{L^2}=1$, $\varepsilon_s\in\R$. Note that $\varepsilon_s\to 0$ as $s\to 0$.
\par We begin by analyzing the difference in size between $\varepsilon_s$ and $s$, or equivalently between $||u_s-u_0||_{L^\infty}$ and $s$. Let
$$E_s=E_0+s M_s,$$
where 
$$M_s(u_s)=\frac{b}{u_s}+c\langle\nabla u_s, Y\rangle\left(\frac{d}{u_s^2}+\frac{1}{u_s^{q+2}}\right)+s\frac{\langle\nabla u_s, Y\rangle^2}{u_s^{q+3}}.$$
Recall that
\begin{equation}\label{Eq theta develop}
E_0(u_s)=-s M_s(u_s)
\end{equation}
where 
$$E_0(u_s)=L_0(u_s-u_0)+O(|u_s-u_0|^2).$$
Since $u_0$ is a solution of $E_0$ and the operator $L_0$ is coercive, with minimal eigenvalue $\lambda_0$, then by testing (\ref{Eq theta develop}) against $(u_s-u_0)$, we see that
$$\lambda_0\left(1+o(1)\right)||u_s-u_0||_{L^2}^2\leq -s \int_M M_s(u_s)(u_s-u_0)\leq s||M_s(u_s)||_{L^2}||u_s-u_0||_{L^2}.$$
The size of $M_s(u_s)$ is determined by a constant depending on $\theta$ and $T$. Therefore, we may write
$$(1+o(1))\varepsilon_s =(1+o(1))||u_s-u_0||_{L^2}\leq s \frac{C}{\lambda_0}$$
Finally, in order to compare $\lambda_s$ to $\lambda_0$, extract the terms of order $s$ from the quantity $\int_M \varphi_0L_s(\varphi_s)-\varphi_s L_0(\varphi_0)$,
\begin{equation}\label{eq L theta order varepsilon theta}
\begin{array}{c}
-\int_M\frac{s b}{u_0}\varphi_0\varphi_s-\int_Mc\langle\nabla u_0,s Y\rangle\left(\frac{2d}{u_0^3}+\frac{q+3}{u_0^{q+3}}\right)\varphi_s\varphi_0-\int_M(q+3)\frac{\langle\nabla u_s,s Y\rangle^2}{u_s^{q+4}}\varphi_s\varphi_0\\
\varepsilon_s\int_M\left[(q-1)(q-2)fu_0^{q-3}-(q+1)(q+2)\frac{a}{u_0^{q+3}}\right]v_s \varphi_s\varphi_0\\
+O(s^2)=(\lambda_\theta-\lambda_0)\int_M\varphi_s\varphi_0,
\end{array}
\end{equation}
so there exists a constant $C$ depending on $\theta$ and $T$ such that
\begin{equation}
|\lambda_s-\lambda_0|\leq s C\left(\int_M\varphi_s\varphi_0\right)^{-1}.
\end{equation}
As $\varphi_s=\varphi_0+o(1)$ and the $L^2$ norm of $\varphi_0$ is $1$, we may choose $s$ small enough so that $|\lambda_s-\lambda_0|\geq \frac{\delta_{\theta,T}}{2}$, and thus $\lambda_s>0$.
\end{proof}
\section{Existence of solutions to the system}\label{theo: system}
\subsection{The proof of the main theorem}
The following is a useful estimate we can find in \cite{IseMur}; it plays a crucial role in ensuring the necessary compacity of the sequence $W_\alpha$ in the main theorem.
\begin{prop}\label{prop isemur} Let $(M,g)$ be a closed Riemannian manifold of dimension $n\geq 3$ such that $g$ has no conformal Killing fields. Let $X$ be a smooth vector field in $M$. Then there exists a unique solution $W$ of
$$\Delta_{g, conf}W=X.$$
Also, for $0<\gamma<1$, there exists a constant $C_0>0$ that depends only on $n$ and $g$ such that
$$||W||_{\mathcal{C}^{1,\gamma}}\leq C_0||X||_\infty.$$
\end{prop}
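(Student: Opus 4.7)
The plan is to treat $\Delta_{g,conf} = \text{div}_g \circ \mathcal{L}_g$ as a linear, second-order elliptic operator acting on vector fields and to combine Fredholm theory with standard elliptic regularity. First I would verify ellipticity: a short computation with the principal symbol of $\mathcal{L}_g$ shows that $\Delta_{g,conf}$ is elliptic on $1$-forms (or, equivalently, on vector fields after raising indices with $g$). Integration by parts against a test vector field $V$ yields
\begin{equation*}
\int_M \langle \Delta_{g,conf} W, V\rangle_g\,dv_g = -\tfrac{1}{2}\int_M \langle \mathcal{L}_g W,\mathcal{L}_g V\rangle_g\,dv_g,
\end{equation*}
so the operator is formally self-adjoint and its kernel consists exactly of conformal Killing fields. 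Since $g$ admits none by assumption, the kernel is trivial, and self-adjointness then forces the cokernel to be trivial as well.

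Next I would invoke Fredholm theory. Choose $p \in (1,\infty)$ large (to be specified below). The operator $\Delta_{g,conf}: W^{2,p}(TM) \to L^p(TM)$ is bounded and Fredholm of index zero by standard elliptic theory on closed manifolds. Combined with trivial kernel and cokernel, this makes it a bijection. The open mapping theorem then yields a bounded inverse, which gives both existence/uniqueness of $W$ and the a priori bound
\begin{equation*}
\|W\|_{W^{2,p}} \leq C(n,g,p)\,\|X\|_{L^p}.
\end{equation*}
Here $C$ depends only on $n$, $g$ and $p$; no $\|W\|_{L^p}$ term is needed precisely because of the kernel triviality. (If one prefers to avoid direct appeal to open mapping, one can derive the same inequality by the standard contradiction/blow-up argument: negating it gives a sequence $W_k$ with $\|W_k\|_{W^{2,p}}=1$ and $\|\Delta_{g,conf} W_k\|_{L^p}\to 0$, which by Rellich compactness extracts a nonzero limit in the kernel, contradiction.)

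To upgrade to $\mathcal{C}^{1,\gamma}$ I would fix $p$ large enough that the Morrey embedding $W^{2,p}(M) \hookrightarrow \mathcal{C}^{1,\gamma}(M)$ holds, namely $p > n/(1-\gamma)$. Combined with $\|X\|_{L^p} \leq \text{vol}_g(M)^{1/p}\|X\|_\infty$, chaining the inequalities gives
\begin{equation*}
\|W\|_{\mathcal{C}^{1,\gamma}} \leq C_{\text{Sob}}(n,g,p,\gamma)\,\|W\|_{W^{2,p}} \leq C_0(n,g,\gamma)\,\|X\|_\infty,
\end{equation*}
which is the desired estimate. I do not expect a serious obstacle here: the only delicate point is ensuring that the constant genuinely depends only on $n$ and $g$ (and $\gamma$, which is fixed), and this is built into the elliptic $L^p$ estimate on a fixed closed manifold together with the fact that once kernel and cokernel are trivial, the inverse has a norm determined purely by the geometry. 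The serviceability of the result in the sequel comes from this clean $L^\infty$-to-$\mathcal{C}^{1,\gamma}$ bound, which will be applied with $X$ built out of $u$, $\tilde V$, $\pi\nabla\psi$, etc., in the momentum equation.
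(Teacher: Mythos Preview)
Your argument is correct and follows the standard route: ellipticity of $\Delta_{g,conf}$, self-adjointness via the integration-by-parts identity, kernel $=$ conformal Killing fields (hence trivial), Fredholm index zero giving a bounded inverse $L^p\to W^{2,p}$, and finally Morrey embedding with $p>n/(1-\gamma)$ to reach $\mathcal{C}^{1,\gamma}$. There is no gap.

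As for comparison: the paper does not actually prove this proposition. It is stated with the preamble ``The following is a useful estimate we can find in \cite{IseMur}'' and then used as a black box. So your sketch supplies what the paper simply imports from the literature. The approach you outline is essentially the one underlying the cited reference (Isenberg--\'O~Murchadha) and is the canonical proof; there is no alternative route being taken by the paper that differs from yours.
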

\begin{rem}
As a consequence, there exists a constant $C_1=C_1(n,g)$ such that
\begin{equation}\label{lxest}
||\mathcal{L}_g W||_{\mathcal{C}^{0,\gamma}}\leq C_1||X||_\infty
\end{equation}
\end{rem}
Let $(M,g)$ be a closed Riemannian manifold of dimension $n\in\{3,4,5\}$ such that $g$ has no conformal Killing fields. Let $b$, $c$, $d$, $f$, $h$, $\rho_1$, $\rho_2$, $\rho_3$ be smooth functions on $M$ and let $Y$ and $\Psi$ be smooth vector fields defined on $M$. Let $0<\gamma<1$. 
\par Assume that $\Delta_g+h$ is coercive. Assume that $f>0$, $\rho_1>0$ and $|\nabla \rho_3|<(2C_1)^{-1}$, where $C_1$ is defined in (\ref{lxest}). 
\par Consider the coupled system
\begin{equation}\label{system in proof}
\begin{cases}
\displaystyle \Delta_g u+hu
    &\displaystyle= fu^{q-1}+\frac{\rho_1+|\Psi+\rho_2\mathcal{L}_gW|^2_g}{u^{q+1}}\\ 
    &\displaystyle -\frac{b}{u}-c\langle\nabla u,Y\rangle \left(\frac{d}{u^2}+\frac{1}{u^{q+2}}\right)-\frac{\langle\nabla u,Y\rangle^2}{u^{q+3}}\\
    \displaystyle div_{g}\left(\rho_3\mathcal{L}_g W\right) 
    &\displaystyle =\mathcal{R}(u),
\end{cases}
\end{equation}
where $\mathcal{R}$ is an operator verifying
\begin{equation}\label{CR}
\mathcal{R}(u)\leq C_\mathcal{R}\left(1+\frac{||u||_{\mathcal{C}^2}^2}{(\inf_Mu)^2}\right)
\end{equation}
for a constant $C_\mathcal{R}>0$. 
\par We fix 
\begin{equation}
\label{theta}
\theta=\min(\inf_M\rho_1,\inf_M f),
\end{equation}
and 
\begin{equation}\label{T2}
T=\max(||f||_{\mathcal{C}^{1,\eta}}, ||\rho_1||_{\mathcal{C}^{0,\gamma}}, ||c||_{\mathcal{C}^{0,\gamma}}, ||d||_{\mathcal{C}^{0,\gamma}}, ||h||_{\mathcal{C}^{0,\gamma}}).
\end{equation}
Let 
\begin{equation}\label{bound m}
M=\ln S_{\theta,2T},
\end{equation}
with $S_{\theta,2T}$ a constant as in Theorem \ref{theo stability eq}.
The following theorem is the main result of the present paper.
\begin{theo}\label{thm plus 1}
Assume there exists a smooth positive function $\tilde{a}$ for which
\begin{equation}\label{licheq}
    \Delta_g \tilde{u}+ h\tilde{u}=f\tilde{u}^{q-1}+\frac{\tilde{a}}{\tilde{u}^{q+1}}
\end{equation}
admits a positive supersolution $\tilde{u}$. Assume that $\rho_1<\tilde{a}$ and let $\omega=\inf_M(\tilde{a}-\rho_1)$. Then there exists
\begin{equation}
\delta=\delta(\omega,\theta,T)>0
\end{equation}
such that if 
\begin{equation}
||b||_{\mathcal{C}^{0,\gamma}}+ ||Y||_{\mathcal{C}^{0,\gamma}}+ ||\Psi||_{\mathcal{C}^{0,\gamma}}+||\rho_2||_{\mathcal{C}^{0,\gamma}}+ C_\mathcal{R}\leq \delta,
\end{equation}
the system (\ref{system in proof}) admits a solution $(u,W)$, with $u$ a smooth positive function and $W$ a smooth vector field.
\end{theo}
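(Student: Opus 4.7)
The strategy is to construct the solution $(u, W)$ via a Schauder fixed-point argument in the vector-field variable $W$ alone. Let $K$ be the closed ball of radius $R_\star$ in $\mathcal{C}^{1,\gamma}(M)$, with $R_\star$ to be chosen, and define a map $\mathcal{T} : K \to \mathcal{C}^{1,\gamma}$ in two steps. Given $W \in K$, first solve the Lichnerowicz-type scalar equation (\ref{EM}) with $a$ replaced by $a_W := \rho_1 + |\Psi + \rho_2 \mathcal{L}_g W|_g^2$ to obtain a minimal positive solution $u_W$ via Theorem \ref{theo: existence of sol to Lich type eq}; then define $\mathcal{T}(W) := W^\star$, where $W^\star$ is the unique solution of $\text{div}_g(\rho_3 \mathcal{L}_g W^\star) = \mathcal{R}(u_W)$. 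A fixed point of $\mathcal{T}$ produces the desired solution of (\ref{system in proof}).

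To check that $\mathcal{T}$ sends $K$ into itself, I verify the two steps in turn. Since $\omega = \inf_M(\tilde a - \rho_1) > 0$ and $a_W - \rho_1 \leq (\|\Psi\|_\infty + \|\rho_2\|_\infty C_1 R_\star)^2$, taking $\delta$ small gives $a_W \leq \tilde a$ pointwise on $M$. Combined with the smallness of $\|b\|_\infty$ and $\|Y\|_\infty$, $\tilde u$ persists as a supersolution of the modified scalar equation, so Theorem \ref{theo: existence of sol to Lich type eq} produces $u_W$ with $\varepsilon_0 \leq u_W \leq \tilde u$. Theorem \ref{theo stability eq} then yields a uniform bound $\|u_W\|_{\mathcal{C}^2} \leq S_{\theta, 2T}$, since the parameter tuple $(f, a_W, b, c, d, h, Y)$ lies in $\mathcal{E}_{\theta, 2T}$ for $\delta$ small enough. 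For the second step, I expand $\text{div}_g(\rho_3 \mathcal{L}_g W^\star) = \rho_3 \Delta_{g,\text{conf}} W^\star + \langle \nabla \rho_3, \mathcal{L}_g W^\star\rangle$ and solve by a Banach contraction based on Proposition \ref{prop isemur}: the hypothesis $|\nabla \rho_3| < (2C_1)^{-1}$ provides the required contraction constant, yielding $W^\star$ with $\|W^\star\|_{\mathcal{C}^{1,\gamma}} \leq 2 C_0 \|\mathcal{R}(u_W)\|_\infty$. Combining the $\mathcal{C}^2$ bound just obtained, the lower bound (\ref{varepsilon}), and the growth assumption (\ref{CR}), one gets $\|W^\star\|_{\mathcal{C}^{1,\gamma}} \leq C(\theta, T)\, C_{\mathcal{R}}$, so $\mathcal{T}(K) \subset K$ provided $C_{\mathcal{R}} \leq \delta$ is chosen small relative to $R_\star$.

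The main obstacle is the continuity of $\mathcal{T}$, which reduces to continuity of $W \mapsto u_W$. Here Lemma \ref{lem strong stability} is the decisive tool: when $\|b\|_\infty + \|Y\|_\infty$ is small enough relative to $\theta$ and $T$, the linearization of the scalar equation at its minimal solution admits a strictly positive smallest real eigenvalue $\lambda_0 > 0$, hence is invertible. Strict stability, together with the implicit function theorem applied in the nonsymmetric setting, yields local Lipschitz continuity of $W \mapsto u_W$ in $\mathcal{C}^2$. To upgrade this to a global statement on $K$, I use a compactness/uniqueness argument: along any sequence $W_n \to W$ in $\mathcal{C}^{1,\gamma}$, the family $u_{W_n}$ is uniformly bounded in $\mathcal{C}^2$ and hence precompact in $\mathcal{C}^{1,\gamma'}$ for any $\gamma' < \gamma$; any subsequential limit is a positive solution of the Lichnerowicz-type equation associated with the limit coefficient $a_W$, and the strict stability of $u_W$ rules out any other solution in a $\mathcal{C}^2$-neighborhood, forcing convergence of the full sequence to $u_W$. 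Finally, the uniform $\mathcal{C}^{1,\gamma}$ bound on $\mathcal{T}(W)$ makes $\mathcal{T}(K)$ precompact in $\mathcal{C}^{1,\gamma'}$, and Schauder's fixed-point theorem applied in this weaker topology furnishes the sought fixed point. Smoothness of the resulting $(u, W)$ follows by standard elliptic bootstrap.
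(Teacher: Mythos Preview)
Your overall architecture is essentially the paper's: a Schauder fixed-point argument built on Theorem~\ref{theo: existence of sol to Lich type eq} for the scalar step, Proposition~\ref{prop isemur} together with the hypothesis $|\nabla\rho_3|<(2C_1)^{-1}$ for the vector step, and Theorem~\ref{theo stability eq} plus Lemma~\ref{lem strong stability} for the a~priori bounds and continuity. The only structural difference is that you iterate in the $W$-variable whereas the paper iterates in $\varphi=\ln u$; these are the two halves of the same coupling and neither buys anything the other does not.

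There is, however, a genuine gap in your continuity argument for $W\mapsto u_W$. You write that along $W_n\to W$ any subsequential limit $\hat u$ of $u_{W_n}$ solves the scalar equation with coefficient $a_W$, and that ``strict stability of $u_W$ rules out any other solution in a $\mathcal{C}^2$-neighborhood, forcing convergence of the full sequence to $u_W$.'' But strict stability at $u_W$ only gives uniqueness \emph{near} $u_W$; nothing you have said places $\hat u$ in that neighbourhood. The equation for $a_W$ may well have several solutions, and you must explain why the limit of minimal solutions is the minimal one. The paper closes this gap by combining minimality with strict stability: since $\hat u$ is a solution for $a_W$ one has $u_W\le\hat u$; if $u_W\neq\hat u$, one uses $\lambda_0>0$ to produce $\hat u_\mu:=u_W+\mu\psi$ (with $\psi>0$ the first eigenfunction) which, for small $\mu$, is a supersolution of the scalar equation with a slightly larger coefficient $a$ and satisfies $u_W<\hat u_\mu<\hat u$. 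For $n$ large, $a_{W_n}$ lies below that slightly larger coefficient, so $\hat u_\mu$ is a supersolution for $a_{W_n}$ strictly below $u_{W_n}\to\hat u$, contradicting the minimality of $u_{W_n}$. Your implicit-function-theorem sentence does not substitute for this step; without it the argument is incomplete.
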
 
\begin{rem}
We can use a result by Hebey, Pacard and Pollack (\cite{HebPacPol08}, Corollary 3.1) in order to ensure the existence of a supersolution $\tilde{u}$. There exists a constant $C=C(n,h)$, $C>0$ such that if $\tilde{a}$ is a smooth positive function verifying
\begin{equation}
||\tilde{a}||_{L^1(M)}\leq C(n,h)\left(\max_M |f|\right)^{1-n},
\end{equation}
then (\ref{licheq}) accepts a smooth positive solution. 
\end{rem}
\begin{proof}[Proof of Theorem \ref{thm plus 1}:]
The proof of the theorem consists of a fixed-point argument. Formally, we define the operator
$$\Phi:\varphi\to\ln u\left(\mathcal{L}_gW(e^\varphi)\right),$$
where $W(e^\varphi)$ solves the second equation of (\ref{system in proof}) for a fixed $u=e^\varphi$ and where $u\left(\mathcal{L}_gW(e^\varphi)\right)$  is the solution of the scalar equation of (\ref{system in proof}) constructed in Section 2 for a fixed $W(e^\varphi)$. In order to apply Schauder's fixed point theorem, we show that $\Phi:B_M\to B_M$, $B_M:=\{\varphi\in\mathcal{C}^2(M), ||\varphi||_{\mathcal{C}^2}\leq M\}$, where $M$ is defined as in (\ref{bound m}), and that $\Phi$ is continuous and compact.
\par We first want to prove that
\begin{equation}\label{smaller than a tilde}
\rho_1+|\Psi+\rho_2\mathcal{L}_gW(e^\varphi)|^2_g<\tilde{a}
\end{equation}
to ensure that $\Phi(\varphi)$ is well defined, with $\tilde{u}$ from (\ref{licheq}) a supersolution.
By (\ref{lxest}), we have
\begin{equation}\label{l c gamma}
||\mathcal{L}_g W(e^\varphi)||_{\mathcal{C}^{0,\gamma}}\leq C_1\left(||\nabla\rho_3||_{L^\infty}||\mathcal{L}_g W(e^\varphi)||_{L^\infty}+||\mathcal{R}(e^\varphi)||_{L^\infty}\right)
\end{equation}
and thanks to (\ref{CR}) we see that
\begin{equation}
\begin{array}{c}
\rho_1+|\Psi+\rho_2\mathcal{L}_gW(e^\varphi)|^2_g\leq \rho_1+ 2||\Psi||_{L^\infty}^2\\
+2\left(\frac{C_1 C_{\mathcal{R}}||\rho_2||_{L^\infty}}{1-C_1||\nabla \rho_3||_{L^\infty}}\right)^2 \left(1+\frac{M^2e^{2M}}{e^{2\varepsilon}}\right)^2,
\end{array}
\end{equation}
where $\varepsilon$ is the lower bound of any solution corresponding to $\mathcal{E}_{\theta,2T}$ from Theorem \ref{theo stability eq}. There exists
\begin{equation}\label{delta1}
\delta_1=\delta_1(\omega,\theta,T)>0
\end{equation}
such that if
\begin{equation}
||\Psi||_{\mathcal{C}^{0,\gamma}}+||\rho_2||_{\mathcal{C}^{0,\gamma}}+C_{\mathcal{R}}\leq \delta_1,
\end{equation}
then (\ref{smaller than a tilde}) holds. 
\par In order to use the \textit{a priori} estimate of Section 3 to see that $\Phi:B_M\to B_M$, we need to prove that
\begin{equation}\label{ee}
\theta\leq \rho_1+|\Psi+\rho_2\mathcal{L}_gW(e^\varphi)|^2_g \quad\text{ and }\quad  ||\rho_1+|\Psi+\rho_2\mathcal{L}_gW(e^\varphi)|^2_g||_{\mathcal{C}^{0,\gamma}}\leq 2T.
\end{equation}
\par From (\ref{theta}) we deduce that
\begin{equation}\label{ee1}
\theta\leq \rho_1+|\Psi+\rho_2\mathcal{L}_gW(e^\varphi)|^2_g.
\end{equation}
and thanks to (\ref{l c gamma}) we see that
\begin{equation}
\begin{array}{c}
||\rho_1+|\Psi+\rho_2\mathcal{L}_gW(e^\varphi)|^2_g||_{\mathcal{C}^{0,\gamma}}\leq ||\rho_1||_{\mathcal{C}^{0,\gamma}}+ 2||\Psi||_{\mathcal{C}^{0,\gamma}}^2\\
+2\left(\frac{C_1 C_{\mathcal{R}}||\rho_2||_{\mathcal{C}^{0,\gamma}}}{1-C_1||\nabla \rho_3||_{L^\infty}}\right)^2 \left(1+\frac{M^2e^{2M}}{e^{2\varepsilon}}\right)^2.
\end{array}
\end{equation}
There exists 
\begin{equation}\label{delta2}
\delta_2=\delta_2(\omega,\theta, T)>0
\end{equation}
such that if
\begin{equation}
||\Psi||_{\mathcal{C}^{0,\gamma}}+||\rho_2||_{\mathcal{C}^{0,\gamma}}+C_{\mathcal{R}}\leq \delta_2,
\end{equation}
then
\begin{equation}\label{ee2}
||\rho_1+|\Psi+\rho_2\mathcal{L}_gW(e^\varphi)|^2_g||_{\mathcal{C}^{0,\gamma}}\leq 2T.
\end{equation}
Thanks to (\ref{ee1}) and (\ref{ee2}), the \textit{a priori} estimates in Section 3 imply that
$$||u\left(\mathcal{L}_gW(e^\varphi)\right)||_{\mathcal{C}^2}\leq S_{\theta,2T},$$
so 
\begin{equation*}
\Phi(\varphi)\leq M,
\end{equation*}
where $M$ is as in (\ref{bound m}). We have thus proved that $\Phi$ is well-defined and that $\Phi:B_M\to B_M$.
\par In order to show that $\Phi$ is continuous, we want to check that it holds true for $a\mapsto u(a)$, where $u(a)$ is the minimal solution constructed in Section 2. For all $a<\tilde{a}$, we've established monotony, which ensures that the minimal solutions exist. For $t>0$ small, let us denote by $u_t$ the solutions corresponding to $a(1+t)<\tilde{a}$. Let $u_0$ be the limit of $u_t$ as $t\to 0$; it is also a solution of the Lichnerowicz-type equation associated to $a$. If $u_0\not=u,$ then $u<u_0$. According to Section 3, there exists $C_{\theta,2T}>0$ such that
\begin{equation}
||b||_{\mathcal{C}^{0,\gamma}}+||Y||_{\mathcal{C}^{0,\gamma}}\leq C_{\theta,2T}
\end{equation}
implies that $u$ is strictly stable. We ask that
\begin{equation}
\delta\leq\min\left( \delta_1, \delta_2, C_{\theta,2T}\right)
\end{equation}
where $\delta_1$ is defined in (\ref{delta1}) and $\delta_2$ is defined in (\ref{delta2}). We choose $\mu>0$ small enough such that $u< \hat{u}_\mu< u_0$, where $\hat{u}_\mu:=u+\mu\psi$, $\psi$ a positive eigenfunction at $u$ corresponding to the smallest real eigenvalue.  But $\hat{u}_\mu$ is a supersolution for $a(1+\epsilon)$, $\epsilon>0$ small, which contradicts the monotonicity. Therefore, $\Phi$ is continuous.
\par Lastly, $B_M$ being a closed convex set in $\mathcal{C}^2$, it remains to show that $\Phi(B_M)$ is compact to conclude. From the previous discussion, $\Phi(B_M)\subset B_M$, and is thus bounded in $\mathcal{C}^2$. By standard elliptic theory, we conclude the proof of Theorem \ref{thm 1}.
\end{proof}
\subsection{The case of a metric with conformal Killing fields}
Let us consider the case of a metric $g$ with non-trivial conformal Killing fields associated to it. For $\tilde{V}$ a representative of the drift, the equation
\begin{equation}
div_{\bar{g}}\left(\frac{\tilde{N}}{2}\mathcal{L}_{g}W\right)=\frac{n-1}{n}u^q\mathbf{d}\left(u^{-2q}\tilde{N}div_{g}(u^q\tilde{V})\right)+\pi\nabla\psi
\end{equation} 
admits a solution $W$ if and only if
\begin{equation}
\frac{n-1}{n}\int_Mu^{-2q}\tilde{N}div_{g}(u^q\tilde{V})div_{g}(u^q P)=\int_M\langle\pi\nabla\psi,P\rangle
\end{equation}
for all $P$ conformal Killing fields. Moreover, the solution $W$ is unique up to the addition of a conformal Killing field. Note that the drift is defined modulo conformal Killing fields, so $\tilde{V}$ and $\tilde{V}+P$ are representatives of the same drift for all $P$ conformal Killing fields. We claim that given a vector field $\tilde{V}$ there exists a conformal Killing field $\tilde{Q}$ which is unique up to a true Killing field and such that
\begin{equation}\label{Q eq}
\frac{n-1}{n}\int_M u^{-2q}\tilde{N}div_{g}\left(u^q(\tilde{V}+\tilde{Q})\right)div_{g} (u^qP)=\int_M\langle\pi\nabla\psi,P\rangle.
\end{equation}
By analyzing the homogeneous operator associated to the equation above,
\begin{equation}
\int_Mu^{-2q}\tilde{N}div_{g}\left(u^q(\tilde{V}+\tilde{Q}')\right)div_{g}(u^q P)=0,
\end{equation}
we check that it is positive definite, thus invertible. Consider the functional 
\begin{equation}
F(P)=\int_Mu^{-2q}\tilde{N}div_{g}\left(u^q(\tilde{V}+P)\right)^2\,dv_{g}
\end{equation}
on the finite-dimensional space of conformal Killing fields and note that $\tilde{Q}'$ is stationary for $F$. Since $F$ is quadratic and non-negative definite, stationary points are associated to minimizers. If $\bar{g}$ does not admit any nontrivial true Killing fields, then every conformal Killing field $P$ satisfies $div P\not = 0$ and the quadratic term of $F$ is positive definite. On the other hand, if $g$ admits proper Killing fields, then $F$ descends to a functional on the quotient space and its quadratic order term is again positive definite. So the minimum of $F$ is unique up to a true Killing field.
\par The conformal system proposed by Maxwell \cite{Max14} becomes in this framework
\begin{equation}\label{syst of Maxwell - with CKF}
	\begin{cases}
	\Delta_g u+\frac{n-2}{4(n-1)}(R(g)-|\nabla\psi|_g^2)u -\frac{(n-2)}{4(n-1)}\frac{|U+\mathcal{L}_g W|^2+\pi^2}{u^{q+1}}\\ 
	\displaystyle\quad-\frac{n-2}{4(n-1)}\left[2V(\psi)-\frac{n-1}{n}\left(\tau^*+\frac{\tilde{N}div_g\left(u^q(\tilde{V}+\tilde{Q})\right)}{u^{2q}}\right)^2\right]u^{q-1}=0\\ \\
	 div_g\left(\frac{\tilde{N}}{2}\mathcal{L}_g W\right)=\frac{n-1}{n}u^q \mathbf{d}	\left(\frac{\tilde{N}div_g(u^q\left(\tilde{V}+\tilde{Q}\right))}{2u^{2q}}\right)+\pi\nabla\psi,
	\end{cases}
	\end{equation}
whose solution $(u,W,\tilde{Q})$ is a smooth positive function $u$, a smooth vector field $W$, defined up to a conformal Killing field, and $\tilde{Q}$ a conformal Killing field defined up to a true Killing field.
\par The existence of solutions to (\ref{syst of Maxwell - with CKF}) follows from Theorem \ref{thm 1} and is similar to Corollary \ref{physical result}, with slight modifications. Here,
\begin{equation}
\begin{array}{c}
\rho_1=\frac{n-2}{4(n-1)}\left[\pi-\frac{n-1}{n}\tilde{N}^2 div_g(\tilde{Q}+\tilde{V})\right],\quad b=-\tau^*\tilde{N}div_g(\tilde{V}+\tilde{Q}),\\
Y=\sqrt{\frac{n}{n-2}}\tilde{N}(\tilde{V}+\tilde{Q})
\end{array}
\end{equation}
and
\begin{equation}
C_\mathcal{R}=C_\mathcal{R}(||\tilde{Q}||_{\mathcal{C}^2}).
\end{equation}
Moreover, we define $\theta$ and $T$ as in (\ref{th main}) and (\ref{T main}) respectively, but without the dependency on $\rho_1=\rho_1(\tilde{Q})$, \textit{i.e.}
\begin{equation}
\theta=\min(\inf_M f),
\end{equation}
and 
\begin{equation}
T=\max(||f||_{\mathcal{C}^{1,\eta}}, ||c||_{\mathcal{C}^{0,\gamma}}, ||d||_{\mathcal{C}^{0,\gamma}}, ||h||_{\mathcal{C}^{0,\gamma}}).
\end{equation}
First of all, the stability of the first equation still holds, as in Lemma \ref{lem weak stability} and Lemma \ref{lem strong stability}. In order to apply the last theorem, we need to check that: $\rho_1(\tilde{Q})>\theta$, $||\rho_1(\tilde{Q})||_{\mathcal{C}^{0,\gamma}}<2T$, $||b||_{\mathcal{C}^{0,\gamma}}\leq C_{\theta,2T}$ and $||Y||_{\mathcal{C}^{0,\gamma}}\leq C_{\theta, 2T}$. This translates to
\begin{equation}
div_g\tilde{Q}<\left(\frac{n-2}{4(n-1)}\pi-\theta\right)\frac{n}{n-1}\tilde{N}^{-2}-div_g\tilde{V},
\end{equation}
\begin{equation}
\frac{n-2}{4(n-1)}||\pi||_{\mathcal{C}^{0,\gamma}}+\frac{n-1}{n}||\tilde{N}div_g(\tilde{V}+\tilde{Q})||_{\mathcal{C}^{0,\gamma}}\leq 2T,
\end{equation}
\begin{equation}
||\tau^*\tilde{N}div_g(\tilde{V}+\tilde{Q})||_{\mathcal{C}^{0,\gamma}}\leq C_{\theta,2T},
\end{equation}
and
\begin{equation}
\sqrt{\frac{n}{n-2}}||\tilde{N}(\tilde{V}+\tilde{Q})||\leq C_{\theta,2T}.
\end{equation}
We find bounds on $\tilde{Q}$ depending on $\pi$, $\psi$, $\tilde{N}$, $\tilde{V}$ from (\ref{Q eq}), thereby proving the necessary compactness. Finally, the continuity $(a,b,Y)\to u_{a,b,Y}$ doesn't pose any problem, and the proof mirrors our previous argument for the continuty of $a\to u(a).$ This shows the existence of solutions $(u,W,\tilde{Q})$. 
\section{Annex}
We used the following result repeatedly throughout the paper.
\begin{lemma}\label{lem: A1} Let $u$ be a bounded subharmonic function defined on $\R^n$. If there exists $0<\varepsilon\leq u$ which bounds $u$ from below and $\alpha>0$ such that $u^{-\alpha}$ is a subharmonic function, then $u$ is a constant.
\end{lemma}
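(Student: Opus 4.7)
The plan is to argue by contradiction. Assume $u$ is non-constant; set $M=\sup_{\R^n} u<+\infty$ and $m=\inf_{\R^n} u\geq \varepsilon>0$, so that $M>m$. Both $u$ and $u^{-\alpha}$ are then bounded positive subharmonic functions on $\R^n$, and in particular both satisfy the submean-value inequality
\[
w(x)\leq \frac{1}{|B_R|}\int_{B_R(x)}w\,dy,\qquad x\in\R^n,\ R>0.
\]
The idea is to exploit these two inequalities simultaneously to force $u$ to be close to $M$ on almost all of a large ball and close to $m$ on almost all of a second large ball, then choose the two balls so that they overlap on nearly their entire volume, which is incompatible.

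For any small $\delta>0$, I would first pick $x_\delta,y_\delta\in\R^n$ with $u(x_\delta)>M-\delta$ and $u(y_\delta)<m+\delta$ (such points exist by the very definitions of $M$ and $m$). Put $\mu:=(M+m)/2$. Applying the submean-value inequality for $u$ at $x_\delta$ and splitting the integral over $B_R(x_\delta)$ according to $\{u>\mu\}$ and $\{u\leq \mu\}$, a Chebyshev-type computation yields
\[
\frac{|\{u\leq \mu\}\cap B_R(x_\delta)|}{|B_R|}\leq \frac{2\delta}{M-m}.
\]
Applying the submean-value inequality for $u^{-\alpha}$ at $y_\delta$ and splitting $B_R(y_\delta)$ according to whether $u^{-\alpha}\geq \mu^{-\alpha}$ or $u^{-\alpha}<\mu^{-\alpha}$ (using the uniform upper bound $u^{-\alpha}\leq m^{-\alpha}$, valid because $u\geq m\geq \varepsilon>0$), the analogous computation yields
\[
\frac{|\{u\geq \mu\}\cap B_R(y_\delta)|}{|B_R|}\leq \frac{m^{-\alpha}-(m+\delta)^{-\alpha}}{m^{-\alpha}-\mu^{-\alpha}},
\]
and the right hand side tends to $0$ as $\delta\to 0$.

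To close the argument, I would choose $R=R(\delta)\gg |x_\delta-y_\delta|$; then $B_R(x_\delta)\cap B_R(y_\delta)$ has relative measure $1-o(1)$ in $B_R$ as $\delta\to 0$. But every point of this overlap lies in $\{u\leq\mu\}\cap B_R(x_\delta)$ or in $\{u\geq\mu\}\cap B_R(y_\delta)$, and the two exceptional sets together occupy a relative measure $O(\delta)+o(1)$. Letting $\delta\to 0$ gives $1\leq 0$, the sought contradiction; hence $u$ must be constant.

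The main obstacle is to keep the two denominators $M-m$ and $m^{-\alpha}-\mu^{-\alpha}$ bounded away from zero independently of $\delta$; this is exactly where the hypotheses that $u$ is genuinely non-constant and that $u\geq \varepsilon>0$ enter (the latter keeps $m^{-\alpha}$ finite). A secondary subtlety is that neither $M$ nor $m$ need be attained on $\R^n$, and indeed the strong maximum principle applied to the subharmonic functions $u$ and $u^{-\alpha}$ forbids attainment as soon as $u$ is non-constant; one must therefore work with near-extremal points $x_\delta,y_\delta$ whose mutual distance is not a priori controlled, and then take $R$ much larger than that distance to force the two balls to overlap on almost their entire volume.
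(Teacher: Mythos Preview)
Your argument is correct and is in fact cleaner than the paper's. The paper proceeds differently: it works with spherical averages, defines $M^{-\alpha}$ as the limit of $\overline{u^{-\alpha}}(R)$ as $R\to\infty$ (which exists by monotonicity), observes that subharmonicity of $u^{-\alpha}$ gives $M\le u(x)$ for every $x$, and then uses the Poisson kernel bound $u(z)\le \frac{(\tilde R+R)\tilde R^{\,n-2}}{(\tilde R-R)^{n-1}}\,\overline{u}_x(\tilde R)$ together with an inductive argument over dyadic shells $\partial B_x(2^iR)$ to force the opposite inequality $u(x)\le M$. Your route bypasses the Poisson kernel and the dyadic iteration entirely: you exploit the submean-value property for $u$ and for $u^{-\alpha}$ symmetrically via two Chebyshev level-set estimates, and close with a single overlap argument. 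The only thing to tidy is the order of limits, which you handle implicitly: for each fixed $\delta$ the two Chebyshev bounds hold for \emph{every} $R$, so you may send $R\to\infty$ first to make the overlap fill the ball, obtaining $1\le \frac{2\delta}{M-m}+\frac{m^{-\alpha}-(m+\delta)^{-\alpha}}{m^{-\alpha}-\mu^{-\alpha}}$, and then let $\delta\to 0$. What your approach buys is brevity and the avoidance of any representation formula; what the paper's approach buys is an explicit identification of the constant value as the limiting spherical average, though this is not needed for the applications in the paper.
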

\begin{proof}[Proof of Lemma \ref{lem: A1}:]
Let us denote
$$\bar{u}_{x}(R):=\frac{1}{\omega_{n-1}R^{n-1}}\int_{\partial B_{x}(R)}u(y)\,dy$$
the average of a smooth function $u$ over the sphere $\partial B_{x}(R)$. We will sometimes use the simplified notation $\bar{u}(R)$. Recall that, given any subharmonic function $u$, $x\in\R^n$ and for any two radii $R\leq\tilde{R}$, then
\begin{equation}\label{decreasing averages}
\bar{u}_{x}(R)\leq \bar{u}_x(\tilde{R}).
\end{equation}
This follows from
$$r^{n-1}\bar{u}'(r)=\frac{1}{\omega_{n-1}}\int_{\partial B_x{(r)}}\partial_\nu u(y)\,dy=-\frac{1}{\omega_{n-1}}\int_{B_x{(r)}}\Delta u(y)\,dy\geq 0$$
where $r>0$ and $\nu$ is the exterior normal.
\par Note that $u^{-\alpha}\leq \varepsilon^{-\alpha}$ implies that the average of $u^{-\alpha}$ on arbitrary subsets is uniformly bounded. Let us fix $x\in\R^n$. Since $u^{-\alpha}$ is bounded, there exists a constant $M>0$ and a sequence of radii $R_i\to\infty$ as $i\to\infty$ such that
\begin{equation}\label{def M}
M^{-\alpha}:=\lim_{i\to\infty}\overline{u^{-\alpha}}_{x}(R_i).
\end{equation}
In fact, because the averages are decreasing (\ref{decreasing averages}), any sequence $R\to\infty$ around any point in $\R^n$ leads to the same limit $M$, since one may always find a subsequence of $R_i$ such that $B_x(R_i)$ includes the new sequence.  
\par As $u^{-\alpha}$ is subharmonic,
$$u^{-\alpha}(x)\leq \overline{u^{-\alpha}}_x(R)$$
and therefore $u^{-\alpha}(x)\leq M^{-\alpha}$, or equivalently 
\begin{equation}\label{M smaller than u x}
M\leq u(x).
\end{equation}
\par For $z\in \R^n$, let $R:=|z-x|$ and $\tilde{R}>R$. By Green's representation theorem, we get
\begin{equation}\label{est: from the annex}
\begin{array}{r l}
\displaystyle u(z) &\displaystyle \leq \int_{\partial B_x(\tilde{R})}u(y)\frac{\tilde{R}^2-R^2}{\omega_{n-1}\tilde{R}|z-y|^n}\, dy\\ \\
&\displaystyle\leq \frac{(\tilde{R}+R)\tilde{R}^{n-2}}{(\tilde{R}-R)^{n-1}}\overline{u}_x(\tilde{R}).
\end{array}
\end{equation}
For $\delta>0,$ we denote
$$\Omega_{\delta,R}:=\{z\in\partial B_x(R), u(z)\geq M+\delta\}$$
a subset of $\partial B_x(R)$ and let
$$\theta_{\delta, R}:=\frac{|\Omega_{\delta, R}|}{|\partial B_x(R)|}\in[0,1]$$
be the corresponding relative size of its volume. Note that $\theta_{\delta,R}\to 0$ as $R\to\infty$. Otherwise, if there exists $\varepsilon\in(0,1]$ such that
$$\limsup_{R\to\infty}\frac{|\{z\in\partial B_x(R), u(z)\geq M+\delta\}|}{|\partial B_x(R)|}=\varepsilon$$
then
$$\limsup_{R\to\infty}\overline{u^{-\alpha}}_x(R) \leq \varepsilon(M+\delta)^{-\alpha}+(1-\varepsilon)M^{-\alpha}<M^{-\alpha}$$
which contradicts our definition (\ref{def M}) of $M$.
\par By choosing $R$ large, $\theta_{\delta,R}\leq \delta$. Let
$$\lambda_{\delta,i}:=\bar{u}_x(2^iR)$$
Note that, by (\ref{est: from the annex}), $\lambda_{\delta,i}\leq 3\times 2^{n-2}\lambda_{\delta, i+1}.$ Since
$$u(x)\leq\lambda_{\delta,i}\leq(M+\delta)(1-\theta_{\delta,2^iR})+\lambda_{\delta,i+1}\times\theta_{\delta,2^iR}$$
then, by induction,
$$u(x)\leq (M+\delta)\frac{1-\delta^l}{1-\delta}+\lambda_l\delta^l$$
for all $l\in \N$. As we take $l\to\infty$,
$$u(x)\leq(M+\delta)\frac{1}{1-\delta}$$
for any $\delta>0$, and therefore $u(x)\leq M.$ By (\ref{M smaller than u x}), $u(x)\equiv M.$
\par We may apply the same argument to any other $\tilde{x}\in\R^n$ and obtain the same value $u(\tilde{x})=M$. Indeed, assuming that 
$$\tilde{M}^{-\alpha}:=\lim_{\tilde{R}\to\infty}\overline{u^{-\alpha}}_{\tilde{x}}(\tilde{R})$$
so that $\tilde{M}^{-\alpha}\geq M^{-\alpha}$, then for $\tilde{R}$ large, $\overline{u^{-\alpha}}_{\tilde{x}}(\tilde{R})\geq M^{-\alpha}$. But, at the same time, given any fixed $\tilde{R}$, then for $R$ sufficiently large, by (\ref{est: from the annex}), $\overline{u^{-\alpha}}_{\tilde{x}}(\tilde{R})\leq \overline{u^{-\alpha}}_{x}(R)$. Thus we obtain that $u\equiv M$ in $\R^n$.
\end{proof}

\end{document}